\subjclass[2010]{Primary 53C30; Secondary 53C40}
 \thanks{ A. J. Di Scala is member of CrypTO, GNSAGA of INdAM and of DISMA Dipartimento di Eccellenza MIUR 2018-2022; C. E. Olmos was supported by Famaf-UNC and CIEM-Conicet; F. Vittone was supported by UNR and Conicet.}
\numberwithin{equation}{section}
\def \R{\mathbb{R}}
\newtheorem{theorem}{Theorem}[section]
\newtheorem{cor}[theorem]{Corollary}
\newtheorem{lemma}[theorem]{Lemma}
\newtheorem{rem}[theorem]{Remark}
\newtheorem{remark}[theorem]{Remark}
\newtheorem{prop}[theorem]{Proposition}
\newtheorem{thm}[theorem]{Theorem}
\def\red{\color[rgb]{1,0,0}}
\definecolor{red}{rgb}{1,0,0}
\def\R{\mathbb{R}}
\begin{document}

\title[The structure of homogeneous Riemannian manifolds with nullity]
{The structure of homogeneous Riemannian manifolds with nullity}

\author[A. J. Di Scala]{Antonio J. Di Scala}
\author[C. E. Olmos]{Carlos E. Olmos}
\author[F. Vittone]{Francisco Vittone}


\date{\today}
\begin{abstract}
	We find new conditions that the existence of nullity of the curvature tensor of an irreducible homogeneous space $M=G/H$ imposes on the Lie algebra $\mathfrak g$ of $G$ and on the Lie algebra $\tilde{\mathfrak g}$ of the full isometry group of $M$.  Namely, we prove that there exists a transvection of $M$ in the direction of any element of the nullity, possibly by enlarging the presentation group $G$. Moreover, we prove that these transvections generate an abelian ideal of $\tilde{\mathfrak g}$. These results constitute a substantial improvement on the structure theory developed in \cite{DOV}. In addition we construct examples of homogeneous Riemannian spaces with non-trivial nullity, where $G$ is a non-solvable group, answering a natural open question. Such examples admit (locally homogeneous) compact quotients.
    In the case of co-nullity $3$ we give an explicit description of the isometry group of any homogeneous
    locally irreducible Riemannian manifold with nullity.
\end{abstract}

\maketitle

\section {Introduction} \label {ii}
 In a previous paper \cite{DOV} we developed a general theory for the structure of irreducible homogeneous spaces $M=G/H$ in relation to the nullity distribution $\nu$ associated to the curvature tensor $R$.
The nullity subspace of $M$ at $p$ is defined as
 $$\nu_p=\{v\in T_pM\,:\,R_{\,\cdot\,,\,\cdot }v=0\}.$$
 Since $M$ is homogeneous, all these subspaces have the same dimension, called the \textsl{index of nullity} of $M$. The assignment $p\mapsto \nu_p$ defines an  autoparallel distribution, with flat totally geodesic integral manifolds, called the \textit{nullity distribution} or simply the \textsl{nullity} of $M$. The manifold  $M$ is said to have \textsl{trivial nullity} if either $\nu=\{0\}$ or $\nu$ is the tangent space of a local de Rham flat factor of $M$ (i.e., $\nu$ is a parallel distribution with respect to the Levi-Civita connection of $M$).

 The theory developed in \cite{DOV} allowed us to prove some interesting results. On the one hand, we showed that if $M=G/H$ is a simply connected homogeneous Riemannian manifold  without Euclidean de Rham factor such that the Lie algebra $\mathfrak g$ of $G$ is reductive (in particular, if $M$ is compact) or $\mathfrak g$ is $2$-step nilpotent, then $\nu$ must be trivial (cf. \cite[Proposition C]{DOV}).
On the other hand, we were able to construct the first known examples of irreducible homogeneous spaces with non-trivial nullity. All these examples are solvmanifolds which do not admit compact quotients. Moreover, we showed in part (4) of \cite [Theorem A]{DOV}  that if the \textsl{co-nullity} (i.e. the co-dimension of the nullity $\nu$) is $k=3$, then the presentation group $G$ must be solvable.  However, as we pointed out in the Introduction of \cite{DOV}, we did not know of any example of an irreducible simply connected homogeneous Riemannian manifold $M=G/H$ with positive index of nullity such that either $H$ is non-trivial or $G$ is non-solvable.
In this paper we answer the  second question by constructing examples of Riemannian homogeneous spaces with non-trivial nullity and  non-solvable presentation group. Such examples admit compact (locally homogeneous) quotients (see Section \ref{nonsolvable}).

The main goal of the paper is to revisit the structure theory developed in \cite{DOV} and substantially improve it. Let $M = G/K$ be a locally irreducible homogeneous Riemannian manifold with non-trivial nullity $\nu$. One can consider some natural distributions associated to $\nu$. Namely,
	$\nu ^{(1)}, \nu ^{(2)}$, $\hat \nu$ and $\mathcal U$.
	
The distributions $\nu^{(1)}$ and $\nu^{(2)}$ are called the \textsl{osculating distributions} of order $1$ and $2$ of $\nu$, respectively. If one puts $\nu^{(0)}=\nu$, then $\nu^{(i+1)}$ is obtained by adding to $\nu^{(i)}$ the covariant derivative, in any direction, of fields that lie in $\nu^{(i)}$ ($i=0,1$). The distribution  $\nu^{(1)}$ can, in turn, be decomposed as $$\nu^{(1)}=\nu+\hat{\nu}$$
where $\hat{\nu}_p$ is the linear span of $\{\nabla_{\nu_p}Z\}$ with $Z\in \mathcal{K}^G(M)\simeq \mathfrak g$, the space of Killing fields of $M$ whose flow belong to $G$ (see equation \ref{nupic}). The distribution $\hat{\nu}$ is called the \textsl{adapted distribution} of $\nu$.

Finally, the distribution $\mathcal{U}$, called the \textsl{bounded distribution}. Namely, $\mathcal{U}_p$ consists of the directions of the Killing fields in $\mathcal{K}^G(M)$ whose covariant derivatives   along the  leaf of nullity  $N(p)$   lie in  $\nu$ (see equation(\ref{bounded})).

All these distributions are $G$-invariant
 and  verify the following inclusions  (see \cite [Theorem A (1)] {DOV}):
$$\{0\} \subsetneq \nu \subsetneq \nu ^{(1)}\subsetneq \nu ^{(2)}\subset \mathcal U
\subsetneq TM.$$
This imposes a first important restriction: if $M$ has non-trivial nullity, then the co-nullity must be at least $3$.

Another important consequence of a non-trivial nullity  is the existence of the so-called \textsl{adapted transvections}. These are transvections induced by $G$ (i.e. elements of $\mathcal{K}^G(M)$ such that $(\nabla X)_p=0$) whose initial condition lie in $\hat{\nu}$. In \cite [Theorem A (2)]{DOV} it was proven that for each $v\in \hat{\nu}_p$ there exist an adapted transvection $Y$ with $Y_p=v$ such that the Jacobi operator $R_{\,\cdot\,,\,v}v= 0$,  $[Y,[Y,\mathcal{K}(M)]]=0$ and $Y$ does not belong to the center of $\mathcal{K}^G(M)$ (where $\mathcal{K}(M)$ denotes the space of Killing fields of $M$). The existence of  an adapted transvection $Y$ of order two, i.e. $ad_Y^2=0$.  plays a crucial role in the construction of the examples of \cite [Section9]{DOV}).

In this paper we further examine the restrictions that the existence  of nullity imposes on the Lie algebra $\mathcal{K}(M)$. In particular, we prove that there exists a transvection, that might not belong to $\mathcal K^G(M)$,  in the direction of any element of $\nu$. Moreover, we prove that these transvections generate an abelian ideal $\mathfrak a$ of $\mathcal K(M)$. Our main structure result is the following:

\begin {thm} \label{dakl} Let $M=G/H$ be a simply connected  irreducible homogeneous Riemannian manifold with a non-trivial nullity distribution $\nu$, let $\mathfrak g =\text{Lie}(G)\simeq \mathcal K^G (M)$ and $\tilde{\mathfrak g}= \text {Lie}(I(M))\simeq
\mathcal K(M)$. Then
\begin {itemize}

\item [1)] For any $p\in M$, $v\in \nu _p$ there exists a transvection with initial condition $v$ (possibly, not in $\mathfrak g$).

\item [2)] Let $\tilde{\mathfrak {tr}}^p_0\subset \tilde{\mathfrak g}$ be the vector space of transvections at $p$ with initial condition in $\nu _p$. Then
$\mathfrak g' : = \tilde{\mathfrak {tr}}^p_0+ \mathfrak g$ is a Lie subalgebra of $\tilde {\mathfrak g}$ which does not depend on $p\in M$. Moreover, $\mathfrak g$
is an ideal of $\mathfrak g'$.

\item [3)] The set $\tilde{\mathfrak {tr}}^p_0$ generates an abelian ideal $\mathfrak a$ in $\tilde{\mathfrak g}$. Moreover, $\mathfrak a$  does not depend on $p\in M$ and coincides with the ideal  generated by $\tilde{\mathfrak {tr}}^p_0$ in $\mathfrak g'$.

\item [4)] The integrable distribution $\mathcal D$ of $M$, given by  $\mathcal D_q= \mathfrak a.q$,   contains $\nu ^{(2)}$. Moreover,
$\nu$ and $\nu ^{(1)}$ are parallel along any orbit
$\bar A\cdot q$, where $\bar A$ is the closure in $I(M)$  of the Lie group associated to $\mathfrak a$.
\end{itemize}

\end{thm}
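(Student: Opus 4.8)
\smallskip
My plan is to prove (1) by a geometric ``sliding along the nullity'' construction, and then to deduce (2)--(4) from a short chain of $1$-jet computations at a single point, exploiting three things over and over: a Killing field is determined by its $1$-jet at a point; if $v\in\nu_p$ then $R_{\,\cdot\,,\,\cdot}\,v=0$ and $R_{v,\,\cdot}\,\cdot=0$; and, by \cite[Theorem A]{DOV}, every $w\in\hat\nu_p$ is the initial condition of an adapted transvection $S\in\mathfrak g$, which moreover satisfies $\mathrm{ad}_S^{\,2}=0$ on $\tilde{\mathfrak g}$.

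\textbf{(1)} For $v\in\nu_p$ I would use $\nu\subset\mathcal U$ to pick $Z\in\mathcal K^G(M)$ with $Z_p=v$ whose covariant derivative along the leaf $N(p)$ stays in $\nu$; recall $N(p)$ is a complete, flat, totally geodesic submanifold. The point is to show that $R$ is parallel along the leaves of $\nu$, so that parallel transport $\tau_s$ along the nullity geodesic $\gamma(s)=\exp_p(sv)$ preserves the curvature tensor, and then to upgrade the resulting local ``nullity translations'' to a one-parameter group $\Phi_s\subset I(M)$ with $\Phi_s(p)=\gamma(s)$ and $\mathrm d(\Phi_s)_p=\tau_s$ — here $M$ being simply connected, complete and (homogeneous, hence) real-analytic, together with the fact that $G$ already carries $p$ along $\gamma$, feeds a Cartan--Ambrose--Hicks type argument. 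Its generator $Y\in\tilde{\mathfrak g}$ then has $Y_p=v$, $(\nabla Y)_p=0$ and is parallel along $\gamma$; $Y$ need not lie in $\mathfrak g$, which is exactly why the presentation group must be enlarged. \emph{I expect this to be the main obstacle}: controlling $R$ along the nullity and globalizing the local sliding to a genuine isometry of $M$.

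\textbf{(2)} Fix $Y\in\tilde{\mathfrak{tr}}^p_0$ with $v:=Y_p\in\nu_p$ and $Z\in\mathfrak g$. Writing $[Y,Z]=(\nabla Z)(Y)-(\nabla Y)(Z)$, one gets $[Y,Z]_p=\nabla_vZ\in\hat\nu_p$, and, using the Killing identity $(\nabla_X\nabla W)(U)=R(X,W)U$ together with $(\nabla Y)_p=0$ and $R_{\,\cdot\,,\,\cdot}\,v=R_{v,\,\cdot}\,\cdot=0$, also $(\nabla[Y,Z])_p=0$. Thus $[Y,Z]$ has the same $1$-jet at $p$ as the adapted transvection $S\in\mathfrak g$ with $S_p=\nabla_vZ$, so $[Y,Z]=S\in\mathfrak g$; hence $[\tilde{\mathfrak{tr}}^p_0,\mathfrak g]\subset\mathfrak g$ (and in fact lands in the adapted transvections). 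The same $1$-jet computation applied to $Y_1,Y_2\in\tilde{\mathfrak{tr}}^p_0$ gives $[Y_1,Y_2]_p=0$ and $(\nabla[Y_1,Y_2])_p=0$ (all curvature terms carry a nullity vector in the appropriate slot), so $[Y_1,Y_2]=0$ and $\tilde{\mathfrak{tr}}^p_0$ is abelian. Therefore $\mathfrak g'=\tilde{\mathfrak{tr}}^p_0+\mathfrak g$ is a subalgebra with $\mathfrak g$ an ideal. For $p$-independence: isometries preserve transvections and $\nu$, so $\mathrm{Ad}_{\exp Z}(\tilde{\mathfrak{tr}}^p_0)=\tilde{\mathfrak{tr}}^{\exp(Z)(p)}_0$, while $\mathrm{Ad}_{\exp Z}(Y)=Y+[Z,Y]+\tfrac12[Z,[Z,Y]]+\cdots\in Y+\mathfrak g$ by the inclusion just proved; transitivity of $G$ then gives $\tilde{\mathfrak{tr}}^q_0\subset\mathfrak g'$ for all $q$, whence $\mathfrak g'$ does not depend on $p$.

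\textbf{(3)} Set $\mathfrak s:=[\mathfrak g,\tilde{\mathfrak{tr}}^p_0]$; by the Jacobi identity and (2) it is a $\mathfrak g$-submodule of adapted transvections at $p$ with initial condition in $\hat\nu_p$. A $1$-jet computation as in (2) gives $[\tilde{\mathfrak{tr}}^p_0,\mathfrak s]=0$; and $[\mathfrak s,\mathfrak s]=0$ because for adapted transvections $S_1,S_2$ the field $S_1+S_2$ is again adapted, so $\mathrm{ad}_{S_1}^{\,2}=\mathrm{ad}_{S_2}^{\,2}=0$ forces $\mathrm{ad}_{S_1}\mathrm{ad}_{S_2}+\mathrm{ad}_{S_2}\mathrm{ad}_{S_1}=0$, hence $\mathrm{ad}_{[S_1,S_2]}^{\,2}=0$; but $[S_1,S_2]_p=0$, and a Killing field $W$ with $W_p=0$ and $\mathrm{ad}_W^{\,2}=0$ has $(\nabla W)_p^{\,2}=0$ with $(\nabla W)_p$ skew, so $(\nabla W)_p=0$ and $W=0$. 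Hence $\mathfrak a':=\tilde{\mathfrak{tr}}^p_0+\mathfrak s$ is an abelian ideal of $\mathfrak g'$, and it is precisely the ideal generated by $\tilde{\mathfrak{tr}}^p_0$ in $\mathfrak g'$. Using $\mathrm{Ad}_{\exp Z}(\tilde{\mathfrak{tr}}^p_0)=\tilde{\mathfrak{tr}}^{\exp(Z)(p)}_0$ and transitivity, $\mathfrak a'=\operatorname{span}\bigcup_q\tilde{\mathfrak{tr}}^q_0$; this subspace is finite-dimensional (hence closed) and $\mathrm{Ad}(I(M))$-invariant, so $[X,Y]=\tfrac{d}{dt}\big|_0\mathrm{Ad}_{\exp(-tX)}Y\in\mathfrak a'$ for every $X\in\tilde{\mathfrak g}$. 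Thus $\mathfrak a:=\mathfrak a'$ is an abelian ideal of $\tilde{\mathfrak g}$, manifestly $p$-independent, coinciding with the ideal generated by $\tilde{\mathfrak{tr}}^p_0$ both in $\tilde{\mathfrak g}$ and in $\mathfrak g'$.

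\textbf{(4)} By (1), $\nu\subset\mathcal D$. For $Z\in\mathfrak g$ and $Y\in\tilde{\mathfrak{tr}}^q_0$ one has $[Z,Y]\in\mathfrak a$ with $[Z,Y]_q=-\nabla_{Y_q}Z$, so $\hat\nu_q\subset\mathcal D_q$ and $\nu^{(1)}\subset\mathcal D$; iterating, $[Z_1,[Z_2,Y]]\in\mathfrak a$ with $[Z_1,[Z_2,Y]]_q=\nabla_{\nabla_{Y_q}Z_2}Z_1$, so $\nabla_{\hat\nu_q}\mathfrak g\subset\mathcal D_q$, which together with the description of $\nu^{(2)}$ via covariant derivatives of sections of $\nu^{(1)}$ yields $\nu^{(2)}\subset\mathcal D$. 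For the parallelism, the crucial point — combining that $\mathfrak a$ is abelian with (1) — is that $\nabla_\zeta Y=0$ for all $Y\in\mathfrak a$ and all $\zeta\in\nu^{(1)}_r$: if $Y'\in\mathfrak a$ is a transvection at $r$ with $Y'_r=\zeta$ (such $Y'$ exist for $\zeta\in\nu_r$ by (1), and for $\zeta\in\hat\nu_r$ as combinations of the fields $[Z,Y'']$ above), then $0=[Y,Y']$ forces $\nabla_\zeta Y=\nabla_{Y'_r}Y=\nabla_{Y_r}Y'=(\nabla Y')_r(Y_r)=0$. Since the flow of any $X\in\mathfrak a\subset\tilde{\mathfrak g}$ preserves the $I(M)$-invariant distributions $\nu$ and $\nu^{(1)}$, for a section $\zeta$ of $\nu$ (resp. of $\nu^{(1)}$) we get $\nabla_X\zeta=\nabla_\zeta X+[X,\zeta]=L_X\zeta\in\nu$ (resp. $\in\nu^{(1)}$); that is, $\nu$ and $\nu^{(1)}$ are parallel along the orbits of the group of $\mathfrak a$. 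Finally $\bar A$ has Lie algebra $\bar{\mathfrak a}$, an abelian ideal of $\tilde{\mathfrak g}$ containing the same transvections, so the identical argument gives parallelism of $\nu$ and $\nu^{(1)}$ along $\bar A\cdot q$, completing (4).
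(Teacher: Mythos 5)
Your part (2) and the first half of part (4) coincide with the paper's argument (Remark \ref{rem33} and the computation $\hat{\hat{\nu}}_p\subset[\mathfrak g,[\mathfrak g,\tilde{\mathfrak{tr}}_0^p]].p$), but the two load‑bearing steps are precisely the ones that are left open or go wrong. For (1), you correctly isolate the need to show that $R$ is parallel along the nullity, but you do not prove it (in the paper this is Lemma \ref{util}, established by a boundedness/recurrence argument along homogeneous nullity geodesics using $\tau_t=\mathrm d\phi_t\circ \mathrm e^{-tB}$). More seriously, even granting $\nabla_{\nu}R=0$, your Cartan--Ambrose--Hicks globalization does not go through as stated: a linear isometry $\tau_s:T_pM\to T_{\gamma(s)}M$ that preserves $R$ alone need not extend to an isometry of a homogeneous space; the real‑analytic CAH theorem requires $\tau_s$ to intertwine \emph{all} covariant derivatives $\nabla^kR$, and nothing in your argument controls $\nabla^kR$, $k\ge 1$, along the nullity. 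The paper circumvents this entirely: it shows (Lemma \ref{mainLe}) that $h_t(u,C)=(\mathrm e^{-tB}u,\mathrm e^{-tB}C\mathrm e^{tB})$ is an automorphism of $\mathcal K^G(M)$ preserving the isotropy (here is where $\mathrm e^{-tB}R_p=R_p$ and the recurrence of $\mathrm e^{tB}$ enter), integrates it to automorphisms $L_t$ of $G$ and hence to maps $\ell_t$ of $G/H$ fixing $p$, and verifies that $\mathrm d_{gp}\ell_t$ is a linear isometry at \emph{every} point via $\ell_t=m_{L_t(g)}\circ\ell_t\circ m_{g^{-1}}$; then $Y=(0,-B)$ is Killing and $X+Y$ is the transvection. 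Your sketch has no substitute for this global step.

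In (3) the assertion that $\mathfrak s:=[\mathfrak g,\tilde{\mathfrak{tr}}_0^p]$ is a $\mathfrak g$-submodule, hence that $\mathfrak a'=\tilde{\mathfrak{tr}}_0^p+\mathfrak s$ is the ideal generated by $\tilde{\mathfrak{tr}}_0^p$ in $\mathfrak g'$, is false. Every element of $\mathfrak a'$ has value at $p$ in $\nu^{(1)}_p$, whereas for a transvection $S\in\mathfrak s$ with $S_p=w\in\hat\nu_p$ and $Z\in\mathfrak g$ one has $[S,Z]_p=\nabla_wZ$, and these values span $\hat{\hat{\nu}}_p$; since $\nu^{(2)}=\nu^{(1)}+\hat{\hat{\nu}}$ properly contains $\nu^{(1)}$ (Lemma \ref{propsosculating}), $[\mathfrak g,\mathfrak s]\not\subset\mathfrak a'$. (The Jacobi identity only yields the tautology $[\mathfrak g,\mathfrak s]\subset[[\mathfrak g,\mathfrak g],\tilde{\mathfrak{tr}}_0^p]+[\mathfrak g,\mathfrak s]$, and note that $[S,Z]$ is not even a transvection at $p$, since $R_{w,Z_p}$ need not vanish for $w\in\hat\nu_p$.) Part (4) itself, $\nu^{(2)}\subset\mathfrak a.p$, already forces the generated ideal to be strictly larger than your $\mathfrak a'$. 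Consequently your abelianness check, which only treats brackets among $\tilde{\mathfrak{tr}}_0^p$ and $\mathfrak s$, does not cover the ideal actually generated, which involves iterated brackets $[\mathfrak g',\tilde{\mathfrak{tr}}_0^p]^k$ of all orders; the paper instead shows that $\tilde{\mathfrak{tr}}_0^p$ lies in the center of the ideal $\mathfrak u_{00}$ (Theorem \ref{idealmain}) and uses that the center of an ideal is an abelian ideal, and compares the ideals generated in $\mathfrak g'$ and $\tilde{\mathfrak g}$ by the induction of Lemma \ref{lem:notdep}. Your observation that $\mathrm{span}\bigcup_q\tilde{\mathfrak{tr}}_0^q$ is $\mathrm{Ad}(I(M))$-invariant is a clean route to $p$- and $G$-independence, but it cannot substitute for the abelianness proof, since $[\tilde{\mathfrak{tr}}_0^q,\tilde{\mathfrak{tr}}_0^{q'}]=0$ for $q\neq q'$ is not a one‑jet triviality.
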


Part (2) and (3) of the above theorem explains why the adapted transvections  of \cite[ Theorem A (2)]{DOV} have order two. In fact, any adapted transvection can be obtained as the Lie bracket of a transvection in $\tilde{\mathfrak{tr}^p_0}$ and a Killing field in $\mathcal K^G(M)$ (see Remark \ref{rem33}). Hence the subspace of adapted transvections is contained in the abelian ideal $\mathfrak a$.

Moreover, since the adapted transvections cannot lie in the center of $\mathcal{K}^G(M)$, the existence of the abelian ideal $\mathfrak a$ implies that the Lie algebra $\mathfrak g$ of $G$ cannot be reductive. This gives a conceptual much simpler proof of \cite[Proposition C]{DOV}.

The results of Theorem \ref{dakl} also allow us to improve part (4) of Theorem A of \cite{DOV}. In fact, we obtain the following explicit description of the Lie group $G$ in case of co-nullity $k=3$.
\begin{cor}\label{cor:A}
Let $M^n=G/H$ be a simply connected  irreducible homogeneous Riemannian manifold of co-nullity $3$, where $G$ is connected ($n>3$). Then $H$ is trivial and  $G= \mathbb R^{n-1}\rtimes \mathbb R$.
\end{cor}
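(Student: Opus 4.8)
The idea is to combine Theorem \ref{dakl} with the inclusions $\nu\subsetneq\nu^{(1)}\subsetneq\nu^{(2)}\subset\mathcal U\subsetneq TM$ recalled in the Introduction. If the co-nullity is $k=3$ and $n>3$, then $\dim\nu=n-3$, and the chain of strict inclusions forces $\dim\nu^{(1)}=n-2$, $\dim\nu^{(2)}=n-1$ and $\nu^{(2)}=\mathcal U$; in particular the co-nullity being exactly $3$ is the extremal case, everything is pinned down dimension by dimension. By part (4) of Theorem \ref{dakl}, the distribution $\mathcal D_q=\mathfrak a\cdot q$ contains $\nu^{(2)}$, so $\dim\mathcal D\ge n-1$; since $\mathfrak a$ is an abelian ideal of $\tilde{\mathfrak g}$ and $M$ is irreducible (so $\mathcal D$ cannot be all of $TM$, as its orbits would be flat parallel factors), we get $\dim\mathcal D=n-1$, i.e. $\mathcal D=\nu^{(2)}$, and the orbits of the abelian group $\bar A$ are hypersurfaces.

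First I would show that $H$ is trivial. The isotropy $H$ acts on $T_pM$ preserving $\nu_p$, $\nu^{(1)}_p$, $\nu^{(2)}_p$ and hence the $\bar A$-orbit direction $\mathcal D_p$; so $H$ preserves the orthogonal decomposition $T_pM=\nu_p\oplus(\hat\nu_p)\oplus(\text{line})\oplus(\text{line})$ coming from the filtration, where the two lines are $\nu^{(1)}_p\ominus\nu_p$-complement pieces. A standard fact (used already in \cite{DOV}) is that the isotropy of a homogeneous space with nontrivial nullity acts trivially on $\nu$, because elements of $\nu$ are initial conditions of transvections and the slice representation is constrained by the flat totally geodesic leaves; combined with the fact that $\hat\nu/\nu$ and the remaining one-dimensional pieces are too small to carry a nontrivial orthogonal action compatible with the bracket relations, one concludes that the linear isotropy is trivial, hence $H=\{e\}$ since $M$ is simply connected and $G$ connected.

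Next, the structure of $G$. By Theorem \ref{dakl}(3), $\tilde{\mathfrak{tr}}^p_0$ generates the abelian ideal $\mathfrak a$ of $\tilde{\mathfrak g}$ and also of $\mathfrak g'=\tilde{\mathfrak{tr}}^p_0+\mathfrak g$. Since $\dim\mathcal D=n-1$ and $\mathfrak a$ acts with trivial isotropy (it is an abelian ideal acting on the simply connected $M$ whose orbits have dimension $n-1$ with flat leaves of nullity inside), $\dim\mathfrak a=n-1$ and the corresponding group $A\cong\mathbb R^{n-1}$. Now $\mathfrak g$ is an $(n)$-dimensional subalgebra of $\tilde{\mathfrak g}$ acting simply transitively on $M$ (as $H$ is trivial), and it contains the adapted transvections, which by Remark \ref{rem33} lie in $\mathfrak a$; more precisely $\mathfrak g\cap\mathfrak a$ is an ideal of $\mathfrak g$ of codimension $1$ (it is the stabiliser-type subspace projecting onto the $\bar A$-orbit), isomorphic to $\mathbb R^{n-1}$. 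Choosing any $X\in\mathfrak g\setminus\mathfrak a$ gives $\mathfrak g=(\mathfrak g\cap\mathfrak a)\rtimes\mathbb R X$; integrating, $G=\mathbb R^{n-1}\rtimes\mathbb R$. The nontriviality of the semidirect product (i.e. that the $\mathbb R$-action on $\mathbb R^{n-1}$ is nonzero) is automatic, since otherwise $G$ would be abelian and $M$ flat, contradicting nontrivial (irreducible) nullity; in fact one already knew from \cite[Theorem A(4)]{DOV} that $G$ is solvable, so the only remaining content is the explicit splitting, which the abelian ideal $\mathfrak a$ now hands us for free.

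The main obstacle I expect is the isotropy computation: ruling out a nontrivial linear isotropy $H$ acting on the small complementary pieces $\nu^{(1)}\ominus\nu$ and $\nu^{(2)}\ominus\nu^{(1)}$. One has to use that $H$ must commute with the full filtration structure and with the bracket relations defining $\hat\nu$ (via $\hat\nu_p=\mathrm{span}\{\nabla_{\nu_p}Z:Z\in\mathfrak g\}$), and that an orthogonal transformation fixing $\nu_p$ pointwise and preserving each step of a length-$3$ filtration with all quotients of dimension $\le 2$ is forced to be the identity; this is where the hypothesis $k=3$ is essential, and it should follow by pairing the Koszul/Jacobi identities with the known action of $H$ on transvections, exactly as in the co-nullity analysis of \cite[Section 8]{DOV}, now streamlined by Theorem \ref{dakl}.
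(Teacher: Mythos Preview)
Your overall strategy---count dimensions along $\nu\subsetneq\nu^{(1)}\subsetneq\nu^{(2)}\subsetneq TM$, use Theorem \ref{dakl}(4) to see that $\mathfrak a.p$ is the codimension-one subspace $\nu^{(2)}_p$, and then split $\mathfrak g$ as an abelian codimension-one ideal plus a line---is exactly the paper's. But two steps in your execution are not justified, and one of them is a genuine gap.

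\medskip
\textbf{The isotropy.} You try to argue directly that $H$ acts trivially on the filtration $\nu\subset\nu^{(1)}\subset\nu^{(2)}\subset TM$. This sketch is incomplete (nothing you wrote forces the action on the one-dimensional quotients to be trivial), and in fact it is unnecessary: the paper simply invokes \cite[Theorem~A\,(3)]{DOV}, which already says that in co-nullity $3$ the isotropy of \emph{any} presentation group is trivial. You should cite that result rather than redo it.

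\medskip
\textbf{The real gap: why is $\mathfrak a$ contained in $\mathfrak g$?} You assert that $\mathfrak g\cap\mathfrak a$ is an ideal of $\mathfrak g$ of codimension $1$, justifying this only with the vague phrase ``stabiliser-type subspace projecting onto the $\bar A$-orbit''. This does not follow from what you have. A priori $\mathfrak a$ is an ideal of $\mathfrak g'=\mathfrak g+\tilde{\mathfrak{tr}}^p_0$, not of $\mathfrak g$; since $\mathfrak g'=\mathfrak g+\mathfrak a$ one has
\[
\dim(\mathfrak g\cap\mathfrak a)=\dim\mathfrak g+\dim\mathfrak a-\dim\mathfrak g'=(n)+(n-1)-\dim\mathfrak g'=n-1-\dim(\mathfrak g'_p),
\]
so you need the isotropy of $\mathfrak g'$ to vanish as well. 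The paper closes this gap cleanly: because \cite[Theorem~A\,(3)]{DOV} applies to \emph{every} presentation group, one has trivial isotropy for $G'$ and for $I(M)^o$ too, hence $\mathfrak g=\mathfrak g'=\tilde{\mathfrak g}$. Then $\mathfrak a\subset\mathfrak g$ outright, and picking any $w\in\mathfrak g\setminus\mathfrak a$ gives $\mathfrak g=\mathfrak a\rtimes\mathbb R w$ directly. Without this observation your codimension-one claim is unproved.

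\medskip
Finally, the passage from the Lie algebra splitting to $G\cong\mathbb R^{n-1}\rtimes\mathbb R$ requires a short covering argument (showing the normal subgroup $A$ and the one-parameter subgroup $L$ are both simply connected); you omit this, but it is routine once the algebra is in place.
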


Observe that the projection from $M$ to the quotient $\bar A\backslash M$ by the orbits of $\bar A$ is a $G$-invariant Riemannian submersion with intrinsically flat fibers.

\
Finally, we give an explicit way to construct non-solvable examples. Namely:

\begin {thm}\label{mex2}
Let $K$ be a simply connected compact simple Lie group and let
$\rho : K\to SO_n$ be an irreducible orthogonal representation. Let $\mathbb V_0$ be a non-trivial vector subspace of $\mathbb R^n$ such that $\dim (\mathbb V_0)(1 +\dim (K))< n$. Then there exists a left invariant metric $\langle \, ,\, \rangle$ on $G=\mathbb R^n \rtimes_{\rho} K$
such that $M = (G,\langle \, ,\, \rangle)$ is an irreducible Riemannian manifold and the nullity subspace $\nu_{e}$ contains $\mathbb{V}_0$ (and hence has dimension at least
$\dim (\mathbb V_0)$). In particular, the quotient $N=\mathbb Z^n\backslash M$ of $M$ by the orbits of
$\mathbb Z^n$ is a compact, locally irreducible, non-homogeneous but locally homogeneous Riemannian manifold with a non-trivial nullity distribution.
\end{thm}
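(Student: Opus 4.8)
The plan is to realize the metric directly on the Lie algebra $\mathfrak g=\mathbb R^n\oplus\mathfrak k$ of $G$, with bracket $[\xi,\eta]\in\mathfrak k$, $[\xi,w]=\rho_*(\xi)w$ and $[w,w']=0$ ($\xi,\eta\in\mathfrak k$, $w,w'\in\mathbb R^n$), and to extract the nullity and the irreducibility from the Koszul formula for a left invariant metric. The point of departure is that for the product metric $\langle\,,\,\rangle_0\oplus b$, with $\langle\,,\,\rangle_0$ the given $\rho(K)$-invariant inner product on $\mathbb R^n$ and $b$ an $\mathrm{Ad}(K)$-invariant one on $\mathfrak k$, one gets $\nabla_w w'=0$, $\nabla_\xi w=\rho_*(\xi)w$, $\nabla_w\xi=0$, $\nabla_\xi\eta=\tfrac12[\xi,\eta]$, so $R_{\,\cdot\,,\,\cdot\,}w=0$ for all $w\in\mathbb R^n$; but then $\mathbb R^n$ is parallel, $M$ splits off a flat factor and the nullity is trivial. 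Hence the whole issue is to deform this metric so that it is no longer $\rho(K)$-invariant — which destroys the splitting — while keeping $\mathbb V_0$ in the nullity.

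The inequality $\dim(\mathbb V_0)(1+\dim K)<n$ is precisely what leaves room for this: if $W:=\mathbb V_0+\rho_*(\mathfrak k)\mathbb V_0$ then $\dim W\le\dim(\mathbb V_0)(1+\dim K)<n$, so the $\langle\,,\,\rangle_0$-orthogonal complement $W^{\perp}$ is nonzero. I would take $\langle\,,\,\rangle$ to coincide with $\langle\,,\,\rangle_0\oplus b$ on $W\oplus\mathfrak k$, to keep $W$ orthogonal to $W^{\perp}\oplus\mathfrak k$, and to differ from the product metric otherwise only through a controlled deformation of $W^{\perp}$ together with a $W^{\perp}$–$\mathfrak k$ cross term — in the spirit of the solvmanifold examples of \cite[Section~9]{DOV}, but now driven by the full $K$-action rather than by a single skew derivation. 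Because every bracket of elements of $\mathbb V_0\cup\rho_*(\mathfrak k)\mathbb V_0\cup\mathfrak k$ lands again in $W\oplus\mathfrak k$, where the two metrics agree, the connection terms $\nabla_X v$, $\nabla_v X$ with $v\in\mathbb V_0$, as well as $\nabla_\xi\eta$, should reduce to those of the undeformed model; inserting this into $R(X,Y)v=\nabla_X\nabla_Y v-\nabla_Y\nabla_X v-\nabla_{[X,Y]}v$ should give $R_{\,\cdot\,,\,\cdot\,}v=0$ for all $v\in\mathbb V_0$, i.e. $\mathbb V_0\subseteq\nu_e$; and then the nullity of $M$ is non-trivial as soon as $M$ is irreducible.

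The step I expect to be the main obstacle is the term $R_{\xi,w}v$ with $w\in W^{\perp}$: here $\nabla_w\nabla_\xi v=\nabla_w(\rho_*(\xi)v)$ brings in the second order vectors $\rho_*(\eta)\rho_*(\xi)v$, which in general leave $W$ and hence are seen by the deformation. Making $R_{\xi,w}v$ vanish nonetheless is what fixes the fine form of the metric (and forces the cross term), and is the computational heart of the construction; I would run it by adapting the computations of \cite[Section~9]{DOV}. For irreducibility, a de Rham decomposition $TM=E_1\oplus E_2$ would give at $e$ a proper nonzero subspace of $\mathfrak g$ invariant under the holonomy group, hence under all operators $R(X,Y)$; since $\rho$ is irreducible the only $\rho_*(\mathfrak k)$-invariant subspaces of $\mathbb R^n$ are $\{0\}$ and $\mathbb R^n$, and the explicit form of $R$ (together with the fact that the deformation is chosen so that the nullity is not $\nabla$-parallel in the $\mathfrak k$-directions) should rule out any such factor, flat or not.

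Finally, $\mathbb Z^n\subset\mathbb R^n\subset G$ acts on $M=G$ by left translations, freely and properly discontinuously, with $\mathbb Z^n\backslash G$ diffeomorphic to $T^n\times K$, hence compact; the left $G$-invariant metric descends, so $N=\mathbb Z^n\backslash M$ is a compact Riemannian manifold having $M$ as a Riemannian covering, and therefore locally homogeneous, locally irreducible, and with non-trivial nullity. It cannot be globally homogeneous: otherwise $I(N)$ would be compact (as $N$ is compact), so the universal Riemannian cover of $N$ would split as $\mathbb R^{k}\times Q$ with $Q$ compact homogeneous; but this cover is $M$, which is irreducible and noncompact — a contradiction.
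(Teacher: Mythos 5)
Your outline captures the right framework (a left invariant metric on $G=\mathbb R^n\rtimes_\rho K$ obtained by perturbing the product metric, Koszul-formula computations, and the quotient by $\mathbb Z^n$), and your treatment of the final claims --- compactness and local homogeneity of $N$, and its non-homogeneity --- is sound. But the core of the theorem is not proved: you never write down the metric, and you explicitly defer the two computations on which everything rests. The term $R_{\xi,w}v$ that you single out as ``the main obstacle'' is a genuine one, and it is not a detail to be fixed by ``adapting Section 9 of \cite{DOV}'': with your choice of where to deform (everything outside $W=\mathbb V_0+[\mathfrak k,\mathbb V_0]$, plus a $W^{\perp}$--$\mathfrak k$ cross term), the second-order vectors $[\eta,[\xi,v]]$ do leave $W$ and meet the deformation, and there is no reason a generic cross term makes that curvature vanish. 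The paper's Proposition \ref{mex} avoids this by a structurally different construction: it uses the whole filtration $\mathbb V_0\subset\mathbb V_1\subset\cdots\subset\mathbb V_d=\mathbb R^n$, keeps the canonical metric on all of $\mathbb R^n$ and the product metric on $\bigl(\mathbb V_{d-1}\oplus\bar{\mathbb V}\bigr)\oplus\mathfrak k$, and introduces only a cross term between a single vector $v'\in\mathbb V_{d-1}^{\perp}$ and two elements $w,z\in\mathfrak k$, i.e.\ the deformation sits at the top of the filtration, as far from $\mathbb V_0$ as possible. The nullity is then obtained not by computing $R=\nabla\nabla-\nabla\nabla-\nabla_{[\,,\,]}$ directly, but via formula (\ref{feqR}), which reduces everything to the first-order statement that the relevant elements of $\mathbb R^n$ induce transvections at $e$ --- a Koszul-formula check using only the skew-symmetry of $\mathrm{d}\rho(\xi)$ and the orthogonality of $\mathbb V_{d-1}$ to $\mathfrak k$.

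The irreducibility argument is also materially incomplete. A de Rham factor gives a subspace of $T_eG$ invariant under the full holonomy group, which is generated by the curvature operators \emph{together with their parallel translates}; such a subspace has no a priori reason to be $\mathrm{d}\rho(\mathfrak k)$-invariant, so irreducibility of $\rho$ does not apply directly, and ``the explicit form of $R$ should rule out any such factor'' is not an argument. The paper instead (i) exhibits one Killing field $\tilde v$, $v\in\mathbb V_{d-1}$, with $\langle\nabla_{w.e}v,z.e\rangle\neq0$ for generic cross-term constants, which by Lemma \ref{nse} excludes a Euclidean factor, and then (ii) invokes Lemma \ref{nse2}, whose proof requires the classification of the ideals of $\mathfrak g$ (Remark \ref{idealabeliano}) and the Ochiai--Takahashi/Gordon results on isometry groups of left invariant metrics. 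None of this is replaced by anything in your sketch. In short: the strategy is reasonable and the peripheral claims are fine, but the construction of the metric, the verification that $\mathbb V_0\subset\nu_e$, and the proof of irreducibility --- the three substantive assertions of the theorem --- are left as intentions rather than proofs.
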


Note that  for any simply connected and simple compact Lie group $K$ there exists a representation $\rho:K\to SO_n$ for  arbitrary large $n$.  The manifold $M$ given by the above theorem is homotopic to a compact simply connected Lie group and thus it admits no solvable transitive group of isometries.

Observe that, as we mentioned before, any compact locally irreducible homogeneous Riemannian manifold has a trivial nullity distribution (\cite[Proposition C]{DOV}). Then the compact manifold
$N= \mathbb Z^n\backslash M$, given by the above theorem, shows that the assumption of homogeneity can not be replaced by local homogeneity. This also answers a natural question (other examples of compact locally homogeneous spaces with non-trivial nullity were found in \cite{GG}).

\section {Preliminaries} \label {prel}
 In this section we introduce some basic facts which will be needed in the rest of the paper. For further details we refer to  \cite {DOV}.
Let $(M,\langle \, ,\, \rangle)$ be a (connected) complete Riemannian manifold with Levi-Civita connection $\nabla$ . A vector field $X$ of $M$ is called a {\it Killing field} if
$(\nabla _{\cdot} X)_p$
 is a skew-symmetric endomorphisms  of $T_pM$, for all $p\in M$.
 Such a condition, the so-called  {\it Killing equation}, is equivalent to the  fact that the  flow $\phi _t$ of $X$ is by isometries.
 The Lie algebra
$\mathrm {Lie}(I(M))$ of the group $ I(M)$ of isometries of $M$ is naturally identified with
the Lie algebra $\mathcal K (M)$ of Killing fields of $M$. Namely,
 $u\in \mathrm {Lie}(I(M))= T_e(I(M))$ induces the Killing field $\tilde u$
 defined by $\tilde u_p= \tfrac {\mathrm d\,}{\mathrm dt}_0\mathrm {Exp}
 (tu).p$ and the map $u\mapsto \tilde u$ is a linear isomorphism.
 Moreover, $[u,w]^{\tilde \ }= -[\tilde u, \tilde w]$ since
 $\tilde u$ and $\tilde w$ are naturally identified with the right invariant fields of $I(M)$ with initial conditions
 $u$ and $w$, respectively.

Let  $G$ act by isometries on $M$. The Killing fields of $M$ induced by elements in the Lie algebra of $G$ are denoted by
 $\mathcal K ^G(M)$ (briefly, the Killing fields induced by $G$).
 If the action of  $G$  on $M$ is not almost effective, then there  exist non-zero elements $z\in\mathfrak{g}$ such that  $\tilde z = 0$.

\medskip

Let $X\in \mathcal K (M)$. The initial conditions of $X$ at  $p\in M$  are given by  the pair
$$(X)^p: =(X_p, (\nabla X)_p)\in T_pM\oplus \Lambda^2(T_pM).$$
These conditions completely determine the Killing field $X$, in the sense that two Killing fields with the same initial conditions at some   point $p$  must coincide on $M$.
A Killing field is called a {\it transvection} at $p$ if its initial conditions at $p$ are $(X)^p: =(X_p, 0)$, i.e., if $(\nabla X)_p=0$.

A Killing field $X$ satisfies, besides the Killing equation,
the following identity, for all $p\in M$, $u,v\in T_pM$
\begin {equation}
\label {affK}
\ \ \ \ \ \ \ \ \ \ \ \ \nabla ^2_{u,v}X = R_{u,X_p}v
\ \ \ \ \ \ \ \ \ \ \mathit {affine \ Killing \ equation}.
\end {equation}
where $R$ is the Riemannian curvature tensor of $M$. The affine Killing equation reflects the fact that the flow of $X$ preserves the Levi-Civita connection.

From the affine Killing equation and the Bianchi identity one can determine the initial conditions at $p$ of the bracket $[X,X']$ of  two Killing fields
in terms of the initial conditions $(X)^p = (v, B)$, $(X')^p = (v', B')$. Namely,
\begin{equation} \label{initb}
([X,X'])^p = (B'(v)-B(v'), R_{v, v'} - [B,B']).
\end{equation}

This equation gives a useful formula for computing the curvature in terms of the  Killing fields $X$ and $Y$. Namely:
 \begin {equation} \label {feqR}
R_{X_p,Y_p} = (\nabla [X,Y])_p + [(\nabla X)_p, (\nabla Y)_p]
 \end {equation}

 The so-called Koszul formula gives the Levi-Civita connection  in terms of
brackets of vector fields and scalar products.
From the fact that the Lie derivative of the metric tensor along any Killing vector field is zero one has    the following useful formula (see e.g. (3.4)  of \cite {ORT}):
\begin {equation} \label{feq}
2\langle \nabla _XY , Z\rangle = \langle [X,Y],Z \rangle
+ \langle [X,Z],Y \rangle + \langle [Y, Z], X \rangle.
 \end {equation}

 Let $X$ be a Killing field and let $\phi _t (q)$ be its associated flow.
 Let $p\in M$ and let $c(t)= \phi _t (p)$ be the integral curve of $X$ by $p$. Let $\tau _t$ denote the parallel transport along $c(t)$, form $0$ to $t$. Then
 $\tau _{t}^{-1}\circ \mathrm {d}\phi _t : T_pM \to T_pM$ is a $1$-parameter subgroup of
 linear isometries. Moreover, (see e.g. Remark 2.3 of \cite {OS})
 \begin {equation} \label {etnabla}
 \tau _{t}^{-1}\circ \mathrm {d}_p\phi _t  =\mathrm{e} ^ {t(\nabla X)_p}
 \end {equation}
 or equivalently
 \begin {equation} \label {etnabla2}
 \tau _{t}= \mathrm {d}_p  \phi _t  \circ \mathrm{e} ^ {-t(\nabla X)_p}.
 \end {equation}

 \begin {remark} \label {accm} {\rm (cf. \cite [Remark 2.3] {DOV}).}
 Let $X\in \mathcal K(M)$ and
 let $T\subset SO (T_pM)$ be the torus given by the closure of $\{\mathrm{e} ^ {t(\nabla X)_p}: t \in \mathbb R\}$.
 Then for any given $d \in T$
  there is a sequence of real numbers  $\{t_n\}_{n\in \mathbb N}$ which tends to $+ \infty$ and such that
 $\tau _{t_n}^{-1}\circ \mathrm {d}_p\phi _{t_n}  =\mathrm{e} ^ {t_n(\nabla X)_p}$
 tends to $d$. In particular,
 for $d= \mathrm{e} ^ {s_0(\nabla X)_p}$, where $s_0\in \mathbb R$ is arbitrary.
\end {remark}

 The nullity of the curvature tensor $R$ at $p\in M$ is  the subspace $\nu_p$ of $T_pM$ defined as
$$\nu _p := \{v \in T_pM: R_{v, x} = 0, \forall x \in T_pM \}$$
or, equivalently, from the identities of the curvature tensor,
$$\ \ \ \ \ \nu _p := \{v \in T_pM: R_{x, y}v = 0, \forall x, y \in T_pM\}.$$

The map $\nu: p\mapsto\nu_p$ defines an autoparallel  distribution in the open and dense subset $\Omega$ of
$M$ where the dimension $\dim (\nu _q)$ is locally constant,  called the \textit{nullity distribution} of $M$. If $M$ is homogeneous, then $\nu$ is a distribution in $M$ and one has:
\begin{theorem}[cf. \cite{DOV} Theorem C]\label{compacto}  If $M$ is a homogeneous compact Riemannian manifold with no  Euclidean local de Rham factor, then its nullity distribution is trivial.
\end{theorem}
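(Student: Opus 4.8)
The plan is to derive Theorem~\ref{compacto} from the structural Theorem~\ref{dakl}: a non-trivial nullity would produce an abelian ideal of $\mathcal K(M)$ containing a non-central adapted transvection, which cannot happen when $\mathcal K(M)$ is reductive — and reductivity is exactly what compactness of $M$ buys us. Before invoking Theorem~\ref{dakl} one must first cut down to the simply connected irreducible situation for which it is stated.

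\textbf{Step 1 (reduction).} Under the hypothesis that $M$ has no Euclidean local de Rham factor, ``trivial nullity'' means precisely $\nu=\{0\}$, since by definition a nonzero parallel $\nu$ is tangent to a local Euclidean de Rham factor. The nullity is a pointwise invariant of the curvature tensor, hence it is preserved by the Riemannian universal covering $\tilde M\to M$, so it suffices to show $\nu_{\tilde M}=\{0\}$. Now $\tilde M$ is a complete simply connected homogeneous space, and by the structure of compact homogeneous manifolds (cf.\ \cite{DOV}) it is isometric to $\mathbb R^{k}\times M_{0}$ with $M_{0}$ compact simply connected homogeneous; the absence of a Euclidean factor forces $k=0$, so $\tilde M$ is compact. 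By de Rham's theorem $\tilde M=M_{1}\times\cdots\times M_{r}$ with each $M_{i}$ compact, simply connected, irreducible and non-flat, and each $M_{i}$ is homogeneous because the canonical de Rham decomposition is preserved by the transitive group $I^{0}(\tilde M)$. Since $R_{\tilde M}$ splits accordingly, $\nu_{\tilde M}=\bigoplus_{i}\nu_{M_{i}}$, and the statement is reduced to the case in which $M$ is itself compact, simply connected and irreducible.

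\textbf{Step 2 (the contradiction).} Suppose such an $M$ has $\nu\neq\{0\}$ and take $G=I^{0}(M)$, so that $\mathcal K^{G}(M)=\mathcal K(M)=\tilde{\mathfrak g}$. By \cite[Theorem A (1)]{DOV} we have $\nu\subsetneq\nu^{(1)}=\nu+\hat\nu$, hence $\hat\nu_{p}\neq\{0\}$; choosing $0\neq v\in\hat\nu_{p}$, \cite[Theorem A (2)]{DOV} provides an adapted transvection $Y$ with $Y_{p}=v$ (so $Y\neq 0$) that does \emph{not} lie in the center $\mathfrak z$ of $\mathcal K^{G}(M)=\tilde{\mathfrak g}$. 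On the other hand, $\nu_{p}\neq\{0\}$ gives, via Theorem~\ref{dakl}(1), a nonzero element of $\tilde{\mathfrak{tr}}^{p}_{0}$; by Theorem~\ref{dakl}(3) this set generates an abelian ideal $\mathfrak a$ of $\tilde{\mathfrak g}$, and by Remark~\ref{rem33} $\mathfrak a$ contains every adapted transvection, in particular $Y$. But $M$ compact implies $I(M)$ compact, so $\tilde{\mathfrak g}$ is a compact, hence reductive, Lie algebra: $\tilde{\mathfrak g}=\mathfrak z\oplus\mathfrak s$ with $\mathfrak s$ semisimple. The image of the ideal $\mathfrak a$ under the projection $\tilde{\mathfrak g}\to\mathfrak s$ is an abelian ideal of $\mathfrak s$, therefore zero, so $\mathfrak a\subseteq\mathfrak z$. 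Then $Y\in\mathfrak a\subseteq\mathfrak z$, contradicting the choice of $Y$. Hence $\nu=\{0\}$, and the theorem follows.

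\textbf{Main obstacle.} Once Theorem~\ref{dakl} is available, Step 2 is short and purely Lie-algebraic; the delicate point is Step 1, i.e.\ ensuring that the de Rham factors of the universal cover of a compact homogeneous space without Euclidean factor are again compact and homogeneous, so that Theorem~\ref{dakl} genuinely applies to each of them. It is also worth noting that the non-centrality clause in \cite[Theorem A (2)]{DOV} is essential to the argument: a compact simply connected irreducible homogeneous space may carry nonzero central Killing fields (a Berger $3$-sphere, for instance), so one cannot argue simply that the elements of $\tilde{\mathfrak{tr}}^{p}_{0}$ are central, hence vanish.
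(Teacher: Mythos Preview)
The paper does not give its own proof of Theorem~\ref{compacto}: the statement sits in the Preliminaries and is simply attributed to \cite[Theorem C]{DOV}. So there is nothing in the paper to compare your argument against directly. That said, your Step~2 is exactly the argument the authors \emph{sketch} in the Introduction when they write that ``the existence of the abelian ideal $\mathfrak a$ implies that the Lie algebra $\mathfrak g$ of $G$ cannot be reductive,'' calling this a ``conceptual much simpler proof of \cite[Proposition~C]{DOV}.'' You have fleshed out that sketch correctly: the adapted transvection $Y$ lies in the abelian ideal $\mathfrak a$ by Remark~\ref{rem33}, an abelian ideal of a compact Lie algebra is central, and $Y$ is non-central by Lemma~\ref{propsosculating}(2) --- contradiction.

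Two remarks. First, since Theorem~\ref{compacto} is placed \emph{before} Theorem~\ref{dakl}, you should flag that your proof is not circular: one checks that nothing in Sections~\ref{sec3}--\ref{aigt} (in particular the proof of Theorem~\ref{dakl}) invokes Theorem~\ref{compacto}, so the forward reference is harmless. Second, your Step~1 is the only place with real content beyond the paper's sketch, and you correctly identify it as the delicate point. The claim that the universal cover of a compact homogeneous manifold splits as $\mathbb R^{k}\times M_{0}$ with $M_{0}$ compact is standard (it follows from the structure of the universal cover of the compact group $I^{0}(M)$), but your citation ``cf.~\cite{DOV}'' is vague; a precise reference would strengthen the write-up. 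Once $\tilde M$ is compact, the homogeneity and compactness of each de Rham factor, and the splitting $\nu_{\tilde M}=\bigoplus_i \nu_{M_i}$, are straightforward as you say.
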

We recall Lemma 2.8 of \cite {DOV}:
\begin {lemma}\label {ABC} Let $M$ be a Riemannian manifold.
Let  $\gamma _v (t) $ be  a geodesic  with $\gamma_v(0)=p$, $\gamma_v'(0)=v$ and such that $\gamma_v'(t)$ belongs to the nullity subspace $\nu _{\gamma _v (t)}$  for every $t$ (if $\nu$ is a distribution, then this is equivalent to the fact that $v\in \nu _{p}$).  Denote by   $\tau_t$  the parallel transport along $\gamma _v(t)$ from $0$ to $t$.  If  $Z$ is an arbitrary Killing field on $M$, then

\begin {enumerate}[(i)]
\item  $ Z_{\gamma _v (t)} =  \tau _t (Z_p) + t \tau _t(\nabla _v Z)$.

\item  $\nabla _{\gamma_v '(t)} (\nabla Z) =0$, i.e.,  $\nabla Z$ is parallel along $\gamma _v (t)$, or equivalently
$$(\nabla Z)_{\gamma _v (t)}= \tau _t ((\nabla Z)_p) : = \tau _t
\circ (\nabla Z)_p \circ \tau _t^{-1}.$$
\end {enumerate}
\end {lemma}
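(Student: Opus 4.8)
The plan is to derive everything from the affine Killing equation \eqref{affK}, which for our field $Z$ reads $\nabla^2_{u,w}Z = R_{u,Z_q}w$ at a point $q$, together with the single structural input that $\gamma_v'(t)$ lies in the nullity. Throughout I write $c(t)=\gamma_v(t)$ and use that, since $c$ is a geodesic, its velocity field is parallel along itself, so $c'(t)=\tau_t(v)$.

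I would prove (ii) first, since (i) follows from it. Regarding $\nabla Z$ as the $(1,1)$-tensor $w\mapsto\nabla_w Z$, its covariant derivative along $c$ evaluated on a vector $w$ is precisely the second covariant derivative, $(\nabla_{c'(t)}(\nabla Z))(w)=\nabla^2_{c'(t),w}Z$. By the affine Killing equation this equals $R_{c'(t),Z_{c(t)}}w$. But $c'(t)\in\nu_{c(t)}$ kills the curvature tensor, so $R_{c'(t),Z_{c(t)}}w=0$ for every $w$. Hence $\nabla_{c'(t)}(\nabla Z)=0$, i.e. $\nabla Z$ is parallel along $c$. The displayed reformulation $(\nabla Z)_{c(t)}=\tau_t\circ(\nabla Z)_p\circ\tau_t^{-1}$ is then just the standard description of parallel transport of an endomorphism field in terms of the parallel transport $\tau_t$ of vectors.

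For (i), I would differentiate the vector field $t\mapsto Z_{c(t)}$ along $c$. Its covariant derivative is $\tfrac{D}{dt}Z_{c(t)}=\nabla_{c'(t)}Z=(\nabla Z)_{c(t)}(c'(t))$. Substituting (ii) and $c'(t)=\tau_t(v)$, this becomes $\tau_t\bigl((\nabla Z)_p(\tau_t^{-1}\tau_t v)\bigr)=\tau_t(\nabla_v Z)$. Thus $Z_{c(t)}$ solves the linear first-order equation $\tfrac{D}{dt}A(t)=\tau_t(w_0)$ with $w_0:=\nabla_v Z$ fixed and $A(0)=Z_p$. Comparing with $B(t):=\tau_t(Z_p)+t\,\tau_t(w_0)$, whose covariant derivative is also $\tau_t(w_0)$ (the two terms $\tau_t Z_p$ and $\tau_t w_0$ being parallel), and noting $B(0)=Z_p$, uniqueness of solutions of this ODE gives $A=B$, which is exactly the claimed formula.

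The one point requiring care---and the only place where the hypothesis is genuinely used---is the identification $(\nabla_{c'(t)}(\nabla Z))(w)=\nabla^2_{c'(t),w}Z$ together with the bookkeeping of parallel-transport conventions for the endomorphism $\nabla Z$. Once the nullity condition is fed into the affine Killing equation, the remainder is a routine integration of a parallel ODE, so I do not anticipate a real obstacle beyond keeping these conventions straight.
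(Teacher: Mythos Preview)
Your argument is correct. The paper does not actually prove this lemma here; it is stated as a recollection of \cite[Lemma~2.8]{DOV}, so there is no in-text proof to compare against. That said, your approach---feeding the nullity hypothesis into the affine Killing equation \eqref{affK} to get $\nabla_{c'(t)}(\nabla Z)=0$, then integrating the resulting first-order parallel ODE for $Z_{c(t)}$---is exactly the natural one, and is the argument one finds in \cite{DOV}. The only cosmetic remark is that your identification $(\nabla_{c'(t)}(\nabla Z))(w)=\nabla^2_{c'(t),w}Z$ is simply the definition of the covariant derivative of the $(1,1)$-tensor $\nabla Z$, so there is nothing delicate there either.
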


Let $M=G/H$ be a simply connected  homogeneous Riemannian manifold without Euclidean de Rham factor and assume that its nullity distribution $\nu$ is non-trivial. Observe that $\nu$ is not a parallel distribution, otherwise $M$ would split off a Euclidean factor. In addition, in this case, a non-trivial Killing field cannot be always tangent to the nullity distribution  (\cite [Propositon 3.19]{DOV}).

 For a $G$-invariant distribution $\mathcal{D}$ on $M$, consider the \textsl{osculating distribution} $\mathcal{D}^{(1)}$ of $\mathcal{D}$ defined as
	$$\mathcal{D}^{(1)}_q:=\mathcal{D}_q+ \mathrm{span}\,  \{ \nabla _w X : X \in C^\infty (\mathcal{D}), w\in T_qM\}$$
where $C^{\infty}(\mathcal{D})$ denotes the tangent fields of $M$ that lie in $\mathcal{D}$. It is not difficult to see that $\mathcal{D}^{(1)}$ is a $G$-invariant distribution of $M$ such that $\mathcal{D}\subset \mathcal{D}^{(1)}$. Moreover, if $\mathcal{D}$ is not parallel, then $\mathcal{D}^{(1)}$ properly contains  $\mathcal{D}$.
On has that (cf. \cite[Lemma 3.1]{DOV})
\begin{equation}\label{firstosculating}\mathcal{D}^{(1)}=\mathcal{D}+\hat{\mathcal{D}}\end{equation}
where
$$\hat{\mathcal{D}}_q:=\{\nabla_v Z\,:\, Z\in \mathcal{K}^G(M),\, v\in \mathcal{D}_q\}.
$$
$\hat{\mathcal{D}}$ is called the \textsl{adapted} distribution of $\mathcal{D}$. The adapted distribution might depend on the presentation group $G$, but the higher order osculating distributions $\mathcal{D}^{(k)}:=(\mathcal{D}^{(k-1)})^{(1)}$ (where $\mathcal{D}^{(0)}=\mathcal{D})$ do not depend on $G$.

Let $\nu^{(1)}$  be the osculating distribution of the nullity distribution and set $\nu^{(2)}:=(\nu^{(1)})^{(1)}$, called the second order osculating distribution of $\nu$. Both $\nu^{(1)}$ and $\nu^{(2)}$ are $G$-invariant distributions.  Since $\nu$ is not parallel, $\nu$ is properly contained in $\nu^{(1)}$. So one has that
\begin{equation}\label{nupic}
	\nu_q ^{(1)}= \nu _q + \hat{\nu}_q=\nu_q+ \mathrm {span}\,  \{\nabla _v Z: Z\in \mathcal K ^G (M) , v\in \nu _q\}
\end{equation}
and  one can describe $\nu^{(2)}$ as
 \begin{equation}\label{nupic4}\nu ^{(2)} = \nu ^{(1)} + \hat {\nu ^{(1)}}=\nu + \hat{\nu} + \hat{\hat {\nu}}.
\end{equation}
 where $\hat{\hat {\nu}}= \mathrm {span}\,  \{\nabla _v Z: Z\in \mathcal K ^G (M) , v\in \hat {\nu} _q \}$ (cf. \cite[Formula 5.0.2]{DOV}).

\medskip
The osculating distributions of $\nu$ have the following properties:

\begin{lemma}[\cite{DOV} Theorem A and Corollary 3.13]\label{propsosculating} Let $M$ be a Riemannian homogeneous manifold which does not split off a local Euclidean de Rham factor and with non-trivial nullity distribution $\nu$. Then:
	
\begin{enumerate}
	\item $\nu$ and $\nu ^{(1)}$ are autoparallel distributions and $ \nu \subsetneq \nu ^{(1)}\subsetneq \nu ^{(2)}\subsetneq TM$. In particular,  the co-dimension of $\nu$ is at least $3$.
\item For any $v\in \hat {\nu}_q$ there exists a  transvection $Y$ of $M$ which belongs to $\mathcal K^G(M)$ and such that $Y_p=v$ (such a transvection is called an \textsl{ adapted transvection}). Moreover, the  Jacobi operator $R_{.,\, v}v$ is null,  $$[Y, [Y,\mathcal K(M)]] = \{0\}$$ and $[Y,\mathcal K^G(M)] \neq  \{0\}$, i.e $Y$ does not belong to the center of $\mathcal{K}^G(M)$.
\end{enumerate}
\end{lemma}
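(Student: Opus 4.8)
As the statement records, this is \cite[Theorem A]{DOV} and \cite[Corollary 3.13]{DOV}; the plan is to recover it from the preliminaries above. Part (1) is the analysis of the osculating tower $\nu\subset\nu^{(1)}\subset\nu^{(2)}$, and part (2) produces the adapted transvections and reads off their Lie--algebraic consequences. I expect the two genuinely delicate points to be the autoparallelism of $\nu^{(1)}$ in directions transverse to $\nu$ and, above all, the production of a transvection \emph{inside} $\mathcal K^G(M)$ with a prescribed value in $\hat\nu_q$; everything else is bookkeeping with the affine Killing equation \eqref{affK}, the initial--condition formulas \eqref{initb}--\eqref{feqR}, Lemma \ref{ABC} and the compactness principle of Remark \ref{accm}.

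For part (1): First I recall that, on the open set where $\dim\nu$ is locally constant, $\nu$ is autoparallel --- a one--line consequence of the second Bianchi identity together with $R_{\,\cdot\,,\,\cdot}v=0$ for $v\in\nu$ --- and homogeneity upgrades this to all of $M$. The leaves are then totally geodesic, hence intrinsically flat (their curvature is $R$ restricted to $\nu$) and complete (because $M$ is). If $\nu$ were parallel, the local de Rham theorem would split off a flat, hence Euclidean, de Rham factor, against the hypothesis; so $\nu$ is not parallel and, by the generalities on osculating distributions recalled above, $\nu\subsetneq\nu^{(1)}$. For the autoparallelism of $\nu^{(1)}=\nu+\hat\nu$ the key is Lemma \ref{ABC}(ii): along a nullity geodesic $\gamma_u$ each $\nabla Z$ is parallel, so $\hat\nu$, hence $\nu^{(1)}$, is parallel along $\nu$; for a transverse $X$ one expands $\nabla_X(\nabla_V Z)=\nabla^2_{X,V}Z+\nabla_{\nabla_X V}Z=R_{X,Z_q}V+\nabla_{\nabla_X V}Z$ by \eqref{affK}, observes $R_{X,Z_q}V=0$ since $V\in\nu$, and controls $\nabla_X V$ --- this last step is the technical heart of \cite[Cor.\ 3.13]{DOV}. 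After reducing to the irreducible case (all these distributions split over de Rham factors), $\nu^{(1)}$ cannot be parallel either: otherwise it would equal $TM$, which is excluded because $\nu^{(2)}$ sits inside the bounded distribution $\mathcal U$, and $\mathcal U\subsetneq TM$ --- were $\mathcal U=TM$ the flat complete leaves together with transitivity would force a Euclidean factor. Hence $\nu\subsetneq\nu^{(1)}\subsetneq\nu^{(2)}\subsetneq TM$, a chain of three proper inclusions, so the co-nullity is at least $3$.

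For part (2): fix $q$ and $0\neq v\in\hat\nu_q$ and write $v=(\nabla_u Z)_q$ with $u\in\nu_q$, $Z\in\mathcal K^G(M)$. Along the nullity geodesic $\gamma_u$, Lemma \ref{ABC} gives $Z_{\gamma_u(t)}=\tau_t Z_q+t\,\tau_t v$ and $(\nabla Z)_{\gamma_u(t)}=\tau_t(\nabla Z)_q\tau_t^{-1}$. Choosing $g_t\in G$ with $g_t(\gamma_u(t))=q$, the Killing field $(g_t)_*Z=\mathrm{Ad}(g_t)Z\in\mathcal K^G(M)$ has initial conditions at $q$ equal to $\bigl(a_t(Z_q+tv),\,a_t(\nabla Z)_q a_t^{-1}\bigr)$, where $a_t:=\mathrm d g_t\circ\tau_t\in O(T_qM)$ and the rotational part stays bounded in $t$. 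Rescaling by $1/t$ and passing to a sequence $t_n\to\infty$ with $a_{t_n}\to a$ (compactness of $O(T_qM)$, in the spirit of Remark \ref{accm}), the initial conditions of $\frac1{t_n}(g_{t_n})_*Z$ converge, the $\frac1{t_n}Z_q$ term and the whole rotational term die in the limit, and $Y:=\lim_n\frac1{t_n}(g_{t_n})_*Z$ --- a limit taken inside the finite-dimensional space $\mathcal K^G(M)$, hence again in $\mathcal K^G(M)$ --- is a transvection at $q$ with value $av$. Since $G$ preserves the $G$-invariant distribution $\hat\nu$ and, by Lemma \ref{ABC}, parallel transport along $\gamma_u$ preserves $\hat\nu$ as well, the limit $a$ preserves $\hat\nu_q$, so $av\in\hat\nu_q$; the remaining point --- that the value can be arranged to be exactly $v$ --- is squeezed out of the fact that the adapted transvections form an isotropy-invariant subspace of $\hat\nu_q$ together with the freedom in the choice of the $g_t$. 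This is the crux of \cite[Theorem A(2)]{DOV}.

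Finally, the three Lie--algebraic properties. For $R_{\,\cdot\,,v}v=0$ one argues, as in \cite{DOV}, that null directions of $\nu^{(1)}$ inherit the vanishing of the Jacobi operator; concretely, by $G$-invariance the Jacobi operator of $(g_{t_n})_*Z$ at $q$ is $\mathrm d g_{t_n}$-conjugate to that of $Z$ at $\gamma_u(t_n)$, which the affine Killing equation along the nullity geodesic $\gamma_u$ keeps of order $o(t_n^2)$, so the rescaled limit gives $R_{\,\cdot\,,v}v=0$. For $[Y,\mathcal K^G(M)]\neq\{0\}$: if $Y$ centralized $\mathcal K^G(M)$, then \eqref{feqR} with $(\nabla Y)_q=0$ would force $R_{v,\,\cdot}=0$, i.e.\ $v\in\nu_q$, whence \eqref{affK} gives $\nabla^2Y=R_{\,\cdot\,,v}\,\cdot=0$; so $\nabla Y$ is parallel and, vanishing at $q$, identically zero, and a non-zero parallel Killing field splits off a Euclidean de Rham factor --- impossible, so $v=0$, excluded. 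For $[Y,[Y,\mathcal K(M)]]=0$: applying \eqref{initb} twice to $(Y)^q=(v,0)$ and $(W)^q=(W_q,B_W)$ yields $\bigl([Y,[Y,W]]\bigr)^q=\bigl(R_{v,W_q}v,\,R_{v,B_Wv}\bigr)$; the first entry vanishes by the previous point, the second because $B_Wv=\nabla_vW$ lies in $\nu^{(2)}_q$ (as $\nu^{(1)}$ is autoparallel and $\nu^{(2)}$ is independent of the presentation group) and $R$ is trivial on the pairing of $v$ with $\nu^{(2)}_q$ --- a further consequence of the structure of the osculating tower. Thus $\mathrm{ad}_Y^2=0$ on $\mathcal K(M)$. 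The single hardest step, as flagged, is pinning the value of the constructed adapted transvection down to the given $v\in\hat\nu_q$.
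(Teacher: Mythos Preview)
The paper does not prove this lemma: it is quoted verbatim from \cite[Theorem~A and Corollary~3.13]{DOV} as background in the preliminaries, with no argument given. So there is no ``paper's own proof'' to compare against; what you have written is an attempted reconstruction of the \cite{DOV} argument.

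Your outline is broadly the right shape --- the linear--growth formula of Lemma~\ref{ABC} plus a rescale--and--extract--a--limit manoeuvre is exactly how adapted transvections are produced in \cite{DOV}, and you are honest about deferring the two hard steps (autoparallelism of $\nu^{(1)}$ transverse to $\nu$, and pinning the value of the limit transvection to the prescribed $v$) to \cite{DOV}. One concrete gap worth flagging: in your computation of $\bigl([Y,[Y,W]]\bigr)^q=\bigl(R_{v,W_q}v,\,R_{v,\nabla_vW}\bigr)$ you dispose of the second entry by asserting that ``$R$ is trivial on the pairing of $v$ with $\nu^{(2)}_q$''. This is true, but it is not a formal consequence of the osculating tower alone; what is actually needed is $R_{\mathfrak{tr}^p,\mathfrak u^p}=0$ (equation~\eqref{trbau}, from \cite[Remark~3.10]{DOV}) together with $\nu^{(2)}\subset\mathcal U$ (equation~\eqref{osculatingandbounded}), both of which are further pieces of \cite[Theorem~A]{DOV}. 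Without naming these inputs the step is a genuine hole.

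It may interest you that the present paper offers a \emph{different}, conceptually cleaner route to $\mathrm{ad}_Y^2=0$: once Theorem~\ref{dakl}(3) is in hand, every adapted transvection lies in the abelian ideal $\mathfrak a\lhd\tilde{\mathfrak g}$ (Remark~\ref{rem33}), and then $[Y,[Y,\mathcal K(M)]]\subset[Y,\mathfrak a]=0$ is immediate. The proof of Theorem~\ref{dakl} uses only the \emph{existence} of adapted transvections and the vanishing of the Jacobi operator from Lemma~\ref{propsosculating}(2), not the $\mathrm{ad}_Y^2=0$ clause itself, so this is a non-circular and more transparent explanation than the direct computation you attempt.
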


Observe that if  $X$ is a transvection at $p$, since $\nabla$ is torsion free,
\begin{equation}\label {5565} \nabla _{X_p}Z  = [X, Z]_p \, .\end{equation}
Hence, from property (2) of Lemma \ref{propsosculating} one gets that
\begin{equation}\label {666}
	\hat{\hat {\nu}}_p = \{{[X, Z]_p}: Z \in \mathcal {K}^G(M), X \text { is an adapted transvection at } p\}. \end{equation}

\section {The ideal {$\mathfrak u_{00}$}} \label{sec3}
We keep the notation of Section \ref{prel}.
Let $M=G/H$ be a simply connected Riemannian homogeneous manifold without Euclidean de Rham factor and   assume that the nullity distribution $\nu$ of $M$
is non-trivial.

The \textit{bounded algebra of $M$ at $p$} is given by
\begin{equation}\label{bounded}\mathfrak u^p : = \{Z\in \mathcal K ^G(M): \nabla _ {\nu _p}Z \subset
\nu _p\}. \end{equation}
From formula (\ref {initb}) one obtains that $\mathfrak u^p$ is a Lie subalgebra of $\mathcal K^G(M)$ (see \cite [Section 5] {DOV})  which verifies that for every $g\in G$,  $$\mathfrak u^{gp}= \mathrm {Ad}_g\mathfrak u^{p}.$$
So $\mathcal U_p=  \mathfrak u^p.p$ defines an integrable distribution. Namely, the integral manifold of $\mathcal U$ by $p$ is the orbit by $p$ of the Lie group associated to  $\mathfrak u^p$.
The distribution $\mathcal U$ is the so-called {\it bounded distribution}. It  contains the first two osculating distributions (\cite[Theorem A]{DOV}). More precisely, on has that
	\begin{equation}\label{osculatingandbounded}
		\nu \subsetneq \nu ^{(1)}\subsetneq \nu ^{(2)}\subset \mathcal U\subsetneq TM.
	\end{equation}

Observe that any transvection at $p$ belongs to $\mathfrak u^p$.  In particular,
\begin{equation} \label{eq:trp1}
\mathfrak {tr}^p := \{
    X\in \mathcal K^G(M): X \textrm{ is a transvection at $p$ with } X_p\in \nu ^{(1)}_p\} \subset \mathfrak u^p.
\end{equation}

Moreover, by   \cite[Remark 3.10]{DOV} one has that
\begin{equation}\label{trbau}
	R_{\, \mathfrak {tr}^p,\mathfrak u^p}=0
\end{equation}
Then, from (\ref{initb}),  it follows that
$\mathfrak {tr}^p$ is an abelian Lie subalgebra of $\mathfrak u ^p$.

 Observe that since $\hat{\nu}_p\subset \nu_p^{(1)}$, from item (2) of Lemma \ref{propsosculating} one gets that for each $u,v\in \hat{\nu}_p$ there exist transvections $X,Y\in \mathfrak{tr}^p$ such that $X_p=u$, $Y_p=v$. So by (\ref{trbau}) $R_{u,v}=R_{X_p,Y_p}=0$, i.e.,
	\begin{equation} \label{adtrbau}
R_{\hat{\nu}_p,\hat{\nu}_p}=0.
	\end{equation}

\begin{remark}\label{rem890} From  (\ref{nupic}) and  Lemma \ref{propsosculating}, any  $X\in \mathfrak {tr}^p$ is the sum of an adapted transvection at $p$ (i.e., a transvection whose value al $p$ belongs to $\hat{\nu}_p$) and a transvection tangent to the nullity at $p$. But we do not know whether there exists a transvection in any direction of $\nu_p$. We will prove this fact in the next section (but such a transvection might not belong to $\mathcal K ^G(M)$).
\end{remark}

\begin{rem}\label {rem-uno}
From Lemma \ref{propsosculating} one has that $R_{\, \cdot  ,X_p}X_p =0$, for all $X\in \mathfrak {tr} ^p$. Polarizing this formula one gets that
$R_{\, \cdot  ,X_p}Y_p + R_{\, \cdot ,Y_p}X_p = 0$ for all $X, Y \in  \mathfrak {tr} ^p$. Note that from  Bianchi identity
$R_{\, \cdot   ,X_p}Y_p - R_{\, \cdot  ,Y_p}X_p = -R_{X_p,Y_p}\cdot = 0$ (see (\ref {trbau})).
Then,  for all $X,Y\in \mathfrak {tr} ^p$, $$R_{\, \cdot  ,X_p}Y_p =0.$$
\end{rem}
Let
\begin{equation}\label {u_0}\mathfrak u^p_{0} : = \{Z\in \mathcal K ^G(M): \nabla _ {\nu _p}Z = 0  \}
\end{equation}
and
\begin{equation}\label {u_00}
\mathfrak u^p_{00} : = \{Z\in \mathcal K ^G(M):
\nabla _ {\nu _p}Z = 0
\text { and } \nabla _ {\hat {\nu} _p}Z = 0 \}.
\end {equation}
Equivalently, by  (\ref {nupic}),
\begin{equation}\label {u_00'}
    \mathfrak u^p_{00} : = \{Z\in \mathcal K ^G(M):
\nabla _ {(\nu^{(1)}) _p}Z = 0 \}
\end{equation}

Clearly,
\begin{equation}\label{001}\mathfrak {tr} ^p \subset \mathfrak u^p_{00}\subset \mathfrak u ^p_0\subset \mathfrak u^p.
\end{equation}

\begin {lemma}\label{abcdef}
If $U\in \mathfrak u ^p_0$, then $\nabla _{\hat {\nu}_p}U \subset \hat {\nu}_p$.
\end{lemma}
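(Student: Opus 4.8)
The statement to prove is Lemma \ref{abcdef}: if $U\in\mathfrak u_0^p$, then $\nabla_{\hat\nu_p}U\subset\hat\nu_p$. Recall $\hat\nu_p=\mathrm{span}\{\nabla_v Z: Z\in\mathcal K^G(M),\ v\in\nu_p\}$, so a generic element of $\hat\nu_p$ has the form $w=(\nabla_v Z)_p$ with $v\in\nu_p$, $Z\in\mathcal K^G(M)$. I want to show $(\nabla_w U)_p\in\hat\nu_p$. The natural tool is the bracket formula for initial conditions, equation (\ref{initb}), or equivalently the curvature formula (\ref{feqR}), which I would apply to the Killing fields $Z$ and $U$.

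**Key steps.** First I would write out $(\nabla_w U)_p$ where $w=(\nabla Z)_p v$. The strategy is to relate $(\nabla Z)_p v$ to a bracket: since $v\in\nu_p$ is a fixed tangent vector (not a vector field), I should instead pick the adapted-transvection description. By item (2) of Lemma \ref{propsosculating} combined with (\ref{nupic}), for $v\in\nu_p$ the vector $(\nabla_v Z)_p$ lies in $\hat\nu_p$ and, more usefully, by (\ref{666}) and the transvection identity (\ref{5565}), every element of $\hat\nu_p$ can be written as $[X,Z]_p=(\nabla_{X_p}Z)_p$ for an adapted transvection $X\in\mathfrak{tr}^p$ (with $X_p\in\nu_p$) and $Z\in\mathcal K^G(M)$. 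Wait — more carefully, $\hat\nu_p$ consists of $\nabla_v Z$ with $v\in\nu_p$; taking $X$ a transvection at $p$ with $X_p=v$ (such $X$ exists in $\mathcal K^G(M)$ only when $v\in\hat\nu_p$, but for the nullity direction we use Remark \ref{rem890} — however here $v\in\nu_p$ and we only need $\nabla_v Z$, and such a transvection need not exist in $\mathcal K^G(M)$). So instead I would argue directly: take $Z\in\mathcal K^G(M)$, $v\in\nu_p$, set $w=(\nabla_v Z)_p\in\hat\nu_p$. Then $(\nabla_w U)_p = (\nabla_{(\nabla_v Z)_p}U)_p$. Now use the affine Killing equation (\ref{affK}) applied to $U$: $\nabla^2_{a,b}U=R_{a,U_p}b$. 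I want to compute the second covariant derivative $\nabla^2_{v,\,\cdot}U$; by Lemma \ref{ABC}(ii), along a nullity geodesic $\gamma_v$ the endomorphism $\nabla U$ is parallel, which encodes $\nabla_v(\nabla U)=0$ in the sense of the full tensor. Combined with $U\in\mathfrak u_0^p$, i.e. $(\nabla_v U)_p=0$ for $v\in\nu_p$, differentiating the identity "$(\nabla_{\nu}U)$ is constant-zero along nullity" should let me commute derivatives.

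The cleanest route: I would use equation (\ref{feqR}), $R_{Z_p,U_p}=(\nabla[Z,U])_p+[(\nabla Z)_p,(\nabla U)_p]$, but replace $Z$ by the flow; better, differentiate the relation defining $\hat\nu$. Concretely, let $c(t)=\gamma_v(t)$ be the nullity geodesic with $c'(0)=v\in\nu_p$ and $\tau_t$ parallel transport along it. By Lemma \ref{ABC}, $(\nabla Z)_{c(t)}=\tau_t(\nabla Z)_p$ and $(\nabla U)_{c(t)}=\tau_t(\nabla U)_p$ for any Killing fields (this uses only that $c'(t)\in\nu_{c(t)}$). Since $U\in\mathfrak u_0^p$ means $(\nabla U)_p v=0$, I get $(\nabla U)_{c(t)}\,c'(t)=\tau_t((\nabla U)_p\,\tau_t^{-1}c'(t))=\tau_t((\nabla U)_p v)=0$, so $U\in\mathfrak u_0^{c(t)}$ for all $t$. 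Now consider the $\hat\nu$-element as a field: $W(t):=(\nabla_{c'(t)}Z)_{c(t)}=(\nabla Z)_{c(t)}c'(t)=\tau_t((\nabla Z)_p v)=\tau_t w$. I claim $\nabla_{c'(t)}W = 0$: indeed $\nabla_{c'(t)}\bigl(\tau_t w\bigr)=0$ since $\tau_t w$ is by definition parallel along $c$. So $W$ is the parallel transport of $w$. Hence $(\nabla_w U)_p$ can be recovered from differentiating $(\nabla_{W(t)}U)_{c(t)}$ — but that is not quite $\nabla_w U$ unless... Actually I realize the sharper point: we want $(\nabla U)_p w\in\hat\nu_p$ with $w=(\nabla Z)_p v$. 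Write $(\nabla U)_p(\nabla Z)_p v = [(\nabla U)_p,(\nabla Z)_p]v+(\nabla Z)_p(\nabla U)_p v=[(\nabla U)_p,(\nabla Z)_p]v+0$, using $(\nabla U)_p v=0$. So it suffices to show $[(\nabla U)_p,(\nabla Z)_p]v\in\hat\nu_p$. By (\ref{initb}) applied to $Z,U$, the $\Lambda^2$-component of $[Z,U]$ at $p$ is $R_{Z_p,U_p}-[(\nabla Z)_p,(\nabla U)_p]$; evaluated on $v\in\nu_p$ this gives $(\nabla[Z,U])_p v = R_{Z_p,U_p}v - [(\nabla Z)_p,(\nabla U)_p]v = -[(\nabla Z)_p,(\nabla U)_p]v$ since $v\in\nu_p$ kills curvature. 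Thus $[(\nabla U)_p,(\nabla Z)_p]v = -[(\nabla Z)_p,(\nabla U)_p]v = (\nabla_v[Z,U])_p$, and since $[Z,U]\in\mathcal K^G(M)$ and $v\in\nu_p$, this is exactly an element of $\hat\nu_p$ by (\ref{nupic}). This closes the proof.

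**Main obstacle.** The only delicate point is bookkeeping the sign and the order of the Lie bracket in (\ref{initb}) versus the commutator of endomorphisms, and making sure $[Z,U]$ genuinely lies in $\mathcal K^G(M)$ — which it does since $\mathcal K^G(M)\simeq\mathfrak g$ is a Lie subalgebra. The conceptual content is just: $\hat\nu_p$ is spanned by $\{(\nabla U)_p v: v\in\nu_p,\ U\in\mathcal K^G(M)\}$ and is closed under the commutator action of $\{(\nabla U)_p: U\in\mathfrak u_0^p\}$ precisely because that commutator, applied to $v\in\nu_p$, reproduces $\nabla_v$ of another $G$-Killing field modulo the curvature term which vanishes on $\nu_p$. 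I expect the write-up to be three or four lines once (\ref{initb}) is invoked correctly.
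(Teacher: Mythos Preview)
Your final argument is correct and is essentially the paper's own proof: both compute, via formula (\ref{initb}), that for $v\in\nu_p$ and $Z\in\mathcal K^G(M)$ one has $(\nabla U)_p\bigl((\nabla Z)_p v\bigr)=\pm(\nabla_v[U,Z])_p\in\hat\nu_p$, using $(\nabla U)_p v=0$ and $R_{\cdot,\cdot}v=0$. The exploratory detours through transvections and nullity geodesics in your plan are unnecessary and can be dropped.
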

\begin{proof} Let $(U)^p=(u,B)$ be the initial conditions of $U$. Let $Z\in \mathcal K ^G(M)$ be arbitrary with initial conditions
$(Z)^p = (z, B')$  and put $W = [U,Z]$. Let $v\in \nu _p$ be arbitrary.
Observe that, from the definition of $\hat {\nu}_p$,  $ \nabla _vW\in \hat {\nu}_p$.
From (\ref {initb}) the initial conditions of $W$ at $p$ are
$$(W)^p = (B'(u) - B(z), R_{u,z} - (BB' -B'B)).$$
Since $v\in \nu_p$, one has that  $B(v) = 0$ and $R_{u,z}(v)=0$. Hence
$$\nabla _vW=- B(B'(v))\in \hat {\nu}_p.$$
But the set $\{B'(v)\}$, for $Z\in \mathcal K^G(M)$ and $v\in \nu _p$ arbitrary, spans $\hat {\nu}_p$.
Then $$B(\hat {\nu}_p)\subset \hat {\nu}_p\, .$$
\end{proof}

\begin{thm}\label{idealmain} $\mathfrak u^p_{00}$ is an ideal of $\mathcal K^G(M)$ and does not depend on $p\in M$ (and so in the next we will refer to it as
$\mathfrak u_{00}$). Moreover, for any $p\in M$,  $\mathfrak {tr}^p$  is contained  in the center
$\mathfrak z ( \mathfrak u_{00})$ of  $\mathfrak u_{00}$.

\end{thm}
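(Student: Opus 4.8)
The plan is to reduce both assertions to the single statement that $\mathfrak u^p_{00}$ does not depend on $p$. Since $\mathfrak u^{gp}_{00}=\mathrm{Ad}_g\,\mathfrak u^p_{00}$ for $g\in G$ (exactly as for $\mathfrak u^p$) and $G$ acts transitively, ``$p$-independence'' is equivalent to ``$\mathrm{Ad}_g$ fixes $\mathfrak u^p_{00}$ for every $g\in G$'', i.e.\ to $\mathfrak u^p_{00}$ being an ideal of $\mathcal K^G(M)$; so it suffices to prove $p$-independence. As a preliminary I would check that $\mathfrak u^p_{00}$ is at least a subalgebra: for $U,U'\in\mathfrak u^p_{00}$ with $(U)^p=(u,B)$, $(U')^p=(u',B')$, formula (\ref{initb}) gives $(\nabla[U,U'])_p=R_{u,u'}-[B,B']$, and on $v\in\nu^{(1)}_p$ the commutator term vanishes because $B$ and $B'$ annihilate $\nu^{(1)}_p$, while $R_{u,u'}(v)=0$ because $u,u'\in\mathcal U_p=\mathfrak u^p\cdot p$, $v\in\nu^{(1)}_p$, and $R_{\nu^{(1)}_p,\mathcal U_p}=0$ by (\ref{trbau}) --- one brings the $\nu^{(1)}_p$-slot into the curvature-operator position using the pair symmetry of $R$ and the first Bianchi identity. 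The same computation (using only that $(\nabla U)_p$ kills $\hat\nu_p$) shows, for every $Z\in\mathcal K^G(M)$, that $[Z,U]\in\mathfrak u^p_0$; by Lemma \ref{abcdef} applied to $[Z,U]$ this forces $(\nabla[Z,U])_p(\hat\nu_p)\subseteq\hat\nu_p$, so the only thing missing for $[Z,U]\in\mathfrak u^p_{00}$ is the vanishing of $(\nabla[Z,U])_p$ on $\hat\nu_p$.

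The heart of the argument is the following geometric fact: if $U\in\mathfrak u^p_{00}$ then $U\in\mathfrak u^q_{00}$ for every $q$ on the integral leaf $L$ of $\nu^{(1)}$ through $p$. Since $\nu^{(1)}$ is autoparallel (Lemma \ref{propsosculating}(1)), $L$ is totally geodesic; given $v\in\nu^{(1)}_p$, the geodesic $\sigma$ with $\sigma'(0)=v$ stays in $L$, so $\sigma'(t)\in\nu^{(1)}_{\sigma(t)}$ and parallel transport $\tau_t$ along $\sigma$ maps $\nu^{(1)}_p$ onto $\nu^{(1)}_{\sigma(t)}$. For a $\tau_t$-parallel field $X$ with $X(0)\in\nu^{(1)}_p$, the affine Killing equation (\ref{affK}) gives
\begin{equation*}
\tfrac{\mathrm{D}}{\mathrm{d}t}\bigl(\nabla_{X(t)}U\bigr)=\nabla^2_{\sigma'(t),X(t)}U=R_{\sigma'(t),\,U_{\sigma(t)}}X(t)=0 ,
\end{equation*}
the last equality because $\sigma'(t),X(t)\in\nu^{(1)}_{\sigma(t)}$ and $R_{\,\cdot\,,a}b=0$ for $a,b\in\nu^{(1)}$ (a consequence of Remark \ref{rem-uno} together with the definition of $\nu$, valid at every point by homogeneity). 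Hence $t\mapsto\nabla_{X(t)}U$ is $\tau_t$-parallel; it vanishes at $t=0$ (as $X(0)\in\nu^{(1)}_p$, $U\in\mathfrak u^p_{00}$), so it vanishes identically, and letting $X(0)$ run over $\nu^{(1)}_p$ this says $(\nabla U)_{\sigma(t)}|_{\nu^{(1)}_{\sigma(t)}}=0$. As $M$ is complete, $L$ is geodesically connected, so $\mathfrak u^p_{00}\subseteq\mathfrak u^q_{00}$ for all $q\in L$, and by symmetry $\mathfrak u^p_{00}=\mathfrak u^q_{00}$ along $L$; equivalently, $\mathrm{Ad}_g$ fixes $\mathfrak u^p_{00}$ whenever $gp\in L$.

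The remaining point --- promoting ``constant along $\nu^{(1)}$-leaves'' to ``constant on $M$'' --- is the one I expect to be genuinely delicate. The normalizer $\mathfrak n=\{Z\in\mathcal K^G(M):[Z,\mathfrak u^p_{00}]\subseteq\mathfrak u^p_{00}\}$ is a subalgebra containing the isotropy $\mathfrak h$, containing $\mathfrak u^p_{00}$, and (by the previous step) every Killing field tangent to $L$ along $L$; thus $\mathfrak n\cdot p\supseteq\nu^{(1)}_p$, in fact $\supseteq\mathfrak u^p_{00}\cdot p\supseteq\nu^{(2)}_p$. I would try to close the argument by iterating the geodesic/affine-Killing computation above along geodesics tangent to the higher osculating distributions $\nu^{(2)},\nu^{(3)},\dots$ of $\nu$, controlling $(\nabla U)$ on the successive new directions by an iterated version of Lemma \ref{abcdef} combined with $[\mathcal K^G(M),\mathfrak u^p_{00}]\subseteq\mathfrak u^p_0$, so that the $G$-invariant, involutive distribution $q\mapsto\mathfrak u^q_{00}\cdot q$ (well defined once constancy along $\nu^{(1)}$-leaves is known) turns out to be autoparallel, hence with totally geodesic leaves along which the same rigidity propagates; identifying this distribution and showing that it forces $\mathrm{Ad}_G$-invariance of $\mathfrak u^p_{00}$ is exactly where the structure theory of \cite{DOV} for Killing fields along nullity must enter. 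Granting this, $\mathfrak u^p_{00}=:\mathfrak u_{00}$ is an ideal independent of $p$.

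Finally, the ``moreover'' follows quickly. For $X\in\mathfrak{tr}^p$ and $U\in\mathfrak u_{00}$, formula (\ref{feqR}) with $(\nabla X)_p=0$ gives $(\nabla[X,U])_p=R_{X_p,U_p}$, which is $0$ since $X_p\in\nu^{(1)}_p$, $U_p\in\mathcal U_p$ and $R_{\nu^{(1)}_p,\mathcal U_p}=0$ by (\ref{trbau}); and $[X,U]_p=(\nabla U)_p(X_p)=0$ because $X_p\in\nu^{(1)}_p$. Thus $[X,U]$ has zero initial conditions at $p$, hence $[X,U]=0$, so $\mathfrak{tr}^p\subseteq\mathfrak z(\mathfrak u_{00})$.
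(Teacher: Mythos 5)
There is a genuine gap, and you have in effect flagged it yourself. Your reduction of both assertions to the $p$-independence of $\mathfrak u^p_{00}$ gains nothing: since $\mathfrak u^{gp}_{00}=\mathrm{Ad}_g\,\mathfrak u^p_{00}$, ``ideal'' and ``independent of $p$'' are the same statement, and that statement is precisely what your argument never establishes. What you actually prove (correctly -- the affine-Killing computation along a geodesic in a leaf of $\nu^{(1)}$ is fine, since $R_{\,\cdot\,,a}b=0$ for $a,b\in\nu^{(1)}$ by Remark \ref{rem-uno} and the definition of $\nu$) is that $\mathfrak u^p_{00}=\mathfrak u^q_{00}$ for $q$ in the integral leaf $L$ of $\nu^{(1)}$ through $p$. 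But $\nu^{(1)}$ is a proper distribution, the set of $g$ with $gp\in L$ is not a subgroup, and constancy along $\nu^{(1)}$-leaves is strictly weaker than $\mathrm{Ad}_G$-invariance. The closing paragraph, where you propose to iterate along $\nu^{(2)},\nu^{(3)},\dots$ and appeal to unspecified structure theory, is exactly the content of the theorem and is left open.

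The frustrating part is that you had already isolated the missing step and all the tools needed to close it at the single point $p$, with no propagation at all. You observe that for $U\in\mathfrak u^p_{00}$, $Z\in\mathcal K^G(M)$ and $W=[U,Z]$ one gets $W\in\mathfrak u^p_0$, and that Lemma \ref{abcdef} gives $(\nabla W)_p(\hat\nu_p)\subset\hat\nu_p$, so it only remains to kill the $\hat\nu_p$-component of $\nabla_rW$ for $r\in\hat\nu_p$. Writing $(U)^p=(u,B)$, $(Z)^p=(z,B')$, formula (\ref{initb}) gives, for $r,s\in\hat\nu_p$,
$$\langle\nabla_rW,s\rangle=\langle R_{u,z}r,s\rangle-\langle B(B'(r)),s\rangle+\langle B'(B(r)),s\rangle .$$
The first term equals $\langle R_{r,s}u,z\rangle=0$ because $R_{\hat\nu_p,\hat\nu_p}=0$ (formula (\ref{adtrbau})); the third vanishes because $B(\hat\nu_p)=0$; and the second equals $\langle B'(r),B(s)\rangle=0$ for the same reason. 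This two-line computation is the paper's proof of the ideal property, after which $p$-independence and the inclusion $\mathfrak{tr}^p\subset\mathfrak z(\mathfrak u_{00})$ follow as you describe; your verification of the ``moreover'' part is correct and coincides with the paper's.
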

\begin{proof}
Let $Z\in \mathcal K ^G(M)$ be arbitrary with initial conditions
$(Z)^p = (z, B')$ and let $U\in\mathfrak u^p_{00}$,  with initial conditions $(U)^p=(u,B)$. Set $W = [U,Z]$. Let us see first that $W\in \mathfrak u^p_0$.

From (\ref{initb}),
the initial conditions of $W$ at $p$ are
$$(W)^p = (B'(u) - B(z), R_{u,z} - (BB' -B'B)).$$
Since $U\in \mathfrak u^p_{00}$ one has that $B(\nu_p)=B(\hat{\nu}_p)=0$.  On the other hand, from the definition of $\hat{\nu}$ on gets that $B'(\nu_p)\subset \hat{\nu}_p$, hence $B(B'(\nu_p))=0$.

So, if  $v\in \nu _p$ is arbitrary, then$$\nabla _vW =  R_{u,z}v - B(B'(v)) +B'(B(v))= -B(B'(v)) = 0.$$
Then $W\in \mathfrak u ^p_0$. So we need to prove that $\nabla_{\hat{\nu}_p} W=0$.

Let now $r\in \hat {\nu}_p$. From Lemma \ref {abcdef} one has that
$\nabla _rW\in \hat {\nu}_p$. So, in order to show that $\nabla _rW=0$, we only have to show
that the projection of $\nabla _rW$ to $\hat {\nu}_p$ is zero. Let $s\in \hat {\nu}_p$ be arbitrary. Then
$$ \langle \nabla _rW,s\rangle = \langle R_{u,z}r,s\rangle -
\langle B(B'(r)),s\rangle  +\langle B'(B(r)), s\rangle.$$
Since $r, s\in \hat{\nu}_p$, by (\ref{adtrbau}) $R_{r,s}=0$.  Hence $\langle R_{u,z}r,s\rangle =\langle R_{r,s}u,z\rangle=0.$

On the other hand, recall that $B (\hat{\nu}_p)=0$ hence  $B(r)=B(s)=0$ and then
$$ \langle \nabla _rW,s\rangle =  -
\langle B(B'(r)),s\rangle =\langle B'(r),B(s)\rangle=0.$$
Then $W\in \mathfrak u ^p_{00}$ and thus  $\mathfrak u ^p_{00}$ is an ideal of
$\mathcal K^G(M)$.

Being $\mathfrak u^p_{00}$ an ideal it does not depend on $p\in M$, since
$\mathfrak u^{gp}_{00} = \mathrm {Ad}(g) \mathfrak u^p_{00} $.

By making use of (\ref {initb}), (\ref{trbau}) and (\ref{001}) one obtains that
$\mathfrak {tr}^p \subset \mathfrak z ( \mathfrak u^p_{00})$ for all $p\in M$.
\end{proof}

\begin{remark}\label{remcen} Observe that the center of an ideal of a Lie algebra $\mathfrak g$ is an abelian ideal of
$\mathfrak g$. Then $\mathfrak z ( \mathfrak u_{00})$ is an abelian ideal of $\mathcal K^G(M)$.
\end{remark}

\section {Transvections in the directions of the nullity}

 All throughout this section   $M=G/H$ will be a homogeneous Riemannian manifolds with nullity distribution $\nu$ and curvature tensor $R$.

\begin{lemma} \label {util}  Let $p\in M$, $v\in \nu _p$. Then $\nabla _v R=0$.
\end{lemma}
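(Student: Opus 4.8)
The plan is to show that the covariant derivative of the curvature tensor vanishes in nullity directions by exploiting the fact that the nullity leaves are flat, totally geodesic submanifolds along which parallel transport preserves the relevant tensorial data. Concretely, fix $p\in M$ and $v\in\nu_p$, and let $\gamma_v(t)$ be the geodesic with $\gamma_v(0)=p$, $\gamma_v'(0)=v$; since $\nu$ is autoparallel, $\gamma_v'(t)\in\nu_{\gamma_v(t)}$ for all $t$. Let $\tau_t$ denote parallel transport along $\gamma_v$ from $0$ to $t$. The key point is that the pulled-back curvature operator $\tau_t^{-1}\circ R_{\gamma_v(t)}\circ\tau_t$, viewed as a tensor at $p$, has the property that contracting it with $\gamma_v'(t)$ (equivalently, with $\tau_t(v)$) gives zero, because $\gamma_v'(t)$ lies in the nullity at $\gamma_v(t)$ and $R$ annihilates nullity directions.

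The main step is to differentiate this at $t=0$. By definition, $(\nabla_v R)(x,y,z) = \left.\frac{d}{dt}\right|_0 \tau_t^{-1}\left(R_{\tau_t x,\tau_t y}\tau_t z\right)$ for $x,y,z\in T_pM$. So it suffices to show that the curve $t\mapsto \tau_t^{-1}\bigl(R_{\gamma_v(t)}\bigr)$ in the tensor space at $p$ is actually constant, or at least has vanishing derivative at $t=0$. Here I would use the autoparallelism of $\nu^{(1)}$ and $\mathcal U$, or more directly argue as follows: by homogeneity $\dim\nu$ is constant, and one can invoke Lemma \ref{ABC}(ii) — applied to Killing fields $Z$ — which says $\nabla Z$ is parallel along $\gamma_v$. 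Using the formula (\ref{feqR}), $R_{X_p,Y_p} = (\nabla[X,Y])_p + [(\nabla X)_p,(\nabla Y)_p]$, and Lemma \ref{ABC}(i), which controls $Z_{\gamma_v(t)}$, one can express $R_{\gamma_v(t)}$ in terms of quantities whose parallel transport back to $p$ is manifestly $t$-independent: both $(\nabla Z)_{\gamma_v(t)} = \tau_t\circ(\nabla Z)_p\circ\tau_t^{-1}$ and the bracket structure are preserved. Since the Killing fields span the tangent space at every point (homogeneity), this determines $R_{\gamma_v(t)}$ completely, and one reads off that $\tau_t^{-1}(R_{\gamma_v(t)}) = R_p$ for all $t$, hence $\nabla_v R = 0$.

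An alternative, perhaps cleaner, route is purely intrinsic: since the integral manifold $N(p)$ of $\nu$ through $p$ is totally geodesic and intrinsically flat, the second fundamental form vanishes and the ambient curvature restricted to directions tangent to $N(p)$ is zero; then a Codazzi-type computation (differentiating $R_{x,y}v=0$ for $v$ tangent to $\nu$) combined with the second Bianchi identity $\nabla_v R_{x,y} + \nabla_x R_{y,v} + \nabla_y R_{v,x} = 0$ forces $\nabla_v R_{x,y} = 0$ once one knows $\nabla_x R_{y,v}$ and $\nabla_y R_{v,x}$ already vanish — which follows because $R_{\cdot,\cdot}v \equiv 0$ as a field on $N(p)$, so its covariant derivative in nullity directions is zero, and autoparallelism ensures the derivative of $v$ stays in $\nu$. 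I expect the main obstacle to be bookkeeping the difference between "$R$ applied to $v$ vanishes" and "the derivative of the $R$-tensor in the direction $v$ vanishes": these are genuinely different, and the argument must use that $v$ is a \emph{nullity} direction (not just that $R$ kills it pointwise) together with autoparallelism of $\nu$, so that moving along $\gamma_v$ keeps everything inside the flat leaf. I would lead with the second-Bianchi argument since it is short, and fall back to the Killing-field/parallel-transport computation if a subtlety arises.
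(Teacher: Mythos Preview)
Both of your proposed routes have genuine gaps.

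\medskip

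\textbf{The Killing-field approach.} You are right that $(\nabla Z)_{\gamma_v(t)}$ is parallel along $\gamma_v$ for every Killing field $Z$ (Lemma~\ref{ABC}(ii)), so the right-hand side of (\ref{feqR}) along $\gamma_v(t)$ is indeed $\tau_t$-conjugate to its value at $p$. The problem is on the left-hand side: it is $R$ evaluated at the vectors $X_{\gamma_v(t)},Y_{\gamma_v(t)}$, and by Lemma~\ref{ABC}(i) these are \emph{not} the parallel transports of $X_p,Y_p$; they carry a linear drift $t\,\tau_t(\nabla_vX)$. Writing $\tilde R(t)=\tau_t^{-1}(R_{\gamma_v(t)})$, what (\ref{feqR}) gives you is
\[
\tilde R(t)_{\,u+tu',\,w+tw'} \;=\; (R_p)_{u,w}\qquad (u=X_p,\ u'=\nabla_vX,\ \text{etc.}),
\]
which is a constraint relating $\tilde R(t)$ on \emph{moving} arguments to $R_p$ on fixed ones --- not the identity $\tilde R(t)=R_p$. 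Extracting the latter from the former is exactly the nontrivial content of the lemma. The paper's proof handles precisely these $t$ and $t^2$ cross-terms: it uses that $\gamma_v$ is a \emph{homogeneous} geodesic, so $\tau_t=\mathrm d\phi_t\circ e^{-tB}$ with $B=(\nabla X)_p$; then homogeneity makes all curvature norms bounded, which forces the polynomial-in-$t$ terms to vanish via a limiting argument along a sequence $t_n\to+\infty$ with $e^{-t_nB}\to e^{-tB}$ (Remark~\ref{accm}). None of this machinery can be bypassed by the formula alone.

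\medskip

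\textbf{The second-Bianchi approach.} In
\[
(\nabla_vR)_{x,y}+(\nabla_xR)_{y,v}+(\nabla_yR)_{v,x}=0,
\]
the last two terms involve covariant derivatives of $R$ in \emph{arbitrary} directions $x,y$, not nullity directions. If $V$ is a local section of $\nu$ with $V_p=v$, a direct computation gives $(\nabla_xR)_{y,v}z=-R_{y,(\nabla_xV)_p}z$, and $(\nabla_xV)_p$ need not lie in $\nu_p$ --- indeed, the whole point of $\hat\nu_p\neq\{0\}$ is that such derivatives escape $\nu$. So these terms do not vanish a priori, and the Bianchi identity reduces $\nabla_vR=0$ to the statement that the map $x\mapsto(\nabla_xV)_p$ acts as a derivation annihilating $R_p$. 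That is precisely the content of Remark~\ref{2343}, which in the paper is deduced \emph{from} Lemma~\ref{util}, not the other way around. Your proposed Bianchi argument is therefore circular.

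\medskip

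The essential missing idea in both sketches is the dynamical/boundedness input coming from homogeneity: one must exploit that the nullity geodesic is an orbit of a one-parameter group of isometries, compare parallel transport with the isometric flow via $e^{-tB}$, and use a compactness-in-$SO(T_pM)$ argument to kill the unbounded terms.
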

\begin{proof}
 Since the first terms of both the equations in \textit{i)} and \textit{ii)} are linear in $v$, it is clear that we only need to prove them for $v$ in a set of generators of $\nu_p$. Now each leaf of $\nu$ is a homogeneous, flat totally geodesic submanifold of $M$ (see \cite[Paragraph 3.2]{DOV}). Hence $\nu_p$ is generated by the direction of homogeneous geodesics tangent to $\nu$, which are given by the flow of some Killing field in $\mathcal{K}^G(M)$ (cf. \cite[Paragraph 2.5]{DOV}).
	
	 So let $v\in\nu_p$ be such that there exist a homogeneous geodesic $\gamma_v(t)$  tangent to $\nu$ with $\gamma_v(0)=p$ and $\gamma'_v(0)=v$ and let $X\in \mathcal{K}^G(M)$ be such that $$\gamma_v(t)=\phi_t(p)$$ where $\phi_t$ is the flow associated to $X$.

Let $\tau_t$ be the parallel transport along $\gamma_v(t)$ from $0$ to $t$.
By Lemma \ref{ABC}, $$Z_{\gamma _v (t)} =  \tau _t (Z_p) + t \tau _t(\nabla _v Z)$$ and $\nabla _{\gamma_v'(t)} (\nabla Z)  =0$ and the same is true if one replaces  $Z$ by $Y$. From (\ref {initb}) one has that
$$(\nabla [Y,Z])_{\gamma _v(t)} =  R_{Y_{\gamma _v(t)}, Z_{\gamma _v(t)}} - [(\nabla Y)_{\gamma _v(t)} ,
(\nabla Z)_{\gamma _v(t)} ]. $$
Observe  that $C(t) = [(\nabla Y)_{\gamma _v(t)} ,
(\nabla Z)_{\gamma _v(t)} ]$ is a parallel skew symmetric $(1,1)$ tensor along $\gamma_v(t)$
since both terms of the bracket are so. Then $$C(t)= \tau _t(C(0)) = \tau _t\circ C(0)\circ \tau _t^{-1}.$$
Set $u= Y_p$, $u'= \nabla _vY$, $w= Z_p$, $w'=\nabla _vZ$.
Then,
\begin{equation} \label {555}
\begin{split}
(\nabla [Y,Z])_{\gamma _v(t)} = &\
 R_{\tau _t (u),\tau _t (w)}
+ t (R_{\tau _t (u),\tau _t (w')} + R_{\tau _t (u'), \tau _t (w)})
\\ &+ t^2 R_{\tau _t (u'), \tau _t (w')} {\red -} C(t)
\end{split}
\end{equation}

By (\ref{etnabla2}) one has that
\begin{equation} \label {5554} \tau _t   = \mathrm {d} \phi _t \circ g_t \, ,\end{equation}
 where $g_t: = \mathrm {e}^{-t(\nabla X)_p}$ is a one parameter group of linear isometries of $T_pM$. So, since $\phi _t$ preserves the curvature,

\begin{equation} \label {556}
\begin{split}
& (\nabla [Y,Z])_{\gamma _v(t)} -
 R_{\tau _t (u),\tau _t (w)} =  \\
 & \mathrm {d} \phi _t  \bigg( t (R_{g_t(u),g_t(w')} +  R_{g_t(u'),g_t(w)}) +
 t^2 R_{g_t(u'), g_t(w')}\bigg) {\red -} \tau _t(C(0)))  \\
& = \mathrm {d} \phi _t \circ \bigg( t (R_{g_t(u),g_t(w')} +  R_{g_t(u'),g_t(w)}) +
 t^2 R_{g_t(u'), g_t(w')}\bigg) \circ  \mathrm {d} \phi _t^{-1} {\red -} \\
 &  \tau _t \circ  C(0)\circ \tau _t^{-1}
\end{split}
\end{equation}

Observe that, since $M$ is homogeneous,  $\Vert R_{x,y}\Vert $ is bounded if
$\Vert x\Vert $ and $\Vert y\Vert $ are bounded.
In addition, recall that the covariant derivative  of any Killing field is parallel along the nullity (cf. Lemma \ref{ABC}). Hence $(\nabla [Y,Z])_{\gamma _v(t)}$
is parallel and thus the norm of the left hand side of
(\ref{556}) is bounded.

On the other hand, $\Vert  \tau _t \circ  C(0)\circ \tau _t^{-1}\Vert$ is also bounded and then
\begin {equation}\label {4444}\alpha (t) = \Vert  t (R_{g_t(u),g_t(w')} +  R_{g_t(u'),g_t(w)}) +
 t^2 R_{g_t(u'), g_t(w')}\Vert
\end{equation}
must be bounded.

Fix an arbitrary $t$ and choose a sequence $t_n\to +\infty$ such that
$g_{t_n}\to g_t$ (see Remark \ref {accm})  and put
$$R_n= (R_{g_{t_n}(u),g_{t_n}(w')} +  R_{g_{t_n}(u'),g_{t_n}(w)}). $$
Since $\frac{\alpha(t_n)}{t_n}\to 0$, $\Vert R_n\Vert$ and $\Vert R_{g_{t_n}(u'), g_{t_n}(w')}\Vert$ are bounded, and
$$\frac{\alpha(t_n)}{t_n}\geq \left|t_n\Vert R_{g_{t_n}(u'), g_{t_n}(w')}\Vert -\Vert R_n\Vert\right|$$
we must have that both $t_n\Vert R_{g_{t_n}(u'), g_{t_n}(w')}\Vert$ and  $\Vert R_n\Vert$ tend to $0$. In particular, $$0=\lim_{n\to \infty}R_n=R_{g_t(u),g_t(w')} +  R_{g_t(u'),g_t(w)}$$
and
$$0=\lim_{n\to \infty}R_{g_{t_n}(u'), g_{t_n}(w')}=R_{g_{t}(u'), g_{t}(w')}.$$

Then, from (\ref{555}), $$  R_{\tau _t (u),\tau _t (w)}=(\nabla [Y,Z])_{\gamma _v(t)} +C(t)
 $$
Since $(\nabla [Y,Z])_{\gamma _v(t)}$ and $C(t)$ are parallel along $\gamma_v(t)$  one concludes that
$R$ is parallel along $\gamma (t)$ and so $\nabla_v R=0$.
\end{proof}


\begin{rem}\label{2343}
Let $p\in M$ and  let $X$ be a Killing field. Let $B=(\nabla X)_p$ and let $T$ be a tensor. Since the Levi-Civita connection is torsion free one has that
$\nabla_{X_p} T -(\mathcal L _XT)_p = B.T_p$,
where $\mathcal L _X$ is the Lie derivative along $X$ and $B$ acts as a derivation on the algebraic tensor $T_p$. If $T$ is invariant under isometries, or more generally under the flow of $X$, then $\mathcal L _XT= 0$. Hence
$\nabla_{X_p} T  = B.T _p$.
In particular, if $T=R$ is the curvature tensor, then
$\nabla_{X_p} R  = B.R_p$ (cf. \cite [Section 2]{CO}).
Assume now that $M$ is homogeneous with nullity $\nu$  and that $X_p\in \nu _p$.
Then,  by  Lemma \ref {util},
$B.R_p=0$ or, equivalently,
$\mathrm {e}^{tB}(R_p) = R_p$. \qed
\end{rem}
\vspace {.25cm}

We will now introduce some notation that we will keep for the rest of this section.

First, recall that one can identify any   Killing field Y with its initial conditions $(Y) ^p = (Y_p, (\nabla Y)_p)$ at $p$. Under this identification the Lie bracket is given by  (\ref{initb})  and one can think of $\mathcal{K}^G(M)$ as a linear subspace of $T_pM\oplus \mathfrak {so}(T_pM)$.

 On the other hand, recall that if  $g$ be an isometry of $M$ and  $Y$ is a Killing field of $M$, then $g_*(Y)$ is also a Killing
 field, where $(g_*(Y))_q := \mathrm {d}g (Y_{g^{-1}(q)})$.

 Let now $\gamma _v(t)$ be a homogeneous geodesic with $v\in \nu _p$ and let $X\in \mathcal K^G(M)$ be such that
$\gamma _v (t)= \phi _t (p)$, where $\phi _t$ is the flow associated to $X$.

 Let us consider, for   $Z \simeq (u, C) $ in  $\mathcal K ^G(M)\subset T_pM\oplus \mathfrak {so}(T_pM)$, the one parameter family  $Z^t\in \mathcal K ^G(M)$ given by
$$Z^t := (\phi _{-t})_*(Z). $$
The map $Z\to Z^t$ is a one parameter group of automorphisms of the Lie algebra
$\mathcal K ^G(M)$ and, as it is well-known $\frac {\mathrm d\,}{\mathrm dt}_{\vert 0}Z^t =
[X, Z].$

From (\ref {etnabla}) and Lemma \ref {ABC} one has that, if $B=(\nabla X)_p$, then
\begin {equation} \label {23333}
(Z^t)^p = (\mathrm {e}^{-tB}u + t \mathrm {e}^{-tB}Cv, \mathrm {e}^{-tB}C\mathrm {e}^{tB}).
\end{equation}

The following result is crucial for our purposes.

\begin {lemma} \label {mainLe} We keep the previous notation and assumptions.
 Let $$h_t: T_pM\oplus \mathfrak {so}(T_pM)
\to T_pM\oplus \mathfrak {so}(T_pM)$$ be the one parameter group of linear isomorphisms
given by $$h_t(u,C) = (\mathrm {e}^{-tB}u , \mathrm {e}^{-tB}C\mathrm {e}^{tB}).$$ Then $h_t (\mathcal K^G(M)) \subset  \mathcal K^G(M)$ and
$h_t: \mathcal K^G(M) \to \mathcal K^G(M)$ is a Lie algebra automorphism, where
$\mathcal K^G(M)$ is identified with a linear subspace of  $T_pM\oplus \mathfrak {so}(T_pM)$. Moreover, $h_t (\mathcal K_p^G(M))\subset \mathcal K_p^G(M)$ where $\mathcal K_p^G(M)$ is the isotropy algebra at $p$.
\end {lemma}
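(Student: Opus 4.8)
The plan is to realize the abstract map $h_t$ as an honest geometric operation on Killing fields, so that the Lie-algebra automorphism property comes for free. The key observation is that $h_t$ and the map $Z \mapsto Z^t = (\phi_{-t})_*(Z)$ differ only in the ``translation'' term: comparing the formula for $h_t(u,C)$ in the statement with (\ref{23333}), we see that if $(Z)^p = (u,C)$ then $(Z^t)^p = h_t(u,C) + (t\,\mathrm e^{-tB}Cv,\,0)$. So $h_t(Z)$, if it is a Killing field at all, must be $Z^t$ minus a transvection at $p$ whose initial velocity is $t\,\mathrm e^{-tB}Cv$. Thus the whole lemma reduces to two points: (a) for every $Z\in\mathcal K^G(M)$ the vector $t\,\mathrm e^{-tB}Cv = t\,\mathrm e^{-tB}(\nabla Z)_p v = t\,\mathrm e^{-tB}(\nabla_v Z)$ lies in $\hat\nu_p \subset \nu^{(1)}_p$, and (b) for each such vector there is a transvection in $\mathcal K^G(M)$ realizing it at $p$. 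But (b) is exactly Lemma \ref{propsosculating}(2): since $\nabla_v Z\in\hat\nu_p$ and $\hat\nu$ is $G$-invariant, $\mathrm e^{-tB}(\nabla_v Z)$ — which is the parallel transport image $\mathrm d\phi_t^{-1}\tau_t(\nabla_v Z)$ composed appropriately, hence still in $\hat\nu_p$ by Lemma \ref{ABC}(ii) and $G$-invariance — admits an adapted transvection $T^t_Z\in\mathcal K^G(M)$ with $(T^t_Z)^p = (t\,\mathrm e^{-tB}(\nabla_v Z),\,0)$. Therefore
$$h_t(Z) := Z^t - T^t_Z \in \mathcal K^G(M),$$
which proves $h_t(\mathcal K^G(M))\subset\mathcal K^G(M)$.

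Next I would check $h_t$ is a Lie algebra homomorphism. The map $Z\mapsto Z^t$ is an automorphism (it is $(\phi_{-t})_*$). The transvections $\{T^t_Z\}$ span a subspace of the abelian algebra $\mathfrak{tr}^p$ (they are transvections at $p$ with value in $\nu^{(1)}_p$), and by (\ref{trbau}) and (\ref{initb}) the bracket $[T^t_Z, W]$ for any $W\in\mathcal K^G(M)$ is again a transvection at $p$ with value in $\hat\nu_p$ — indeed $([T^t_Z,W])^p = ((\nabla W)_p(t\mathrm e^{-tB}\nabla_v Z) - 0,\, R_{t\mathrm e^{-tB}\nabla_v Z,\,W_p} - 0) = ((\nabla W)_p(\cdots),\,0)$ using $R_{\,\cdot\,,\,\hat\nu_p}\hat\nu_p = R_{\,\cdot\,,\,\mathfrak{tr}^p}(\cdot)=0$ from Remark \ref{rem-uno} and (\ref{adtrbau}). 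A short computation with (\ref{initb}) then shows that the ``correction'' $Z\mapsto T^t_Z$ behaves like a $1$-cocycle in exactly the way needed so that $h_t[Z,W] = [h_t Z, h_t W]$; concretely, one verifies both sides have the same initial conditions at $p$ by expanding (\ref{initb}) and using that the translation parts produced by the corrections all lie in the abelian, curvature-annihilating subspace $\mathfrak{tr}^p$. Since $h_t$ is visibly invertible (its inverse is $h_{-t}$, built analogously), it is an automorphism.

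Finally, for the isotropy statement: $Z\in\mathcal K^G_p(M)$ means $Z_p = 0$, i.e.\ $(Z)^p = (0, C)$. Then $\nabla_v Z = Cv$, and the correction term $t\mathrm e^{-tB}Cv$ need not vanish, so one must be slightly careful — but in fact when $Z_p=0$ the field $Z^t = (\phi_{-t})_*Z$ has value at $p$ equal to $\mathrm d\phi_{-t}(Z_{\phi_t(p)}) = \mathrm d\phi_{-t}(t\,\tau_t(\nabla_v Z))$ by Lemma \ref{ABC}(i), which is exactly $t\,\mathrm e^{-tB}Cv$ by (\ref{etnabla}); so $h_t(Z) = Z^t - T^t_Z$ has value $0$ at $p$, i.e.\ $h_t(Z)\in\mathcal K^G_p(M)$. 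I expect the main obstacle to be the cocycle bookkeeping in the homomorphism step: one has to track how the transvection corrections interact under (\ref{initb}), and the argument only closes because $\mathfrak{tr}^p$ is abelian and kills the relevant curvature terms (Remark \ref{rem-uno}, (\ref{trbau}), (\ref{adtrbau})). Everything else is a matter of matching initial conditions at $p$ and invoking Lemma \ref{propsosculating}(2) for the existence of the adapted transvections that define $h_t$ in the first place.
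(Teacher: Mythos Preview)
Your decomposition $h_t(Z) = Z^t - T^t_Z$ and the argument that $T^t_Z\in\mathcal K^G(M)$ are correct; your route via Lemma~\ref{propsosculating}(2) (showing $\mathrm e^{-tB}(\nabla_v Z)\in\hat\nu_p$ and then invoking the existence of adapted transvections) is a legitimate alternative to the paper's limit argument $\tfrac{1}{t_k}(Z^{t_k})^p\to(\mathrm e^{-tB}Cv,0)$. The isotropy statement is also fine.

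The gap is in the homomorphism step. Your claim that $[T^t_Z,W]$ is a transvection at $p$ for every $W\in\mathcal K^G(M)$, i.e.\ that $R_{\,t\mathrm e^{-tB}\nabla_v Z,\;W_p}=0$, is false. The results you cite give only $R_{\hat\nu_p,\hat\nu_p}=0$ (equation~(\ref{adtrbau})), $R_{\,\cdot\,,\hat\nu_p}\hat\nu_p=0$ (Remark~\ref{rem-uno}), and $R_{\mathfrak{tr}^p.p,\,\mathfrak u^p.p}=0$ (equation~(\ref{trbau})); none of these yields $R_{\hat\nu_p,\,T_pM}=0$, which would in fact force $\hat\nu_p\subset\nu_p$. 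If you carry the cocycle computation through honestly, the curvature pieces of $[Y^t,T^t_Z]$ and $[Z^t,T^t_Y]$ do not vanish separately; what survives after subtraction is (up to conjugation by $\mathrm e^{-tB}$) the term $R_{Y_p,\,\nabla_vZ}+R_{\nabla_vY,\,Z_p}$, and killing this requires precisely $\nabla_vR=0$ (Lemma~\ref{util}) and its consequence $\mathrm e^{-tB}\cdot R_p=R_p$ (Remark~\ref{2343}), neither of which you invoke.

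The paper avoids the cocycle bookkeeping entirely: it verifies $[h_t(Y),h_t(Z)]=h_t([Y,Z])$ by a direct computation with~(\ref{initb}), in which the only nontrivial step is $\mathrm e^{-tB}R_{u,u'}\,\mathrm e^{tB}=R_{\mathrm e^{-tB}u,\,\mathrm e^{-tB}u'}$, i.e.\ that $\mathrm e^{-tB}$ preserves $R_p$. That is exactly Remark~\ref{2343}, and it is the ingredient your argument is missing.
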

\begin{proof}
Fix $t\in \mathbb R$ and let $t_k$ be sequence that tends to $+\infty$ and such that
$\mathrm {e}^{-t_kB}$ tends to $\mathrm {e}^{-tB}$ (see Remark \ref {accm}). Let $Z\simeq  (u, C)\in \mathcal K^G(M)$.  Then by (\ref{23333})
$\frac {1}{t_k}(Z^{t_k})^p $ converges to  $ (\mathrm {e}^{-tB}Cv, 0)$. Hence $ (t\mathrm {e}^{-tB}Cv, 0)\in \mathcal K^G(M)$ (observe that this is a transvection at $p$). Then, since $Z^t \in\mathcal K^G(M)$, one obtains that
$$h_t(Z) = Z^t - (t\mathrm {e}^{-tB}Cv, 0)\in \mathcal K^G(M)$$
and hence $h_t (\mathcal K^G(M)) \subset  \mathcal K^G(M)$.
The fact that $[h_t(Y), h_t(Z)]= h_t([Y,Z])$, for any $Y,Z \in \mathcal K^G(M)$,
 follows from a direct computation, using (\ref {initb}) and the fact that
$\mathrm {e}^{-tB}(R_p) = R_p$ (see Lemma \ref{util} and Remark \ref {2343}).

The last assertion is trivial since the isotropy algebra corresponds to those pairs  with first component equals to zero.
\end{proof}

\begin{lemma}\label{main222} We keep the notation and assumptions of this section. Let $X\in \mathcal K^G(M)$ be such that
$\gamma (t)= \phi _t(p)$ is a (homogeneous) geodesic tangent to the the nullity, where $\phi _t$ is the flow associated to $X$.
If $M$ is simply connected, then
there exists a transvection at $p$ with initial conditions $(X_p, 0)$.
\end {lemma}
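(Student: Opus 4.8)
\textbf{Proof strategy for Lemma \ref{main222}.} The plan is to produce the desired transvection as a limit of the one-parameter family $h_t(X)$ of Killing fields supplied by Lemma \ref{mainLe}, after correcting it so that the limit lies in $\mathcal K(M)$ (or an enlarged algebra) rather than merely in a formal completion. Concretely, using the identification of $X$ with $(X_p, B)$ where $B = (\nabla X)_p$, formula (\ref{23333}) shows that $(X^t)^p = (\mathrm e^{-tB}X_p + t\,\mathrm e^{-tB}B X_p,\ \mathrm e^{-tB}B\,\mathrm e^{tB})$. Since $\gamma(t) = \phi_t(p)$ is a geodesic tangent to $\nu$, we have $\gamma'(0) = X_p \in \nu_p$ and $\nabla_{X_p} X = \nabla_{\gamma'(0)}\gamma' = 0$, i.e.\ $B X_p = 0$; hence in fact $(X^t)^p = (\mathrm e^{-tB}X_p,\ \mathrm e^{-tB}B\,\mathrm e^{tB}) = h_t(X)^p$, so $h_t(X) = X^t \in \mathcal K^G(M)$ for all $t$, and $h_t(X)$ has the same value $\mathrm e^{-tB}X_p$ at $p$, while its covariant derivative at $p$ is the conjugate $\mathrm e^{-tB}B\,\mathrm e^{tB}$.

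The key point is then to analyze the behaviour of $\mathrm e^{-tB}B\,\mathrm e^{tB}$ as $t$ ranges over $\mathbb R$. First I would record that $\nu_p$ is $B$-invariant: indeed $\mathrm e^{sB}$ is $\tau_s^{-1}\circ \mathrm d\phi_s$ up to reparametrization by (\ref{etnabla}), $\phi_s$ preserves the nullity distribution (it is an isometry), and $\tau_s$ maps $\nu_p$ to $\nu_{\gamma(s)}$, so $\mathrm e^{sB}\nu_p = \nu_p$ for all $s$; differentiating gives $B\nu_p \subset \nu_p$. Moreover $B X_p = 0$ means $X_p$ lies in the kernel of $B$ restricted to $\nu_p$. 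Now I would use the torus $T = \overline{\{\mathrm e^{tB}\}} \subset SO(T_pM)$ and Remark \ref{accm}: choosing $t_k \to +\infty$ with $\mathrm e^{-t_k B} \to \mathrm{id}$, the conjugates $\mathrm e^{-t_k B}B\,\mathrm e^{t_k B}$ converge to $B$, which is unhelpful directly; the useful direction is instead to exploit the decomposition of $B$ relative to the flat leaf of nullity. The cleaner route, which I expect the authors take, is: since $B \in \mathfrak{so}(T_pM)$ and $\mathrm e^{tB}$ preserves $\nu_p$, write the skew-symmetric $B$ with respect to the orthogonal splitting $T_pM = \nu_p \oplus \nu_p^\perp$; the block structure together with $B X_p = 0$ lets one show that a suitable averaging or limiting procedure over $t$ kills the ``$X_p$-component'' of $B$ while leaving a Killing field whose value at $p$ is still a nonzero multiple of $X_p$ (or, after rescaling, exactly $X_p$).

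More precisely, the step I would carry out is to consider, for the torus $T$ with normalized Haar measure $\mathrm d\mu$, the averaged endomorphism $\bar B := \int_T a B a^{-1}\, \mathrm d\mu(a)$; this is skew-symmetric, commutes with every element of $T$, fixes $X_p$ (since $a X_p = X_p$ for $a \in T$ because $X_p \in \ker B$ and $T$ is generated by $\mathrm e^{tB}$... here one must check $a X_p = X_p$, which follows as $\mathrm e^{tB}X_p = X_p$ for all $t$ by $BX_p=0$). Then the Killing field obtained as the corresponding average (or limit along a sequence $t_k$ realizing a dense winding, via Remark \ref{accm}, so that the average is realized as an honest limit of the $h_{t_k}(X) \in \mathcal K^G(M) \subset \mathcal K(M)$, using that $\mathcal K(M)$ is finite-dimensional and closed) is a Killing field $Y$ with $Y_p = X_p$ and $(\nabla Y)_p = \bar B$. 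Finally, one replaces $Y$ by $Y - Y'$ where $Y'$ is the transvection $(0, \bar B)$ component: but $\bar B$ need not vanish. The actual finish is to iterate — or better, to observe that on the orthogonal complement of the span of the $T$-orbit structure one can peel off $\bar B$ by a further transvection whose existence is guaranteed because its initial velocity lies in $\hat\nu_p \subset \nu^{(1)}_p$ and Lemma \ref{propsosculating}(2) provides adapted transvections there. \textbf{The main obstacle} I anticipate is precisely this last bookkeeping: showing that the ``leftover'' $\bar B$ can be realized as $(\nabla Z)_p$ for some $Z \in \mathcal K^G(M)$ with $Z_p = 0$, so that $Y - Z$ is the sought transvection in $\mathcal K(M)$ with initial conditions $(X_p, 0)$; this is where simple connectedness of $M$ is needed, to guarantee that a Killing field with prescribed consistent initial data at $p$ actually integrates globally, and to rule out holonomy obstructions to the averaging construction. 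I would keep the averaging/limit argument (Remark \ref{accm} plus compactness of $T$ plus closedness of $\mathcal K(M)$) as the technical core, and treat the identification ``$\bar B \in (\nabla \mathcal K^G(M))_p$ modulo $\ker$'' as the delicate consistency check.
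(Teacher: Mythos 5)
There is a genuine gap, and it sits exactly where you flagged it. Your opening computation is correct: since $\gamma$ is a geodesic, $BX_p=\nabla_{X_p}X=0$, so $h_t(X)=X^t$ and the family $h_{t}(X)$ keeps the value $X_p$ at $p$ while conjugating $B$. But the averaging/limiting strategy cannot close the argument, for a structural reason: every object you produce (limits of $h_{t_k}(X)$, the Haar average over the torus $T$) stays inside $\mathcal K^G(M)$, which is a finite-dimensional \emph{closed} subalgebra of $\mathcal K(M)$. The whole content of the lemma is that the transvection $(X_p,0)$ may \emph{not} lie in $\mathcal K^G(M)$ — one must genuinely enlarge the algebra — so no limit of elements of $\mathcal K^G(M)$ can reach it in general. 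Moreover, your averaged $\bar B$ commutes with $T$ but has no reason to vanish or to be the second initial condition of an isotropy element of $\mathcal K^G(M)$; and your proposed fallback of absorbing $\bar B$ via adapted transvections from Lemma \ref{propsosculating}(2) cannot work, because transvections by definition have \emph{zero} covariant derivative at $p$: subtracting one changes the value $Y_p$, not $(\nabla Y)_p$.

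The missing idea in the paper's proof is to produce a \emph{new} isometry of $M$ out of the automorphism $h_t$, rather than a new element of $\mathcal K^G(M)$. Taking $G$ simply connected, the one-parameter group of Lie algebra automorphisms $h_t$ of $\mathfrak g$ integrates to a one-parameter group of automorphisms $L_t$ of $G$ with $\mathrm d_eL_t=h_t$; since $M$ is simply connected the isotropy $H=G_p$ is connected, and because $h_t$ preserves the isotropy algebra (last assertion of Lemma \ref{mainLe}) one gets $L_t(H)=H$, so $L_t$ descends to a diffeomorphism $\ell_t$ of $M=G/H$ fixing $p$. A direct computation gives $\mathrm d_p\ell_t=\mathrm e^{-tB}$, and the equivariance $\ell_t=m_{L_t(g)}\circ\ell_t\circ m_{g^{-1}}$ propagates the isometry property of the differential to every point, so $\ell_t$ is an isometry. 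Its Killing field is $Y=(0,-B)$ — generally not in $\mathcal K^G(M)$ — and $U=X+Y$ is the desired transvection with initial conditions $(X_p,0)$. This is where simple connectedness actually enters (integrating $h_t$ on $G$ and forcing $H$ to be connected), not in ruling out holonomy obstructions to an averaging construction.
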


\begin{proof} We may assume that $G$ is simply connected. In this case the action of $G$ is not necessarily  effective, but almost effective.  Observe that, since $M$ is simply connected, then  $H:=G_p$ must be connected.  Then the  one parameter group of Lie algebra automorphisms $h_t$ induces a one parameter group of automorphisms
$L_t$ of $G$, with $\mathrm {d}_eL_t = h_t$. Moreover, $L_t(H)=H$. Then $L_t$ induces a  one parameter group of diffeomorphisms $\ell _t$ of $M=G/H$ given by
$$\ell _t (gH) = L_t(g)H. $$
Observe that $\ell _t(p)=p$ for all $t\in\mathbb R$. Let $Z=(u,C)\in \mathcal K^G(M)\simeq \mathfrak g$. Then
\begin{equation}\label {for22} \begin{split}
\mathrm {d}_p\ell _t  (u)&=\mathrm {d}_p\ell _t (Z.p) =
\tfrac {\mathrm {d}\,}{\mathrm {d}s}_0\ell_t (\mathrm {Exp} (sZ)p) \\
&=  \tfrac {\mathrm {d}\,}{\mathrm {d}s}_0 L_t(\mathrm{Exp}(sZ))p  =       h_t(Z).p = \mathrm {e}^{-tB}u.
\end{split}
\end{equation}
We conclude that $\mathrm {d}_p\ell _t$ is a linear isometry for all $t\in \mathbb R$. Note that, for all $g\in G$,
$$\ell _t (x) = L_t(g) \ell _t (g^{-1}x)$$ or, equivalently,
$$\ell _t =  {m}_{L_t(g)}\circ \ell _t \circ
 {m}_{g^{-1}}, $$ where $ {m}_r(x) = rx$, $r\in G$,
$x\in M$. Then
$$\mathrm {d}_{gp} \ell _t = \mathrm {d}_p {m}_{L_t(g)}\circ \mathrm {d}_p\ell _t \circ
\mathrm {d}_{gp} m_{g^{-1}}. $$
So $\mathrm {d}_{gp}\ell _t$ is a linear isometry. Since $g\in G$ is arbitrary, then
$\ell _t$ is an isometry that fixes $p$.

From (\ref {for22}) we obtain that the Killing field  associated to the one parameter group of isometries $\ell _t$ is $Y = (0,-B)$.
Then $U= X + Y$ is the transvection with initial conditions $(X_p, 0)$.
\end{proof}

\begin{cor}\label{mainC2} Let $M=G/H$ be a simply connected homogeneous Riemannian manifold with a non-trivial nullity distribution $\nu$. Then, for all $p\in M$,
$v\in \nu _p$ there exists a transvection $Y$ such that $Y_p=v$ (it could happen that  $Y \notin \mathcal K^G(M)$).
\end{cor}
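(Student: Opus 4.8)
The plan is to reduce the statement to Lemma \ref{main222}, which already produces a transvection at $p$ with prescribed initial value, but only for those $v\in\nu_p$ that arise as the initial velocity of a \emph{homogeneous} geodesic tangent to the nullity — i.e.\ a geodesic of the form $\gamma_v(t)=\phi_t(p)$ for the flow $\phi_t$ of some $X\in\mathcal K^G(M)$. So the first step is to recall, exactly as in the proof of Lemma \ref{util}, that each leaf of $\nu$ is a homogeneous, flat, totally geodesic submanifold of $M$ (cf.\ \cite[Paragraph 3.2]{DOV}), and that consequently $\nu_p$ is spanned by initial velocities of homogeneous geodesics tangent to $\nu$ (cf.\ \cite[Paragraph 2.5]{DOV}). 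Call this spanning set $S\subset\nu_p$. By Lemma \ref{main222}, for each $v\in S$ there is a transvection $Y^v\in\mathcal K(M)$ at $p$ with $Y^v_p=v$.

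The second step is to pass from the spanning set $S$ to all of $\nu_p$ by linearity. Fix $v\in\nu_p$ arbitrary and write $v=\sum_{i=1}^m c_i v_i$ with $v_i\in S$, $c_i\in\mathbb R$. Set $Y:=\sum_{i=1}^m c_i Y^{v_i}\in\mathcal K(M)$. Since the map $Z\mapsto (Z)^p=(Z_p,(\nabla Z)_p)\in T_pM\oplus\mathfrak{so}(T_pM)$ is linear, the initial conditions of $Y$ at $p$ are $(Y)^p=\big(\sum_i c_i v_i,\ \sum_i c_i\cdot 0\big)=(v,0)$; that is, $Y$ is a transvection at $p$ with $Y_p=v$. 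This establishes part (1) of Theorem \ref{dakl} as well, and in particular proves the Corollary. (One should note that even though each $Y^{v_i}$ might fail to lie in $\mathcal K^G(M)$, the linear combination $Y$ certainly lies in $\mathcal K(M)=\mathrm{Lie}(I(M))$, which is all that is claimed.)

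I expect no serious obstacle here: the substantive work has already been done in Lemmas \ref{util}, \ref{mainLe} and \ref{main222}. The only point requiring a word of care is the hypothesis of Lemma \ref{main222}, which asks that $\gamma(t)=\phi_t(p)$ be a homogeneous geodesic \emph{tangent to the nullity}; this is guaranteed precisely for $v\in S$ by the homogeneity and flatness of the leaves of $\nu$, so the reduction to a spanning set is legitimate. It is also worth observing that one cannot in general expect $Y\in\mathcal K^G(M)$: the transvections $Y^{v_i}$ are built in Lemma \ref{main222} by enlarging the isometry group via the automorphisms $\ell_t$, and Remark \ref{rem890} already flagged that such transvections need not be induced by $G$. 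Hence the parenthetical caveat in the statement is unavoidable, and no strengthening is attempted.
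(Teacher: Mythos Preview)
Your proof is correct and follows exactly the same approach as the paper: the paper's proof reads simply ``It follows from Lemma \ref{main222} and the fact that the directions of homogeneous geodesics span $\nu_p$,'' and you have just spelled out the linearity argument that makes this work.
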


\begin{proof} It follows from Lemma \ref {main222} and the fact that the directions of homogeneous geodesics span $\nu _p$.
\end{proof}

\section {The abelian ideal generated by the transvections} \label {aigt}

Let $M=G/H$  be a locally irreducible Riemannian  homogeneous manifold with a non-trivial nullity.  Let
\begin{equation}\label {eq:trp2}
    \mathfrak {tr}_0^p := \{
    X\in \mathcal K^G(M): X \textrm{ is a transvection at p with } X_p\in \nu_p\}
\end{equation}
(compare with (\ref{eq:trp1})) and let
\begin{equation} \label {eq:trp3}
    \tilde{\mathfrak {tr}}_0^p := \{
    X\in \mathcal K(M): X \textrm{ is a transvection at p with } X_p\in \nu_p\}.  \ \
\end{equation}

By Corollary \ref {mainC2} one has that
$\dim (\tilde{\mathfrak {tr}}_0^p) =\dim (\nu _p)$. Observe that  the inclusion
$\mathfrak {tr}_0^p\subset \tilde{\mathfrak {tr}}_0^p$  could be strict.

\begin{remark}\label{rem33}
The adapted transvections at $p$, with respect to $G$, are given by $[\mathcal K ^G(M), \tilde{\mathfrak {tr}}_0^p]$. In fact, let $v\in \nu _p$, $X\in \tilde{\mathfrak {tr}}_0^p $ with $X_p=v$ and $Z\in \mathcal K^G(M)$. Then $([X,Z])^p = (\nabla _{v}Z, 0)$  which defines an  arbitrary adapted transvection (see (\ref{initb})). Thus, $[\mathcal K ^G(M), \tilde{\mathfrak {tr}}_0^p]\subset\mathcal K ^G(M)$ and hence $\mathcal K ^G(M)+ \tilde{\mathfrak {tr}}_0^p$ is a Lie algebra of Killing fields of $M$. Observe that this Lie algebra does not depend on $p\in M$. This follows from the fact that $\mathrm{Ad}_g(\tilde{\mathfrak {tr}}_0^p)=\tilde{\mathfrak {tr}}_0^{gp}$ for all $g\in G$.
\end{remark}

Throughout the rest of this section we will denote by
 $\mathfrak g = \mathrm {Lie}(G)\simeq \mathcal K^G(M)$ and by  $\tilde{\mathfrak g }= \mathrm {Lie}(I(M))\simeq \mathcal K(M)$. Set
$$\mathfrak g'=\mathfrak g+ \tilde {\mathfrak {tr}}_0^p\subset \tilde{\mathfrak g}.$$
and put
$$\tilde{\mathfrak{tr}}^p=\{X\in \mathfrak g'\,:\, X\ \text{is a transvection at $p$  with }X_p\in \nu^{(1)}_p\}$$
From Remark \ref{rem33}, $\mathfrak g'$ is a Lie subalgebra of $\tilde{\mathfrak g}$.
On the other hand,  from Lemma \ref{propsosculating} and Remark \ref{rem33}, it follows that $\hat{\nu}_p=[\mathfrak g, \tilde{\mathfrak{tr}}^p_0].p$ and form Corollary \ref{mainC2}, $\nu_p=\tilde{\mathfrak{tr}}^p_0.p$. Since $\nu^{(1)}=\nu+\hat{\nu}$ one gets  that
\begin{equation}\label{eq:5.A}
\tilde{	\mathfrak{tr}}^p=[\mathfrak g, \tilde{\mathfrak{tr}^p_0}]+\tilde{\mathfrak{tr}}^p_0
\end{equation}
and
\begin{equation}\label{eq:5.B}\tilde{\mathfrak {tr}}^p.p = \nu ^{(1)}_p.
\end{equation}
The last equality implies, since a transvection at $p$ is defined by its value at $p$, that
(\ref {eq:5.A}) does not depend on $G$ and hence
\begin{equation}\label{eq:5.AA}
	\tilde{\mathfrak {tr}}^p =	[\mathfrak g', \tilde{\mathfrak{tr}^p_0}]+\tilde{\mathfrak{tr}}^p_0=	[\tilde{\mathfrak g}, \tilde{\mathfrak{tr}^p_0}]+\tilde{\mathfrak{tr}}^p_0. \end{equation}

Let
$\mathfrak a$ be the ideal  generated by $\tilde{\mathfrak {tr}}_0^p$ in $\mathfrak g'$.This ideal does not depend on $p\in M$ since any $g\in G$ maps $\nu _p$ into
$\nu _{gp}$ and so it maps  $\tilde{\mathfrak {tr}}_0^p$ into $\tilde{\mathfrak {tr}}_0^{gp}$. Observe that with the same arguments of Theorem \ref{idealmain} and Remark \ref{remcen} one gets that $\mathfrak a $
is an abelian ideal.

 \begin{lemma}\label{lem:notdep} The abelian ideal $\mathfrak a$ does not depend on the presentation group $G$. In particular, $\mathfrak a$ is also the ideal of $\tilde{\mathfrak g}$ generated by $\tilde{\mathfrak{tr}}^p_0$.
\end{lemma}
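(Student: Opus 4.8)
The plan is to show that $\mathfrak a$, the ideal generated by $\tilde{\mathfrak{tr}}_0^p$ inside $\mathfrak g' = \mathfrak g + \tilde{\mathfrak{tr}}_0^p$, already coincides with the ideal it generates inside the a priori larger Lie algebra $\tilde{\mathfrak g} = \mathcal K(M)$. Since $\mathfrak g' \subset \tilde{\mathfrak g}$ and $\mathfrak a$ is an ideal of $\mathfrak g'$ containing $\tilde{\mathfrak{tr}}_0^p$, it is automatic that $\mathfrak a$ is contained in the ideal $\tilde{\mathfrak a}$ generated by $\tilde{\mathfrak{tr}}_0^p$ in $\tilde{\mathfrak g}$; the content is the reverse inclusion, namely that $[\tilde{\mathfrak g}, \mathfrak a] \subseteq \mathfrak a$. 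Because $\mathfrak a$ is already an ideal of $\mathfrak g'$, it suffices to bracket $\mathfrak a$ with an arbitrary Killing field $W \in \mathcal K(M)$ and show the result lands back in $\mathfrak a$; and since $\mathfrak a$ is abelian and generated as an ideal of $\mathfrak g'$ by $\tilde{\mathfrak{tr}}_0^p$, it is enough to control $[\mathfrak g', [W, X]]$ and $[W,X]$ itself for $X \in \tilde{\mathfrak{tr}}_0^p$ — but really the cleanest route is to show directly that $[W, X] \in \mathfrak a$ for every $X \in \tilde{\mathfrak{tr}}_0^p$ and then invoke that $\mathfrak a$ is already $\mathfrak g'$-stable.

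The key computation will be the initial-conditions formula (\ref{initb}). Write $X \in \tilde{\mathfrak{tr}}_0^p$ with $(X)^p = (v, 0)$, $v \in \nu_p$, and $W \in \mathcal K(M)$ with $(W)^p = (w, B)$. Then $([W,X])^p = (-B(v), R_{w,v})$. Since $v \in \nu_p$ we have $R_{w,v} = 0$, so $([W,X])^p = (-B(v), 0)$: the bracket $[W,X]$ is again a transvection at $p$. The remaining point is that its initial value $-B(v)$ lies in $\nu_p$, so that $[W,X] \in \tilde{\mathfrak{tr}}_0^p \subset \mathfrak a$. For this I would use that $B = (\nabla W)_p$ generates a one-parameter group of linear isometries $\mathrm e^{tB}$ which are restrictions of the differentials of isometries (the flow $\phi_t$ of $W$, composed with parallel transport, via (\ref{etnabla})); since isometries preserve the curvature tensor, they preserve the nullity distribution, hence $\mathrm e^{tB}(\nu_p) = \nu_{\phi_t(p)}$ pulled back — more precisely $\tau_t^{-1}\mathrm d_p\phi_t(\nu_p) = \nu_p$ because $\nu$ is parallel along... no: rather, differentiating $\mathrm e^{tB}(\nu_p)=\nu_p$ at $t=0$ gives $B(\nu_p)\subseteq \nu_p$. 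That $\mathrm e^{tB}$ preserves $\nu_p$ follows because $\tau_t$ maps $\nu_p$ isomorphically onto $\nu_{\gamma(t)}$ for the integral curve of $W$ — no, $W$ need not be tangent to nullity. The correct argument: $\mathrm e^{tB} = \tau_t^{-1}\circ \mathrm d_p\phi_t$ sends $\nu_p$ to $\tau_t^{-1}(\nu_{\phi_t(p)})$, and parallel transport along any curve maps nullity to nullity (as $\nu$ is autoparallel and, being the nullity of $R$, is invariant under parallel transport since $R$ restricted appropriately... ). The clean fact I will cite is that the full isometry group permutes the nullity spaces, so $\mathrm d_p\phi_t(\nu_p) = \nu_{\phi_t(p)}$; combined with the fact (used repeatedly in the paper, e.g.\ Lemma \ref{ABC}(ii) and Remark \ref{2343}) that parallel transport preserves $\nu$, we get $\mathrm e^{tB}(\nu_p)=\nu_p$, hence $B(\nu_p) \subseteq \nu_p$.

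Granting $B(\nu_p)\subseteq \nu_p$, we conclude $([W,X])^p = (-B(v),0)$ with $-B(v) \in \nu_p$, so $[W,X] \in \tilde{\mathfrak{tr}}_0^p$. Therefore $[\mathcal K(M), \tilde{\mathfrak{tr}}_0^p] \subseteq \tilde{\mathfrak{tr}}_0^p \subseteq \mathfrak a$. It follows that the ideal generated by $\tilde{\mathfrak{tr}}_0^p$ in $\tilde{\mathfrak g}$ is contained in $\mathfrak a$: indeed $\tilde{\mathfrak{tr}}_0^p$ together with all iterated brackets against $\tilde{\mathfrak g}$ never leaves $\tilde{\mathfrak{tr}}_0^p$, so $\mathfrak a$ already absorbs these, and since $\mathfrak a$ is an ideal of $\mathfrak g'$ closed under brackets with $\tilde{\mathfrak{tr}}_0^p$ and hence with $\mathcal K(M)$ via the relation above, $\mathfrak a$ is an ideal of $\tilde{\mathfrak g}$. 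Combined with the trivial inclusion $\mathfrak a \subseteq \tilde{\mathfrak a}$ this gives $\mathfrak a = \tilde{\mathfrak a}$, and in particular $\mathfrak a$ does not depend on the presentation group $G$. I expect the main (and only genuine) obstacle to be the precise justification that $B = (\nabla W)_p$ preserves $\nu_p$ — i.e.\ that the nullity is stable under the infinitesimal holonomy-type rotations coming from covariant derivatives of Killing fields; everything else is bookkeeping with (\ref{initb}) and the definition of an ideal.
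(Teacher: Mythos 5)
Your argument hinges on the claim that $B(\nu_p)\subseteq\nu_p$ for $B=(\nabla W)_p$ with $W\in\mathcal K(M)$ arbitrary, and this claim is false. If it held for every Killing field, then $\hat\nu_p=\mathrm{span}\{\nabla_vZ=(\nabla Z)_p(v):Z\in\mathcal K^G(M),\,v\in\nu_p\}$ would be contained in $\nu_p$, i.e.\ $\nu^{(1)}=\nu$ and $\nu$ would be parallel — contradicting the strict inclusion $\nu\subsetneq\nu^{(1)}$ of Lemma \ref{propsosculating} (and the irreducibility of $M$). Concretely, your computation correctly shows that $[W,X]$ is a transvection at $p$ with value $\nabla_vW$, but by Remark \ref{rem33} these are exactly the \emph{adapted} transvections, whose values span $\hat\nu_p\not\subseteq\nu_p$; so $[\mathcal K(M),\tilde{\mathfrak{tr}}_0^p]\not\subseteq\tilde{\mathfrak{tr}}_0^p$, and your conclusion that the iterated brackets ``never leave $\tilde{\mathfrak{tr}}_0^p$'' would force $\mathfrak a=\tilde{\mathfrak{tr}}_0^p$, which is already contradicted by the fact that $\mathfrak a\supseteq[\mathfrak g,\tilde{\mathfrak{tr}}_0^p]$. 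The justification you reach for — that $\mathrm e^{tB}=\tau_t^{-1}\circ\mathrm d_p\phi_t$ preserves $\nu_p$ — fails because parallel transport along an arbitrary integral curve of $W$ does \emph{not} carry $\nu$ to $\nu$ ($\nu$ is autoparallel but not parallel); Lemma \ref{ABC}(ii) only applies along curves tangent to the nullity. Your own hesitations in the write-up flag exactly this point.

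What \emph{is} true, and what the paper exploits, is that the argument works for Killing fields in the isotropy at $p$: if $W_p=0$ then $\mathrm e^{tB}=\mathrm d_p\phi_t$ genuinely preserves $\nu_p$, so $[\mathfrak h',\tilde{\mathfrak{tr}}_0^p]\subseteq\tilde{\mathfrak{tr}}_0^p$ for a complement $\mathfrak h'\subseteq\tilde{\mathfrak g}_p$ with $\tilde{\mathfrak g}=\mathfrak g'\oplus\mathfrak h'$. The paper then runs an induction (via the Jacobi identity) on the iterated brackets $[\tilde{\mathfrak g},\tilde{\mathfrak{tr}}_0^p]^k$, using equation (\ref{eq:5.AA}) to start it, to show that the ideal generated in $\tilde{\mathfrak g}$ equals the one generated in $\mathfrak g'$. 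Your proposal skips this decomposition and induction, which is where the real work lies; as written the proof does not go through.
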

\begin{proof}
 It suffices to  show that the ideal $\bar {\mathfrak a}$ of $\tilde {\mathfrak g}\simeq \mathcal K(M)$ generated by $\tilde{\mathfrak {tr}}_0^p$ coincides with
$\mathfrak a$.
Let us denote, for a pair of Lie subalgebras $\mathfrak b$ and  $\mathfrak c$ of a Lie algebra,
$\ [\mathfrak c , \mathfrak b]^{0} = \mathfrak b$ and inductively
$[\mathfrak c , \mathfrak b]^{k+1}= [\mathfrak c , [\mathfrak c,\mathfrak b]^k]$. We need to show that for each $r\in \mathbb{N}$,
\begin{equation}\label  {99}   \sum _{k = 0}^r [\mathfrak {g}',  \tilde{\mathfrak{tr}_0}^p]^{k} =\sum _{k =0}^r [\tilde {\mathfrak {g}}, \tilde{\mathfrak {tr}}_0^p]^{k}.
\end{equation}
	 Let us write
\begin{equation}\label {F88}\tilde {\mathfrak g} = \mathfrak g' \oplus \mathfrak h', \end{equation}
where
$\mathfrak h'$ is a linear subspace of the full isotropy algebra $\tilde {\mathfrak g}_p$. 	We shall prove first that
	\begin{equation}\label{eqaux}
		\left[\mathfrak h',\sum_{k=0}^r[\tilde{\mathfrak g},\tilde{\mathfrak {tr}^p_0}]^k\right] \subset \sum_{k=0}^r[\tilde{\mathfrak g},\tilde{\mathfrak {tr}^p_0}]^k\end{equation}
	Since the isotropy at $p$ maps $\nu _p$ into itself and transvections into transvections one has that
	$[\mathfrak h', \tilde{\mathfrak {tr}_0^p}]\subset \tilde{\mathfrak {tr}_0^p}$ and so (\ref{eqaux}) holds for $r=0$. Assume now that (\ref{eqaux}) is true for some $r$. Then
\begin{eqnarray*}	\left[\mathfrak h',[\tilde{\mathfrak g},\tilde{\mathfrak {tr}^p_0}]^{r+1}\right]&= &
	 	\left[\mathfrak h',[\tilde{\mathfrak g},[\tilde{\mathfrak g},\tilde{\mathfrak {tr}^p_0}]^{r}]\right]=\left[[\mathfrak h',\tilde{\mathfrak g}],[\tilde{\mathfrak g},\tilde{\mathfrak {tr}^p_0}]^{r}]\right]+\left[\tilde{\mathfrak g}, [\mathfrak h',[\tilde{\mathfrak g},\tilde{\mathfrak {tr}^p_0}]^r]\right]\\
	 	&\subset & [\tilde{\mathfrak g},[\tilde{\mathfrak g},\tilde{\mathfrak{tr}^p_0}]^r]+\left[\tilde{\mathfrak g},\sum_{k=0}^r[\tilde{\mathfrak g},\tilde{\mathfrak{tr}^p_0}]^k\right]\subset \sum_{k=0}^{r+1}[\tilde{\mathfrak g},\tilde{\mathfrak{tr}^p_0}]^k.
	 	\end{eqnarray*}
 This implies that (\ref{eqaux}) holds for every $r\in \mathbb{N}$.
 	
Let us turn to the proof of (\ref{99}). It trivially holds for $r=0$, and 	from equation (\ref{eq:5.AA}), one gets that it holds for $r=1$.
So assume that (\ref{99}) is true for some $r$.  In particular,
$$[\tilde{\mathfrak g},\tilde{\mathfrak{tr}^p_0}]^r\subset \sum _{k =0}^r [\tilde {\mathfrak {g}}, \tilde{\mathfrak {tr}}_0^p]^{k} = \sum _{k = 0}^r [\mathfrak {g}',  \tilde{\mathfrak{tr}_0}^p]^{k}$$ From this and from (\ref{eqaux}) one gets that
$$[\tilde{\mathfrak g},\tilde{\mathfrak{tr}^p_0}]^{r+1}=[\mathfrak g'+\mathfrak h',[\tilde{\mathfrak g},\tilde{\mathfrak{tr}^p_0}]^r] \subset  \sum _{k = 0}^{r+1}[\mathfrak {g}',  \tilde{\mathfrak{tr}_0}^p]^{k} $$
which implies the non-trivial inclusion on (\ref{99}).
\end{proof}

\vspace{.2cm}

\begin{proof} [Proof of Theorem \ref{dakl}] Part (1) is just Corollary \ref{mainC2}. Part (2) follows from Remark \ref{rem33}. Part (3)  follows from Lemma \ref{lem:notdep} and its preceding paragraph.
	
	Let us prove (4): by (\ref{nupic}) and (\ref{nupic4}) one has that $\nu ^{(1)} = \nu + \hat {\nu}$ and $$\nu ^{(2)} = \nu ^{(1)} + \hat{\hat {\nu}}$$
		where $\hat{\hat {\nu}}_q= \mathrm {span}\,  \{\nabla _v Z: Z\in \mathcal K ^G (M) , v\in \hat {\nu} _q \}$.
		
	Observe that, from (\ref{eq:5.B}), $\nu ^{(1)}_p=\tilde{\mathfrak {tr}}^p.p $ and from (\ref{eq:5.AA}), $\tilde{\mathfrak {tr}}^p\subset \mathfrak a$. So $\nu ^{(1)}_p \subset \mathfrak a . p$.
	On the other hand, from Lemma \ref{propsosculating} and remark \ref{rem33}, any element  $v\in \hat{\nu}_p$ is the initial value of a transvection $X\in [\mathfrak g,\tilde{\mathfrak{tr}}^p_0]$. Then for each $Z\in \mathfrak g\simeq \mathcal{K}^G(M)$ one has that $\nabla_v Z=\nabla_{X_p}Z=[X,Z]_p$. Hence $\hat {\hat{\nu}}_p\subset [\mathfrak g ,[\mathfrak g,\tilde{\mathfrak {tr}}^p_0]].p\subset \mathfrak a.p$  and so $\nu ^{(2)}_p\subset
	\mathfrak a .p$. This proves the first assertion of (4).

 \noindent  Let $a\in \bar {A}$. One has that
 $\tilde{\mathfrak {tr}}^{ap}= \mathrm{Ad}_a(\tilde{\mathfrak {tr}}^{p})$ (resp., $\tilde {\mathfrak {tr}}_0^{ap}= \mathrm{Ad}_a
 (\tilde{\mathfrak {tr}}_0^{p})$).  Then, since $\bar{A}$ is abelian, $\tilde{\mathfrak {tr}}^{ap}= \tilde{\mathfrak {tr}}^{p}$ (resp.,$\tilde{\mathfrak {tr}}_0^{ap}= \tilde{\mathfrak {tr}}_0^{p}$) for all $a\in \bar {A}$. Observe that the distribution
 $\nu ^{(1)}$ (resp., $\nu $), restricted to $\bar{A}\cdot p$, is spanned by the elements $X$ of $\tilde{\mathfrak{tr}}^{p}$ (resp., $\tilde{\mathfrak{tr}}_0^{p}$). Since $\nabla X=0$ when restricted to $\bar {A}.p$, we obtain the second part of (4).
  \end{proof}
\vspace{.25cm}

 \begin{proof}[Proof of Corollary \ref{cor:A}]

The isotropy $H$ is trivial by Theorem A (3) of \cite {DOV}.  Observe that, with the same argument,  any presentation group of $M$ must have trivial isotropy. So, in this case, $\mathfrak g=\mathfrak g'=\tilde{\mathfrak g}$.

Let $\mathfrak a$ be  the abelian ideal of $\mathfrak g$
generated by $\tilde{\mathfrak{tr}}^p_0$ (see part (3) of Theorem \ref{dakl}). Since $M$ is non-flat, $\mathfrak a .p$ is properly contained in $T_pM$. From Theorem A (1) of \cite {DOV} one has that $\nu ^{(2)}_p$ has codimension $1$ in $T_pM$ (cf. \ref{osculatingandbounded}). Then, from part (4) of Theorem \ref{dakl},
$\mathfrak a$ has codimension $1$ in $\mathfrak g$. If $w\in \mathfrak g$ does not belong to $\mathfrak a$ then
$\mathfrak g = \mathbb R w \rtimes \mathfrak a$. Let $A$ be the Lie normal subgroup of $G$ associated to
$\mathfrak a$ and $L$ be the one-dimensional Lie subgroup of $G$ associated to $\mathbb Rw$. Let $\pi :\tilde A \simeq \mathbb R^{n-1}\to A$ and $\pi ':\tilde L \simeq \mathbb R \to L$  be the universal cover of $A$ and $L$, respectively. Then the map $m: \tilde L \times \tilde A \to G$ given by $m((l,a)) = \pi ' (l)\pi (a)$ is a covering map. Since $G\simeq M$ is simply connected one has that $m$ is a diffeomorphism. Hence $L\simeq \mathbb R$ and $A\simeq \mathbb R^{n-1}$ and thus $G$ is isomorphic   to  a semidirect product $\mathbb R ^{n-1}\rtimes \mathbb R$.
\end{proof}
\vspace{.25cm}

We finish this section with a lemma and a remark that will be needed in the following section.
\begin{lemma}\label{nois}. Let $M=G/H$ be a (connected) homogeneous Riemannian manifold
where $G$ is a connected closed Lie subgroup of $I^o(M)$. Let $N$  be a connected closed abelian normal subgroup of $G$ and let $N\cdot p \subset M$ be any  orbit. Then  the isotropy $N_p$ is trivial.
\end{lemma}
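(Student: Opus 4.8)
The plan is to argue by contradiction. Suppose some point $q\in N\cdot p$ has non-trivial isotropy, so there is $n_0\in N$, $n_0\neq e$, with $n_0\cdot q = q$. Since $N$ is normal in $G$ and $q = g\cdot p$ for some $g\in G$, conjugating by $g^{-1}$ we may assume $q=p$, i.e. $N_p\neq\{e\}$. So we have a non-trivial element $n_0\in N_p$, and since $N$ is a connected abelian Lie group it is a quotient of $\mathbb R^k$; writing $n_0 = \mathrm{Exp}(\xi)$ for a suitable $\xi$ in $\mathrm{Lie}(N)$, we get a one-parameter subgroup $t\mapsto \mathrm{Exp}(t\xi)$ of $N$. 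The issue is that this one-parameter subgroup need not fix $p$; what we do know is that it fixes $p$ at $t=1$ (and hence at every integer $t$), because $\mathrm{Exp}(\xi)=n_0\in N_p$.

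The key step is to analyze the Killing field $Z\in\mathcal K^G(M)$ induced by $\xi$ and its integral curve $c(t)=\mathrm{Exp}(t\xi)\cdot p$ through $p$. This curve is periodic with period $1$ (or its period divides $1$), so it is a closed curve. On the other hand, because $N$ is abelian and normal, $\mathrm{Lie}(N)$ is an abelian ideal of $\mathfrak g$, and the orbit $N\cdot p$ is a homogeneous submanifold on which $Z$ restricts to a Killing field of the induced metric; more importantly, for any other element $\eta\in\mathrm{Lie}(N)$ the corresponding Killing fields commute, so the flows commute and $N\cdot p$ is covered by a flat torus picture. The plan is to use equation (\ref{feq}) (the Koszul-type formula for Killing fields) restricted to $\mathrm{Lie}(N)$: since all brackets among elements of $\mathrm{Lie}(N)$ vanish, one gets $\langle\nabla_{Z}Z,W\rangle=0$ for all $W$ induced by $\mathrm{Lie}(N)$, and one can analyze the full $\nabla_Z Z$ and show the integral curve $c(t)$ is in fact a geodesic of $M$, or at least has controlled behavior. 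A closed geodesic (or a curve of this type that closes up) forces $Z_p$ and $(\nabla Z)_p$ to interact in a way that contradicts $n_0\neq e$: concretely, using (\ref{etnabla2}), $\tau_t = \mathrm{d}_p\phi_t\circ \mathrm{e}^{-t(\nabla Z)_p}$, and at $t=1$ we have $\phi_1(p)=p$ so $\mathrm{d}_p\phi_1 = \tau_1\circ \mathrm{e}^{(\nabla Z)_p}$; if in addition the whole orbit structure forces $c$ to have length zero (i.e. $Z_p=0$), then $\phi_1 = $ rotation $\mathrm{e}^{(\nabla Z)_p}$ fixing $p$, and one must rule this out using that $N$ acts with discrete... no — rather, one shows $Z_p\neq 0$ leads to $c$ being a non-constant closed curve in an abelian orbit, which by the flat structure of the orbit cannot close up unless $\xi$ already lies in the isotropy direction.

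Let me reorganize: the cleanest route is probably to show directly that $N$ acts \emph{freely} on $M$ by exploiting that $N$ is abelian and normal. Consider the subgroup $N_p$; it is a closed subgroup of the connected abelian group $N$, hence $N_p$ has an identity component $N_p^0$ which is a connected closed subgroup, and $N/N_p$ (if $N_p$ were positive-dimensional) — the orbit $N\cdot p\cong N/N_p$. The Killing fields induced by $\mathrm{Lie}(N_p)$ vanish at $p$ but generate isometries; since $\mathrm{Lie}(N)$ is abelian, $\mathrm{Lie}(N_p)$ is an ideal in $\mathrm{Lie}(N)$ and $[\mathrm{Lie}(N),\mathrm{Lie}(N_p)]=0$, so these vanishing Killing fields are $\mathrm{Ad}(N)$-invariant; by homogeneity of the orbit, if a Killing field in $\mathcal K^G(M)$ coming from $\mathrm{Lie}(N)$ vanishes at $p$ it vanishes along all of $N\cdot p$, and then — using that $N\cdot p$ is totally geodesic or at least that the second fundamental form is controlled — one propagates the vanishing. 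The main obstacle I anticipate is precisely the discrete part: ruling out a non-trivial element $n_0\in N_p$ that does \emph{not} lie on a one-parameter subgroup inside $N_p$. For that I would argue that $n_0 = \mathrm{Exp}(\xi)$ with $\xi\in\mathrm{Lie}(N)$, look at the Killing field $Z$ induced by $\xi$, note $\phi_1^Z$ fixes $p$, and then use that $\phi_t^Z$ is a one-parameter group of isometries whose restriction to the flat homogeneous orbit $N\cdot p$ must be a translation (since $N$ is abelian, its action on the orbit is simply transitive modulo $N_p$, i.e. by translations on a flat quotient of $\mathbb R^k$); a translation that returns to the base point at $t=1$ and moves continuously must have been the trivial translation all along on the relevant factor, forcing $\xi\in\mathrm{Lie}(N_p)$ — but then iterating, $\mathrm{Lie}(N_p)$ consists of Killing fields vanishing on the whole orbit, and a final argument (e.g. that $N$, being a subgroup of $I^o(M)$ acting effectively) shows $\mathrm{Lie}(N_p)=0$ and then $N_p$ is both discrete and, by the translation argument, trivial.
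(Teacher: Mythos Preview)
Your argument has a genuine gap in the ``translation argument'' at the end. You write $n_0=\mathrm{Exp}(\xi)$ with $\xi\in\mathrm{Lie}(N)$, observe that the one-parameter group $t\mapsto\mathrm{Exp}(t\xi)$ acts by translations on the flat orbit $N\cdot p\cong N/N_p$, and then claim that a translation returning to the base point at $t=1$ ``must have been the trivial translation all along on the relevant factor, forcing $\xi\in\mathrm{Lie}(N_p)$.'' This is false: the orbit $N/N_p$ may well have a torus factor, and on a torus a non-trivial one-parameter group of translations closes up perfectly well. So you cannot conclude $\xi\in\mathrm{Lie}(N_p)$, and the discrete part of $N_p$ is never ruled out. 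The geodesic/Koszul computations earlier in your sketch are peripheral and do not address this.

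The paper's proof is much shorter and bypasses all orbit geometry. It uses two facts you never invoke: first, $N_p$ is \emph{compact}, since it is a closed subgroup of the compact isotropy $H=G_p\subset I^o(M)_p$; second, the structure theorem $N\cong T^k\times\mathbb{R}^m$ for connected closed abelian Lie groups. Any compact subgroup of $N$ lies in $T^k$, so $N_p\subset T^k$. Now $T^k$ is the unique maximal compact subgroup of $N$, hence is preserved by conjugation by $G$; since $G$ is connected and $\mathrm{Aut}(T^k)\cong GL(k,\mathbb{Z})$ is discrete, conjugation is trivial on $T^k$, i.e.\ $T^k\subset C(G)$. Thus $N_p$ is central in $G$, fixes $p$, and therefore fixes every point $g\cdot p$; effectiveness gives $N_p=\{1\}$. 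The missing ingredients in your approach are precisely compactness of $N_p$ and the passage to the center of $G$ via the torus.
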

\begin{proof}
Since $N$ is abelian, then $N=T^k\times \mathbb R^m$ where $T^k$ is a torus.
Since $N$ is a normal subgroup and $G$ is connected the conjugation by $g\in G$
$I_g: N\to N$ acts trivially on $T^k$. Thus, $T^k$ is included in the center $C(G)$ of $G$. Since $N_p$ is compact, $N_p\subset T^k$.   Then $N_p\subset C(G)$ and hence $N_p= \{1\}$ since $G$ acts effectively.
\end{proof}

\begin{rem}\label{nois2} We have the following infinitesimal version of Lemma \ref{nois}. Let $\mathfrak a$ be an abelian ideal of  $\mathfrak g = \mathrm {Lie}(G)$, where $G$ acts, by isometries, almost effectively and transitively on $M$. Then $\mathfrak a \cap \mathfrak g_p = \{0\}$ for all $p\in M$, where $\mathfrak g _p$ is the isotropy algebra at $p$. This can be proved as follows: the Killing form of $\mathfrak g$ is null when restricted to $\mathfrak a$ and it is negative definite when restricted to the isotropy algebra (see the proof of Lemma 2.7 of \cite {DOV}).
\end{rem}

\

\section{Non-solvable examples}\label{nonsolvable}

Let $K$ be a  simple simply connected  compact Lie group with Lie algebra $\mathfrak k$ and let   $\rho: K \to SO_n$  be an irreducible orthogonal  representation. Let us consider the semidirect product $G= \mathbb {R}^n\rtimes_\rho K$. Namely
$G= \mathbb {R}^n\times K$ and
$(v,k)(v',k')= (v +\rho (k) (v'), kk') $. Observe that the Lie algebra of $G$ is $\mathfrak g = \mathbb R^n\rtimes_{\rho} \mathfrak k$, which is the vector space $\mathbb{R}^n\oplus \mathfrak k$ with the following Lie bracket $[\,,\,]$: if $v,w\in \mathbb{R}^n$ and $X,Y\in \mathfrak k$ then
$$[v,w]=0, \ \ [X,Y]=[X,Y]_{\mathfrak k},\ \ [X,v]=d\rho_e(X)(v).$$ Let us consider the simply connected Riemannian manifold $M=G$, with a left invariant metric which restricted to $\mathbb{R}^n$ coincides with the canonical metric of $\mathbb{R}^n$. Then for each $w\in \mathbb{R}^n\subset\mathfrak g$, the Killing vector field of $M$ induced by $w$ is $$\tilde{w}_{(v,k)}=(w,0_k). $$ Moreover, from (\ref{feq}), and the fact that $\rho$ is an orthogonal representation, one obtains that  for each $v,w\in \mathbb{R}^n$
 \begin{equation}\label{parallelrn}\nabla_{\tilde{v}}\tilde{w}=0.\end{equation}

Let $\mathcal D$ be the (integrable) distribution of $M$ defined at $p=(v,k)$ by  $\mathcal D_p=  (\mathbb{R}^n,0_k)$. It is not hard to prove that $\mathcal{D}$ is left-invariant, and from equation (\ref{parallelrn}) one obtains that $\mathcal D$ is an autoparallel distribution.

\begin{rem}\label{idealabeliano} The only proper ideal of $\mathfrak g$ is $\mathbb{R}^n$. In fact, consider the projection $\pi:\mathfrak g\to \mathfrak g/\mathbb R^{n}\simeq \mathfrak k$ and let $\mathfrak J$ be a proper ideal of $\mathfrak g$. Then $\pi(\mathfrak J)$ is an ideal of this quotient, and since $\mathfrak k$ is simple, either $\pi(\mathfrak J)=\{0\}$ or $\pi(\mathfrak J)\simeq \mathfrak k$.
	If  $\pi(\mathfrak J)=0$, then $\mathfrak J\subset \mathbb{R}^n$. But then $\mathfrak J$ is a $\mathfrak k$-invariant subspace of $\R^n$. Since the action of $K$ on $\mathbb{R}^n$ is irreducible one concludes that  $\mathfrak J=\mathbb{R}^n$.
	A similar argument shows that if $\pi(\mathfrak J)=\simeq \mathfrak k$, then $\mathfrak J=\mathfrak g$.
	\end{rem}

\begin{rem} \label{obvio} Let $u\in \mathfrak g$ and let $\tilde u$ be the Killing field induced by $u$, i.e. $\tilde u _q= u.q$. Then, if $g$ is an isometry, $g_*(\tilde u) = \textrm{Ad}_g(u)^{\tilde \,}$. In  particular, $u$ is a transvection at $q$ if and only if $g_*(\tilde u)$ is a transvection at $g(q)$.
 \end{rem}

\begin{lemma}\label{nse}
If $M$ has a non-trivial Euclidean de Rham factor then $\mathcal D$ is a parallel distribution and the Killing fields induced by $\mathbb R^n
\simeq \mathrm {Lie}(\mathbb R^n)$ are parallel fields. Furthermore, $M$ is isometric to the Riemannian product of the Euclidean space $\mathbb R^n$ by the Lie group $K$ with a left invariant metric.

\end{lemma}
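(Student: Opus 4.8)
The plan is to deduce everything from the de Rham decomposition of $M$ together with Remark~\ref{idealabeliano}; the key observation is that all three assertions follow once one shows that the Killing fields $\tilde w$, $w\in\mathbb R^n$, are parallel. First I would use the de Rham decomposition of the simply connected complete manifold $M$ to write $M=\mathbb R^k\times M'$ with $k\ge 1$ (this is exactly the hypothesis that the Euclidean factor is non-trivial) and $M'$ without Euclidean de Rham factor. Since this splitting is canonical it is preserved by every isometry, and as $M'$ has no flat factor one gets $I(M)=I(\mathbb R^k)\times I(M')$, so $\tilde{\mathfrak g}=\mathcal K(M)=\mathcal K(\mathbb R^k)\oplus\mathcal K(M')$ is a direct sum of ideals; let $\pi_1,\pi_2$ be the corresponding projections. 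Because $M'$ has no non-zero parallel Killing field, the space $\mathcal P$ of parallel (hence Killing) vector fields of $M$ is exactly the space of translations of the Euclidean factor, so $\mathcal P$ is a non-zero abelian ideal of $\tilde{\mathfrak g}$ contained in $\mathcal K(\mathbb R^k)$.

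Next I would look at the three subspaces $\mathcal P\cap\mathfrak g$, $\mathfrak g\cap\mathcal K(M')$ and $\mathfrak g\cap\mathcal K(\mathbb R^k)$, each of which is an ideal of $\mathfrak g$ and hence, by Remark~\ref{idealabeliano}, equal to $\{0\}$, $\mathbb R^n$ or $\mathfrak g$ (one may assume $\rho$ non-trivial; if $\rho$ is trivial then $n=1$ and the statement is immediate from the Koszul formula~(\ref{feq})). The value $\mathfrak g$ is always impossible: $\mathcal P\cap\mathfrak g=\mathfrak g$ would make $M$ flat and $\mathfrak g$ abelian; $\mathfrak g\cap\mathcal K(M')=\mathfrak g$ would give $M=M'$, i.e. $k=0$; and $\mathfrak g\cap\mathcal K(\mathbb R^k)=\mathfrak g$ would again force $M$ flat. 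If $\mathcal P\cap\mathfrak g=\mathbb R^n$ the proof is essentially finished (see the last paragraph), so the crux is to rule out $\mathcal P\cap\mathfrak g=\{0\}$, and I expect this to be the hard part.

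So suppose $\mathcal P\cap\mathfrak g=\{0\}$; here is how I would derive a contradiction. Since $K$ is compact, $\pi_1(K)$ fixes a point $x_\ast\in\mathbb R^k$; if $\mathbb R^n$ acted tangentially to $M'$ then all of $G$ would fix the $\mathbb R^k$-coordinate of $(x_\ast,y_0)$, contradicting transitivity and $k\ge1$, so $\mathfrak g\cap\mathcal K(M')=\{0\}$ and $\pi_1|_{\mathfrak g}$ is injective. Composing it with the linear-part homomorphism $\mathcal K(\mathbb R^k)\to\mathfrak{so}(k)$ gives a homomorphism whose kernel is an ideal of $\mathfrak g$: not $\mathfrak g$ (else $\pi_1(\mathfrak g)$ would be abelian) and not $\{0\}$ (else $\mathfrak g$ would embed into the reductive algebra $\mathfrak{so}(k)$, whereas $\mathfrak g=\mathbb R^n\rtimes_\rho\mathfrak k$ is not reductive), hence $\mathbb R^n$; thus every $\tilde w$, $w\in\mathbb R^n$, projects to a translation of $\mathbb R^k$. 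Since the $\pi_1(G)$-orbit of $x_\ast$ is precisely this family of translates, transitivity forces those translations to span $\mathbb R^k$, and injectivity then gives $k=n$; consequently $\dim M'=\dim M-k=\dim K$. Because $M\cong\mathbb R^n\times K$ and $M\cong\mathbb R^k\times M'$ as manifolds, $M'$ is homotopy equivalent to $K$, so $H^{\dim M'}(M';\mathbb R)\cong H^{\dim K}(K;\mathbb R)\ne0$ and $M'$ is compact. Finally, $\mathfrak g\cap\mathcal K(\mathbb R^k)$ cannot be $\mathbb R^n$ (that would make each $\tilde w$ a translation of $\mathbb R^k$, hence parallel and in $\mathcal P\cap\mathfrak g=\{0\}$, forcing $n=0$), so it is $\{0\}$ and $\pi_2|_{\mathfrak g}$ is injective; but then $\mathfrak g$ embeds into the compact, hence reductive, Lie algebra $\mathcal K(M')$ of the compact manifold $M'$, contradicting its non-reductivity. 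Hence $\mathcal P\cap\mathfrak g=\mathbb R^n$.

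It then remains to carry out the easy part. From $\mathcal P\cap\mathfrak g=\mathbb R^n$, every $\tilde w$ with $w\in\mathbb R^n$ is parallel, so $\mathcal D=\mathrm{span}\{\tilde w:w\in\mathbb R^n\}$, being spanned by parallel fields, is a parallel distribution. Then $\mathcal D^\perp$ is parallel too and $M$ is the Riemannian product of the leaves of $\mathcal D$ and of $\mathcal D^\perp$ through $e$. The leaf of $\mathcal D$ through $e$ is the $\mathbb R^n$-orbit $\mathbb R^n\times\{e_K\}$, which by left-invariance of the metric and the hypothesis that it restricts to the canonical metric on $\mathbb R^n$ (equivalently $\nabla_{\tilde v}\tilde w=0$) is isometric to Euclidean $\mathbb R^n$. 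On the other hand the quotient map $q:M=G\to G/\mathbb R^n=K$ has the leaves of $\mathcal D$ as its fibres; the left $\mathbb R^n$-action being by isometries preserving $\mathcal D^\perp$, there is a unique metric on $K$ making $q$ a Riemannian submersion, it is left-invariant because $K=\{0\}\times K\subset G$ acts on $M$ compatibly with $q$, and $q$ restricted to the leaf of $\mathcal D^\perp$ through $e$ is an isometry onto $K$ with this metric. Hence $M$ is isometric to $\mathbb R^n\times K$ with a left-invariant metric, and the two remaining assertions ($\mathcal D$ parallel, the $\tilde w$ parallel) have been obtained along the way. As indicated, the main obstacle is the previous paragraph: excluding $\mathcal P\cap\mathfrak g=\{0\}$ forces one to combine the compactness of $K$, the irreducibility of $\rho$ (through Remark~\ref{idealabeliano} and the non-reductivity of $\mathfrak g$), transitivity, and the topological fact that $M'$ inherits the homotopy type of $K$ and is therefore compact.
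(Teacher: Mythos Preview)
Your argument is correct and reaches the conclusion, but it follows a genuinely different path from the paper. The paper splits into two cases according to whether $\nabla_v X=0$ for every $X\in\mathcal K^G(M)$ and $v\in\mathcal E_p$ (where $\mathcal E$ is the flat de Rham distribution). In the non-vanishing case it invokes \cite[Proposition~3.6]{DOV} to produce a transvection of $G$ lying in $\mathcal E$; the ideal it generates is abelian, hence equals $\mathbb R^n$ by Remark~\ref{idealabeliano}, and one concludes that all $\tilde w$ are parallel. In the vanishing case the paper shows directly that every $X\in\mathfrak g$ decomposes as a Euclidean translation plus a field on $M'$, and then a fixed-point argument for $\pi(K)$ forces $\mathfrak k$ (hence $\mathfrak g$) into $\mathcal E^\perp$, contradicting transitivity. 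Your route instead hinges on the single trichotomy for the ideal $\mathcal P\cap\mathfrak g$ and eliminates the case $\{0\}$ via the non-reductivity of $\mathfrak g$ together with a topological step (the homotopy equivalence $M'\simeq K$ forces $H^{\dim M'}(M')\neq 0$, hence $M'$ compact). This avoids the external reference to \cite{DOV} and the transvection machinery, at the price of importing Lie-theoretic and cohomological facts. One phrasing to tighten: when you write that $\mathfrak g$ ``would embed into the reductive algebra $\mathfrak{so}(k)$'', the relevant property is that $\mathfrak{so}(k)$ is \emph{compact} (admits an $\mathrm{ad}$-invariant inner product), since subalgebras of merely reductive algebras need not be reductive; with ``compact'' in place of ``reductive'' the step is sound.
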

\begin{proof}
Let $M= E\times M'$ where $E\simeq \mathbb R^r$ is the Euclidean de Rham factor and $M'$ is the product of the irreducible de Rham factors. Then the space of Killing fields of $M$ decomposes as
\begin{equation}\label{desckilling}
\mathcal K(M) = \mathcal K(\mathbb R^r)\oplus
\mathcal K(M')
\end{equation}
and the full isometry group of $M$ decomposes as
\begin{equation}
\label{desciso} I(M)=I(E)\times I(M').
\end{equation}
Let $\mathcal E$ be the parallel distribution of $M$ associated to the flat de Rham factor. Then its first osculating distribution
$\mathcal E^{(1)}$ (see equation \ref{firstosculating}) coincides with $\mathcal E$ and so
\begin{equation}\label{7}
\nabla _{\mathcal E_p}X \subset \mathcal E_p
\end{equation} for all
$X\in \mathcal K^G(M)\simeq \mathfrak g = \mathrm {Lie}(G)$, $p\in M$. The same argument used in \cite [Proposition 3.6] {DOV} shows that there exists always a transvection induced by $G$ in the direction of $\nabla _vX$, for all $X\in\mathcal K^G(M)$, $v\in \mathcal E_p$ (the argument used in the proof of such a proposition does not use that there is no flat de Rham factor, except for finding a transvection  not in the nullity).

Case {(\it i)}: $\nabla _vX= 0$, for all $X\in\mathcal K^G(M)$, $v\in \mathcal E_p$. Then, if  $X\in \mathcal K^G(M)$, $X= T + Z$ where
$T$ is a transvection of the Euclidean factor and $Z$ is a Killing field of $M'$ (by enlarging $G$ to the full isometry group and with the usual identification \ref{desckilling}). Then, if $X_p\perp \mathcal E_p$, for some $p\in M$, the field $X$ must be always tangent to the distribution
$\mathcal E^\perp$ associated to the factor $M'$.
Let us regard $K\subset I(M)$ and consider the projection  $\pi: K \to I(E)$ according to the decomposition \ref{desciso}. Then $\pi (K)$
is a compact subgroup of isometries of $E\simeq \mathbb R^r$. Then
there exists $q\in E$ which is fixed by $\pi(K)$ or equivalently, since $\pi (K)$ is connected, $u.q= 0$ for all $u\in \mathrm {Lie}(\pi (K))$. We may regard $E$ as a fixed integral manifold of $\mathcal E$ and so we will regard $q$ as an element of $M$. Then
$\mathfrak k .q \perp \mathcal E_q$ where $\mathfrak k =\mathrm {Lie}(K)$. Then, from what previously observed, any Killing field $X$ induced by an element of $\mathfrak k$ lies in $\mathcal E^\perp$. Since $\mathcal E^\perp$ is $G$-invariant, the Killing fields induced by $G$ which lie in this distribution are an ideal, let us say $\mathfrak I$, of $\mathfrak g$. Since $\mathfrak k \subset \mathfrak I$, from remark \ref{idealabeliano} we conclude that $\mathfrak I = \mathfrak g$ and thus $\mathcal E=\{0\}$. A contradiction that show that this case cannot hold if $E\neq\{0\}$.

Case {(\it ii)}: $\nabla _vX\neq 0$, for some $X\in \mathfrak g $ and some $v\in \mathcal E _p$. Since, by (\ref {7}), $\nabla _vX\in \mathcal E_p$ we  conclude that there exists a non-trivial transvection $Y$ induced by $G$ such that $Y_p\in \mathcal E_p$. Such a transvection must be always tangent to the Euclidean factor of $M$ and must be a transvection at any point. Then, for any $g\in G$, $g_*(Y)$ lies in $\mathcal E$ and it is a transvection at any point of $M$  (see Remark \ref{obvio}).
Then the ideal $\mathfrak J$ of $\mathfrak g$ generated by $Y$ consists of transvections at any point that lie in $\mathcal E$. Since such transvections must commute we conclude that $\mathfrak J$ is a non-trivial abelian ideal of $\mathfrak g$. Thus by Remark \ref{idealabeliano}, $\mathfrak J = \mathbb{R}^n$. In particular $r=\dim(\mathcal E)=n$.   Observe that in this case any element $v$ of $\mathfrak J$ induces a parallel vector field $\tilde v$ of $M$ which proves the first assertions.
This implies that $\mathcal D= \mathcal E$.

\noindent Let us  consider again the natural projection  $\pi: K\to I(E)$. Then, as in the proof of case (i), $\mathfrak k .q\perp \mathcal E_q= \mathcal D_q$. Then, since $G$ acts simply transitively on $M$ we must have that $K$ act simply transitively on the integral manifold by $q$ of the parallel distribution associated to the factor $M'$ of $M$. From this we conclude that $M'$ is isometric to $K$ with a left invariant metric.
\end{proof}

\begin{lemma}\label{nse2} Let $M= \mathbb R^n \rtimes _\rho K$ be a semidirect product with a left invariant metric  which restricted to $\mathbb{R}^n$ coincides with the canonical metric of $\mathbb{R}^n$, where $\rho :  K \to \mathbb SO_n$ is an irreducible orthogonal representation and $K$ is a simply connected compact simple Lie group with $\dim (K)<n$. Assume that $M$ has no Euclidean de Rham factor. Then $M$ is an irreducible Riemannian manifold.
\end{lemma}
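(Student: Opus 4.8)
The plan is to argue by contradiction, using the de Rham decomposition together with Remark \ref{idealabeliano} and the estimate of Remark \ref{nois2} for abelian ideals. So suppose $M$ is reducible and write its de Rham decomposition $M = M_1 \times \cdots \times M_r$ with $r \ge 2$; since by hypothesis $M$ has no Euclidean de Rham factor, every $M_j$ is non-flat. Because $G$ is connected it preserves each parallel distribution $TM_j$, so $\mathcal K(M) = \bigoplus_j \mathcal K(M_j)$, $I^o(M) = \prod_j I^o(M_j)$, and $\mathfrak g = \mathcal K^G(M)$ embeds into $\bigoplus_j \mathfrak i_j$, where $\mathfrak i_j := \mathrm{Lie}(I^o(M_j))$ are pairwise commuting ideals of $\mathrm{Lie}(I^o(M))$, each acting trivially on the other factors; thus each projection $\pi_j$ restricts to a Lie algebra epimorphism $\mathfrak g \to \mathfrak g_j := \pi_j(\mathfrak g)$ with $\mathfrak g_j$ transitive on $M_j$.

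First I would locate where the abelian ideal $\mathbb R^n$ sits. The subspaces $\mathfrak a_j := \mathfrak g \cap \mathfrak i_j$ are pairwise commuting ideals of $\mathfrak g$ with $\mathfrak a_j \cap \mathfrak a_k = \{0\}$ for $j \neq k$, so by Remark \ref{idealabeliano} each is $\{0\}$, $\mathbb R^n$ or $\mathfrak g$; transitivity excludes $\mathfrak g$, and two of them cannot both be $\mathbb R^n$. Likewise each $\ker(\pi_j|_{\mathfrak g})$ is an ideal of $\mathfrak g$, so $\mathbb R^n \cap \ker(\pi_j|_{\mathfrak g})$ is $\{0\}$ or $\mathbb R^n$, and it is $\{0\}$ for at least one index, since otherwise $\mathbb R^n$ would act trivially on every $M_j$, contradicting effectiveness. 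For such an index, say $j=1$, the ideal $\ker(\pi_1|_{\mathfrak g})$ meets $\mathbb R^n$ trivially, hence equals $\{0\}$, so $\pi_1|_{\mathfrak g}$ is injective, $\mathfrak g \simeq \mathfrak g_1$ is transitive on $M_1$, and (since $\mathbb R^n$ is an abelian ideal of $\mathfrak g_1$) Remark \ref{nois2} gives $\dim M_1 = (n+\dim K) - \dim\mathfrak l_1 \ge n$, where $\mathfrak l_1$ is the isotropy. If this occurred for two indices we would get $\dim M \ge 2n > n + \dim K$, which is impossible; so $j=1$ is the only such index, whence $\mathbb R^n \subseteq \ker(\pi_j|_{\mathfrak g})$ for every $j \neq 1$, i.e. $\mathbb R^n \subseteq \mathfrak i_1$. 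Consequently the distribution $\mathcal D$ is tangent to $M_1$, and for $j \neq 1$ the algebra $\mathfrak g_j$ is a non-zero quotient of $\mathfrak g / \mathbb R^n \simeq \mathfrak k$, hence $\simeq \mathfrak k$ by simplicity; since $G$ acts transitively on $\prod_{j\neq 1} M_j$ through $\mathfrak k$, this product is a single $K$-homogeneous space, and we may assume $M = M_1 \times M_2$ with $M_2 = K/L'$ compact.

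Next I would record the geometry of $M_1$. The $\mathbb R^n$-orbits are flat totally geodesic $n$-dimensional submanifolds of $M$ (recall \ref{parallelrn}), hence of the totally geodesic factor $M_1$; if $\dim M_1 = n$ such an orbit would be open in $M_1$, forcing $M_1$ flat, a contradiction. So $\dim M_1 > n$, hence $\dim M_2 = (n+\dim K) - \dim M_1 < \dim K$, so $L' \subsetneq K$ and $M_2$ is a non-trivial compact simply connected homogeneous space; moreover $M_1 \cong (\mathbb R^n \rtimes_\rho K)/L$ with $L \subseteq K$ a compact connected subgroup of dimension $\dim M_2 \ge 1$, and the simple transitivity of $G$ on $M$ translates into $L \cap L' = \{1\}$ and $\dim L + \dim L' = \dim K$.

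The hard part is to exclude this last configuration, and that is where I expect the main obstacle to lie. The route I would take is topological: $G = \mathbb R^n \rtimes_\rho K$ is diffeomorphic to $\mathbb R^n \times K$, so $M$ is homotopy equivalent to $K$, while $M_1 \cong (\mathbb R^n\rtimes_\rho K)/L$ is homotopy equivalent to $K/L$; hence $K$ would be homotopy equivalent to $(K/L) \times (K/L')$ with $L, L' \subsetneq K$ positive-dimensional. Since $H^1(K;\mathbb Q) = H^2(K;\mathbb Q) = 0$ the same holds for the two factors, which forces $L$ and $L'$ to be semisimple; then comparing Poincaré polynomials, $P_{K/L}(t)\,P_{K/L'}(t) = P_K(t) = \prod_i(1+t^{2m_i-1})$, together with Poincaré duality and $\dim L + \dim L' = \dim K$, leads to a contradiction (one checks directly that no semisimple subgroup $L$ of the required dimension can occur). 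Alternatively, one can feed into the structure theory of \cite{DOV} and of Section \ref{aigt} the facts that $M_1$ is irreducible, non-flat, has the non-solvable presentation group $\mathbb R^n \rtimes_\rho K$, and carries through each point an $n$-plane tangent to a flat totally geodesic orbit along which the induced Killing fields are parallel, to reach the same contradiction. In either route the delicate point is precisely the exclusion of the case in which the abelian ideal $\mathbb R^n$ is contained in a single de Rham factor; everything preceding it is bookkeeping with Remark \ref{idealabeliano}, the inclusion $\mathbb R^n \subseteq \mathfrak i_1$, and the dimension estimate (which is where $\dim K < n$ is used).
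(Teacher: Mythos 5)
Your reduction is essentially the one the paper uses: project $\mathfrak g$ to each de Rham factor, use Remark \ref{idealabeliano} to see that each kernel is $\{0\}$ or $\mathbb R^n$, use Remark \ref{nois2} together with $\dim M = n+\dim K < 2n$ to conclude that exactly one factor $M_1$ has injective projection and absorbs $\mathbb R^n$, and that the product $M'$ of the remaining factors is a compact homogeneous space of (a quotient of) $K$. All of that bookkeeping is correct, and your early observation that the flat totally geodesic $n$-dimensional $\mathbb R^n$-orbits force $\dim M_1>n$ is a legitimate rearrangement of the contradiction the paper reaches at the very end.

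The gap is in the one step that actually carries the weight of the lemma: excluding the configuration $M=M_1\times M'$ with $M'$ a non-trivial compact factor. Your proposed route reduces this to the assertion that $K$ cannot be homotopy equivalent to $(K/L)\times(K/L')$ with $L,L'$ proper positive-dimensional subgroups and $\dim L+\dim L'=\dim K$, and then disposes of it with ``one checks directly that no semisimple subgroup $L$ of the required dimension can occur.'' That is precisely the hard content, and it is not checked; moreover the bookkeeping you sketch for it is shakier than you suggest (for instance $H^2(K/L;\mathbb Q)=0$ does not prevent $K/L$ from carrying even-degree cohomology --- $G_2/SU(3)=S^6$ --- so the Poincar\'e-polynomial comparison does not reduce to matching products of odd spheres, and ruling out all semisimple $L$ of complementary dimension is a classification-type analysis, not a one-line count). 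The paper avoids all of this with a structural fact you do not use: the leaf $M'$ through $e$ is the orbit of a compact subgroup $H\subset K$ acting \emph{simply transitively}, so $M'$ is a compact Lie group with a left-invariant metric, and by Ochiai--Takahashi \cite{OT} (see also \cite{Go}) the left translations $H$ form a normal subgroup of $I(M')^o$, hence of $\pi(K)$. Simplicity of $K$ then forces $H$ to be trivial or all of $K$; the first kills $M'$, and the second gives $\dim M_1=n$ and hence $M_1$ flat, a contradiction (equivalently, it is incompatible with your $\dim M_1>n$). Without this normality argument, or a complete proof of your topological claim, the proof is not finished.
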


\begin{proof} Let $\mathfrak g = \mathbb R^n \rtimes_{\mathrm {d}\rho} \mathfrak k$ be the Lie algebra of $G =\mathbb R^n\rtimes _\rho K$, let $M_i$ be an irreducible de Rham factor of $M$ and
let $\pi_i: \mathfrak g \to \mathrm {Lie}(I(M_i))$ the the natural projection. Observe that $\mathfrak J:=\ker (\pi_i)$ is an  ideal which is  properly contained in   $\mathfrak g$. Then from Remark \ref{idealabeliano} either $\mathfrak J=\{0\}$ or $\mathfrak J =\mathbb R^n$.

Assume that $\mathfrak J=\mathbb R^n$. Let us denote also by $\pi _i$ the induced Lie group morphism from $G$ into $I(M_i)^o$. Then $\pi _i (G)=\pi_i(K)$ is compact. Since $\pi_i(G)$ must act transitively on $M_i$, we conclude that $M_i$ is compact.

If $\mathfrak J =\{0\}$ then $\pi _i (\mathbb R^n)$ is an $n$-dimensional ideal of
$\pi_i(\mathfrak g)$. By Remark \ref {nois2} $\dim (\pi _i (\mathbb R^n). q)=n$ for any $q\in M_i$ and hence
$\dim (M_i) \geq n$. Since $\dim (M)= n +\dim (K) < 2n$ there is at most one of such factors with $\mathrm {ker} (\pi _i)= \{0\}$. Note that there must be one  of such factors, let us say $M_1$, since $M$ in non-compact.

Let $\mathcal D$ be the parallel foliation of $M$ associated to $M_1$. Let  $M'$ be the product of the de Rham factors $M_i$ of $M$ different from $M_1$ (and so $\mathrm {ker}(\pi_i) =\mathbb R^n$). Assume $M'$ in non-trivial.
 We identify $M'$ with the integral manifold of
$\mathcal D^\perp$ by $e = (0,1)$.  We have shown that  any irreducible de Rham factor of $M'$ is compact,  and so $M'$ is compact.  Let $\pi : G\to I(M')^o$ be the natural projection. Then clearly $\ker(\pi)=\mathbb{R}^n$ and  so, $\pi (G)= \pi (K)$ and $\pi (G)$ is locally isomorphic to $K$. Since $\mathcal D^\perp$ is $G$-invariant we have that $M'=H\cdot e$ for some  simply connected compact Lie subgroup $H$ of $K$. Observe, since $G$ acts without isotropy, that $H$ that has no isotropy and in particular acts effectively on $M'$.
 Observe that $\pi(G)$ contains the restriction to $M'$ of  $H$ (that we identify with $H$ acting by left multiplications on $M'$).
  From \cite{OT} (see also \cite{Go}) one has that
$H$ is a normal subgroup of $I(M)^o$. Then $H$  is a normal subgroup of
$\pi (G)$. Since $H\neq \{1\}$ and $K$ is simple, we conclude that  $H\simeq \pi (G)$. Then $\dim (M') = \dim (K)$ and so $\dim (M_1) = n$. Observe, since
$\pi (\mathbb R^n)=0$, that the Killing fields induced by the elements in
$\mathrm {Lie}(\mathbb R^n)  \simeq \mathbb R^n\subset \mathfrak g$ lie in the distribution $\mathcal D$. Then, by Remark \ref{nois2}, $\mathbb R^n$ acts transitively (by isometries) on $M_1$. Then $M_1$ is flat. A contradiction that proves the $M'$ is trivial.
\end{proof}

\begin {prop}\label{mex}
Let $K$ be a simply connected compact simple Lie group and let
$\rho : K\to SO_n$ be an irreducible orthogonal representation. Let $\mathbb V_0$ be a non-trivial vector subspace of $\mathbb R^n$ such that $\dim (\mathbb V_0)(1 +\dim (K))< n$. Then there exists a left invariant metric $\langle \, ,\, \rangle$ on $G=\mathbb R^n \rtimes_{\rho} K$
such that $M = (G,\langle \, ,\, \rangle)$ is an irreducible Riemannian manifold and the nullity distribution $\nu$ of $M$ at $(0,e)$ contains $\mathbb V_0$ (hence the index of nullity of $M$ is at least
$\dim (\mathbb V_0)$).
\end{prop}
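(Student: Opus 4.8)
The plan is to construct the metric explicitly by choosing an orthonormal-type decomposition of $\mathfrak g=\mathbb R^n\oplus\mathfrak k$ adapted to $\mathbb V_0$, and then to force $\mathbb V_0$ into the nullity by a dimension count on the curvature. First I would keep the standard Euclidean metric on $\mathbb R^n$, keep $\mathbb R^n\perp\mathfrak k$, and choose \emph{some} left-invariant inner product on $\mathfrak k$; the only real freedom I need is in how large to make $\mathfrak k$ relative to the ``orbit'' of $\mathbb V_0$. The key subspace is $W:=\mathbb V_0+\mathrm{d}\rho_e(\mathfrak k)\mathbb V_0\subset\mathbb R^n$, whose dimension is at most $\dim(\mathbb V_0)(1+\dim K)$, hence (by hypothesis) a proper subspace of $\mathbb R^n$. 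I would then use the Koszul-type formula (\ref{feq}) together with $[v,w]=0$ for $v,w\in\mathbb R^n$ to compute $\nabla_{\tilde v}$ for $v\in\mathbb V_0$ acting on the frame $\mathbb R^n\oplus\mathfrak k$, and show that $\nabla_{\tilde v}$ lands inside $\widetilde W$ (the left-invariant fields through $W$). The point is that $\nabla_{\tilde v}\tilde w=0$ for $w\in\mathbb R^n$ by (\ref{parallelrn}), and $\nabla_{\tilde v}\tilde X$ for $X\in\mathfrak k$ is governed by brackets $[v,X]=-\mathrm d\rho_e(X)v\in W$ and $[X,w]$-terms that pair nontrivially only with elements of $W$; choosing $\mathbb R^n$ orthogonal to $\mathfrak k$ kills the cross terms that would otherwise leak out of $W$.

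Next I would feed this into the curvature formula. Using $R_{X,Y}Z=\nabla_{[X,Y]}Z-[\nabla_X,\nabla_Y]Z$ for left-invariant fields, and the fact that for $v\in\mathbb V_0$ both $[v,\cdot]$ (restricted appropriately) and $\nabla_{\tilde v}$ preserve the flag $\mathbb V_0\subset W\subset\mathbb R^n$ while $\nabla_{\tilde v}\tilde w=0$ on all of $\mathbb R^n$, I want to conclude $R_{\tilde v,\,\cdot}\,\cdot=0$, i.e.\ $\tilde v_{(0,e)}\in\nu_{(0,e)}$. Concretely: for any left-invariant $X$, $\nabla_X\tilde v$ can be read off from (\ref{feq}) and the skew-symmetry of $\nabla_X$, and one shows $R_{\tilde v, X}Y=0$ by checking it against an orthonormal frame, repeatedly using (\ref{parallelrn}) and the orthogonality $\mathbb R^n\perp\mathfrak k$ to annihilate terms. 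This is the computational heart; it should reduce, after the adapted choice of frame, to a finite check that every surviving term contains a factor $\nabla_{\tilde v}\tilde w$ with $w\in\mathbb R^n$, which vanishes. Hence $\mathbb V_0\subset\nu_{(0,e)}$, giving the index-of-nullity bound.

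Finally I would verify irreducibility. If $M$ had a Euclidean de Rham factor, Lemma \ref{nse} would force $M\cong\mathbb R^n\times K$ as a Riemannian product, and then the nullity would be exactly the parallel $\mathbb R^n$-factor — but one can arrange the left-invariant metric on $\mathfrak k$ so that $\mathcal D$ is \emph{not} parallel (equivalently, so that $\nabla_{\tilde v}\tilde X\neq0$ for some $v\in\mathbb V_0$, $X\in\mathfrak k$), contradicting Lemma \ref{nse}; so $M$ has no Euclidean de Rham factor. Then Lemma \ref{nse2} (whose hypotheses $\dim K<n$ and ``no Euclidean factor'' are now met, the former since $\dim(\mathbb V_0)\ge1$ forces $1+\dim K<n$) gives that $M$ is irreducible. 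The main obstacle I anticipate is the bookkeeping in the curvature computation — making sure the adapted decomposition genuinely confines $\nabla_{\tilde v}$ to $\widetilde W$ and that no curvature term escapes — together with confirming that one \emph{can} choose the metric on $\mathfrak k$ to simultaneously make $\mathcal D$ non-parallel (needed for the de Rham step) without disturbing the nullity computation (which only used the Euclidean metric on $\mathbb R^n$ and orthogonality $\mathbb R^n\perp\mathfrak k$, both of which are preserved under any choice on $\mathfrak k$).
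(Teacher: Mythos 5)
Your plan founders on a single but decisive point: you insist on keeping $\mathbb R^n\perp\mathfrak k$ together with the canonical metric on $\mathbb R^n$. With that choice the Koszul formula (\ref{feq}) gives $\langle\nabla_{\tilde Y}\tilde u,\tilde Z\rangle=0$ for every $u\in\mathbb R^n$, $Z\in\mathfrak k$ and every $Y\in\mathfrak g$ (each of the three terms is an inner product of an element of $\mathbb R^n$ with an element of $\mathfrak k$, or vanishes by skew-symmetry of $\mathrm d\rho$), so the distribution $\mathcal D$ is parallel \emph{for any choice of inner product on $\mathfrak k$}. Hence $M$ splits as the Riemannian product of the flat $\mathbb R^n$ and $K$, it has a Euclidean de Rham factor, and the irreducibility conclusion fails. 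Your closing hope --- that one can tune the metric on $\mathfrak k$ alone to make $\mathcal D$ non-parallel while preserving $\mathbb R^n\perp\mathfrak k$ --- is exactly what cannot be done; the parallelism of $\mathcal D$ never sees the $\mathfrak k$-block of the metric. (In the product situation your nullity claim is true but vacuous: the nullity contains all of $\mathbb R^n$, which is precisely the ``trivial nullity'' case the paper is trying to avoid.)

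The idea you are missing is the one the paper's proof is built around: the metric must be perturbed \emph{off} the product metric by tilting a direction of $\mathbb R^n$ towards $\mathfrak k$, and the dimension hypothesis is what creates room to do this without destroying the nullity. Concretely, the paper sets $\mathbb V_{i}=[\mathfrak k,\mathbb V_{i-1}]+\mathbb V_{i-1}$ (your $W$ is $\mathbb V_1$), lets $d\ge 2$ be minimal with $\mathbb V_d=\mathbb R^n$ (here $\dim(\mathbb V_0)(1+\dim K)<n$ is used to get $\mathbb V_1\neq\mathbb R^n$), and then chooses $v'\in\mathbb V_{d-1}^\perp$, the projection of some $[w,v]$ with $v\in\mathbb V_{d-1}$, $w\in\mathfrak k$, and declares $\langle v',w\rangle=a$, $\langle v',z\rangle=b$ with $a,b$ small and generic while keeping everything else orthogonal. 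Because $v'$ sits in $\mathbb V_{d-1}^\perp$, the Koszul formula still shows that every element of $\mathbb V_{d-2}\supset\mathbb V_0$ (and of $[\mathfrak g,\mathbb V_{d-2}]\subset\mathbb V_{d-1}$) is a transvection at $e$, so (\ref{feqR}) yields $\mathbb V_0\subset\mathbb V_{d-2}.e\subset\nu_e$; but the same formula shows $2\langle\nabla_{w.e}v,z.e\rangle=b+\lambda a\neq 0$, so some $v\in\mathbb V_{d-1}$ fails to be a transvection, which is what feeds Lemma \ref{nse} to exclude a Euclidean factor and then Lemma \ref{nse2} to get irreducibility. Without this non-orthogonal perturbation, both halves of the conclusion you need (irreducibility and non-triviality of the nullity) collapse simultaneously.
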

\begin{proof}

Let us define, for $i\geq 1$,
$$\mathbb V_{i} = [\mathfrak g, \mathbb V_{i-1}] +\mathbb V_{i-1}
=  [\mathfrak k, \mathbb V_{i-1}] +\mathbb V_{i-1}.$$
Since $[\mathfrak k,\mathbb V_i]\subset \mathbb V_{i+1}$ and $K$ acts irreducibly on $\mathbb R^n$ there exists $d\in \mathbb N$ such that
$\mathbb V_{d}=\mathbb R^n$. We choose $d$  to be the minimal with this property. Observe that $\dim (\mathbb V_1)\leq    \dim(\mathbb V_0)\dim (K) + \dim(\mathbb V_0)
= \dim (\mathbb V_0)(1+\dim (K))<n$. So $\mathbb V_1$ is a proper subspace of $\mathbb R^n$  and thus $d\geq 2$. Let $e= (0,1)\in G$, let $(\, , \, )$ be the canonical inner product of $\mathbb R^n$ and let $\langle \, , \,\rangle '$ be an inner product in $\mathfrak k \simeq \mathfrak k.e$. Let $w\in \mathfrak k$ and $v\in \mathbb V_{d-1}$ be such that $ [w,v]\notin \mathbb V_{d-1}$. Let $0\neq z\in \mathfrak k$ be perpendicular to  $w$.  Let $v'$ be the orthogonal projection of $[w,v]$ into the orthogonal complement $\mathbb V_{d-1}^\perp$ of $\mathbb V_{d-1}$ in $\mathbb V_d = \mathbb R^n$ (any element of $\mathbb R^n\subset \mathfrak g$ defines a Killing field which is identified with its value at $e$). Let $\bar {\mathbb V}$ be the orthogonal complement of $v'$ in $\mathbb V_{d-1}^\perp$. So $$T_eG=\mathbb V_{d-1}\oplus \overline{\mathbb V}\oplus \mathbb R v'\oplus \mathfrak k.$$
Let us consider the inner product $ \langle \, ,\, \rangle $ on $T_eG $ defined by:

1) $ \langle \, ,\, \rangle  =
(\, , \, )\times \langle \, , \,\rangle '$
when restricted to  $\big( \mathbb V_{d-1}\oplus \bar {\mathbb V}\big) \oplus \mathfrak k$.

2) $ \langle \,  , \, \rangle  = (\, , \, )$ when restricted to $\mathbb R^n\subset T_eG$.

3) $v'$ is perpendicular to the orthogonal complement of the linear span of
$w,\, z$ in $\mathfrak k$ and
$ \langle v' , w\rangle  = a$,
 $ \langle v' , z \rangle  = b$,
 where
$a, b$ are small generic constants, that will be fixed later, and such that
 $ \langle \, , \, \rangle $
is a positive definite inner product of $T_eG$.

From formula (\ref {feq}) one obtains that the elements of $\mathbb V_{d-2}$ define transvections at $e$ (by making use that the bracket by an element of $\mathfrak k$ defines a skew symmetric transformation of $\mathbb R^n$).

Let us write $[v, z] = \lambda v' + u$, where $u\perp  v'$.
Identifying $x\in \mathfrak g$ with the associated Killing field $q\mapsto x.q$ we have that
\begin {equation}
\begin{split}
2\langle \nabla _{w.e}v , z.e\rangle &= \langle [w,v].e,z.e \rangle
+ \langle [w,z].e,v.e \rangle + \langle [v, z].e, w.e \rangle\\
&= \langle [w,v].e,z.e \rangle +
 \langle [v, z].e, w.e \rangle = b + \lambda a
 \end{split}
 \end {equation}
 (observe that $[v,w]\in \mathfrak k$ and $v\in \mathbb{V}_{d-1}$ are perpendicular).
One can choose the   generic constants $a$ and $b$ such that $b+\lambda a\neq 0$. Then the Killing field
 $q\mapsto v.q$ is not a transvection at $e$. Then, by Lemma \ref {nse},  $M$ has no Euclidean factor. Hence, by Lemma \ref {nse2}, $M$ is an irreducible Riemannian manifolds.
 From formula (\ref {feqR}) we obtain that
  $\mathbb V_{d-2}. e \subset \nu _e$ since
the elements of both $\mathbb V_{d-2}$ and
$[\mathfrak g,\mathbb V_{d-2}]\subset \mathbb V_{d-1}$ induce transvections.
\end{proof}

\

\begin{proof}[Proof of Theorem \ref{mex2}]
If follows directly from Proposition \ref{mex}.
\end{proof}





\vspace{1cm}

\noindent
\begin{tabular}{l|cl|cl}
Antonio J. Di Scala & & Carlos E. Olmos  & & Francisco Vittone\\
\footnotesize Dipartimento di Scienze Matematiche&  & \footnotesize FaMAF, CIEM-Conicet & &\footnotesize DM, ECEN, FCEIA - Conicet\\
\footnotesize Politecnico di Torino &  &\footnotesize Universidad Nac. de C\'ordoba& &\footnotesize Universidad Nac. de Rosario \\
 \footnotesize Corso Duca degli Abruzzi, 24& &\footnotesize Ciudad Universitaria  & &\footnotesize Av. Pellegrini 250\\
 \footnotesize 10129, Torino, Italy  & & \footnotesize 5000, C\'ordoba, Argentina & &\footnotesize 2000, Rosario, Argentina \\
\footnotesize{antonio.discala@polito.it} & & \footnotesize{olmos@famaf.unr.edu.ar} & & \footnotesize{vittone@fceia.unr.edu.ar}\\
\footnotesize{http://calvino.polito.it/$\sim$adiscala}& &
 & &\footnotesize{www.fceia.unr.edu.ar/$\sim$vittone}
\end{tabular}


\begin{thebibliography}{BCO}



\bibitem [BCO]{BCO}
Berndt, J., Console, S., and  Olmos, C,
\textit {Submanifolds and holonomy},
Research Notes in Mathematics  Chapman \& Hall/CRC, Boca
Raton FL , Second Edition 2016.

\bibitem [BOR]{BOR}
Berndt, J., Olmos, C.,and S. Reggiani,
\textit  {Compact homogeneous Riemannian manifolds with low co-index of
symmetry}, J. Eur. Math. Soc. (JEMS), {\bf 19} (2017), 221--254.


\bibitem [CO]{CO}  Console, S.,  Olmos, C.,
\textit {Curvature invariants, Killing vector fields and cohomogeneity},
Proc. AMS {\bf 137} (2009), 1069--1072.

\bibitem [DOV]{DOV} Di Scala, A. J.,Olmos, C., and Vittone, F.,
\textit  {Homogeneous Riemannian manifolds with non-trivial nullity},
Transformation Groups, {\bf 17} (2022), 31-72

\bibitem [Go] {Go} Gordon, C.,  \textit {Riemannian Isometry Groups Containing
Transitive Reductive Subgroups}, Math, Ann. {\bf 248}  (1980), 185--192.

\bibitem[GG]{GG} Gorodski, C. and Guimar\~aes, F.  \textit{The $\kappa $-nullity of Riemannian manifolds and their splitting tensors.}  arXiv preprint arXiv:2204.04551 (2022).
\bibitem [ORT]{ORT} Olmos, C., Reggiani,  and S., Tamaru, H., \textit {The index of symmetry of compact naturally
reductive spaces},  Math. Z. {\bf 277} (2014),  611--628.

\bibitem [OS]{OS} Olmos, C., and  Salvai, M. \textit {Holonomy of homogeneous vector bundles and polar representations}, Indiana Univ. Math. J. {\bf 44} (1995), 1007--1015.

\bibitem [OT]{OT} Ochiai, T., and Takahashi, T., {The Group of Isometries of a Left Invariant
Riemannian Metric on a Lie Group}, Math. Ann. {\bf 223} (1976), 91--96.

\end{thebibliography}
\end{document}